\documentclass[11pt]{amsart}
\usepackage[margin=1in]{geometry}

\usepackage[utf8]{inputenc}									
\usepackage{amsmath}
\usepackage{amssymb, amsfonts}
\usepackage{amsthm}
\usepackage{hyperref}
\usepackage{mathtools}
\usepackage{tikz}
\usepackage{enumitem}
\usepackage{makecell}%To keep spacing of text in tables
\setcellgapes{4pt}%parameter for the spacing

\newtheorem{thm}{Theorem}
\newtheorem{cor}[thm]{Corollary}
\newtheorem{lem}[thm]{Lemma}
\newtheorem{prop}[thm]{Proposition}

\theoremstyle{definition}
\newtheorem{defn}[thm]{Definition}
\newtheorem{ex}[thm]{Example}
\newtheorem{rem}[thm]{Remark}
\newtheorem{nota}[thm]{Notation}

\usepackage{soul}
\usepackage{tikz-cd}

\title{The Bernardi formula for non-transitive deformations of the braid arrangement}
\author{Ankit Bisain}
\address{MIT PRIMES, Department of Mathematics, Massachusetts Institute of Technology, Cambridge MA 02142, USA}
\email{ankit.bisain@gmail.com}
\author{Eric J. Hanson}\thanks{This is the final manuscript of an open-source paper published in {\it The Electronic Journal of Combinatorics} {\bf 28} (2021), no. 4. DOI:\href{https://doi.org/10.37236/10233}{10.37236/10233}.}
\address{Department of Mathematics, Brandeis University, Waltham MA 02453, USA}\email{ehanson4@brandeis.edu}

\subjclass[2020]{05A19}
\date{October 8, 2021}

\newcommand{\cadet}{\mathsf{cadet}}
\newcommand{\lsib}{\mathsf{lsib}}
\newcommand{\rsib}{\mathsf{rsib}}

\newcommand{\parent}{\mathsf{parent}}
\newcommand{\child}{\mathsf{child}}

\renewcommand{\S}{\mathbf{S}}
\newcommand{\T}{\mathcal{T}}

\begin{document}

\maketitle

\begin{abstract}
    Bernardi has given a general formula for the number of regions of a deformation of the braid arrangement as a signed sum over \emph{boxed trees}. We prove that each set of boxed trees which share an underlying (rooted labeled plane) tree contributes 0 or $\pm 1$ to this sum, and we give an algorithm for computing this value. For Ish-type arrangements, we further construct a sign-reversing involution which reduces Bernardi's signed sum to the enumeration of a set of (rooted labeled plane) trees. We conclude by explicitly enumerating the trees corresponding to the regions of Ish-type arrangements which are nested, recovering their known counting formula.
\end{abstract}

\section{Introduction}\label{sec:intro}

Hyperplane arrangements, or collections of codimension-1 subspaces of a vector space (typically $\mathbb{R}^n$) appear in many different mathematical contexts. For example, the elements of any finite Weyl group $W$ can be interpreted as (compositions of) reflections across certain hyperplanes in $\mathbb{R}^n$. The resulting ``Coxeter arrangement'' then provides a geometric interpretation of the so-called ``weak order'' on $W$. This hyperplane arrangement also reappears in the representation theory of finite-dimensional algebras as the ``stability diagram'' of a ``preprojective algebra''. See \cite{thomas}.

A common problem in enumerative combinatorics is to count the connected components (called \emph{regions}) of the complement of a hyperplane arrangement.  In 1975, Zaslavsky showed that this can be done by plugging -1 into the characteristic polynomial of the arrangement \cite{zaslavsky}. This has helped motivate a large body of work aimed at computing the chacteristic polynomials of hyperplane arrangements. See e.g. \cite{stanley_supersolvable,terao,athanasiadis}. Another method for counting the regions of a hyperplane arrangement is to establish a bijection with some well-known set of combinatorial objects. This approach has the advantage of offering a somewhat direct explanation as to why two sets of objects are enumerated by the same formula.
    
In a main result of \cite{bernardi}, Bernardi gives a general formula for the number of regions of a certain type of hyperplane arrangement, called a \emph{deformation of the braid arrangement}. This formula is a signed sum over so called \emph{boxed trees}, which are partitions of the nodes of certain rooted labeled plane trees. (See Definition \ref{def:boxed} below for the precise definition.) Under a condition known as \emph{transitivity} (Definition \ref{def:transitive}), Bernardi further shows that the signed sum reduces to an enumeration, thereby recapturing and generalizing several known counting formulas. The aim of this paper is to reduce Bernardi's formula for arrangements which are not transitive. Our main result is an algorithm for computing the restriction of Bernardi's sum to the set of boxed trees sharing an underlying (rooted labeled plane) tree (see Theorem \ref{thm:characterization}). We then reduce the signed sum to an enumeration for the class of \textit{Ish-type} arrangements (Definition \ref{def:Ish}). This class notably contains the \emph{Ish arrangement}, which is known to have its regions enumerated by the Cayley numbers (see \cite{AR} and the discussion in Section \ref{sec:background}).
    
    \subsection{Organization and Main Results}\label{sec:organization}
    This paper is organized as follows: In Section \ref{sec:background}, we review the relevant background and definitions. In Section \ref{sec:treeContributions}, we study the contribution of all \emph{$\S$-boxings} (Definition \ref{def:boxed}) of a given (rooted labeled plane) tree to the signed sum of Bernardi and prove our first main theorem:
    \begin{thm}[Theorem \ref{thm:characterization}, simplified version]\label{thmA}
       For an arbitrary deformation of the braid arrangement, the contribution of the set of boxed trees sharing an underlying (rooted labeled plane) tree to the formula of Bernardi is 0 or $\pm 1$. Moreover, there exists an algorithm for computing this contribution.
    \end{thm}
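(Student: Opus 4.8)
The plan is to prove that the signed sum over all $\S$-boxings sharing a fixed underlying tree $T$ factors as a product of independent local contributions, one for each ``gap'' between consecutive nodes in the canonical traversal order of $T$, and that every such local contribution lies in $\{-1,0,+1\}$. First I would fix the linear order $v_1,\dots,v_n$ on the nodes of $T$ determined by its planar structure (via the $\parent$, $\child$, $\lsib$, and $\rsib$ relations), and observe that an $\S$-boxing of $T$ is exactly a partition of this sequence into consecutive blocks (the boxes) subject to the validity constraints of Definition \ref{def:boxed}. Equivalently, a boxing is a choice, for each of the $n-1$ gaps, of either a \emph{cut} (the two neighbouring nodes lie in different boxes) or a \emph{non-cut} (they lie in the same box).

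The two ingredients I need are that both the sign and the validity of a boxing factor over these gaps. For the sign: Bernardi's sign of an $\S$-boxing depends, up to a global factor fixed by $T$, only on its number of boxes, so inserting or deleting a single cut flips the sign; hence the sign can be written as a product $\prod_i \varepsilon_i$ of per-gap signs in which the cut and non-cut options at each gap carry \emph{opposite} signs. For the validity, the crucial step is to show that the $\S$-boxing condition is \emph{local}: that it is equivalent to a conjunction of constraints, one per gap, each depending only on the labels of the two neighbouring nodes $v_i,v_{i+1}$ and on $\S$. Concretely, I would verify that ``cut at gap $i$'' and ``non-cut at gap $i$'' are each governed by an explicit pairwise constraint read off from $\S$, so that a boxing is $\S$-valid precisely when every gap-decision independently satisfies its own constraint.

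Granting these two facts, the signed sum distributes over the independent gap-decisions:
\begin{equation*}
\sum_{\S\text{-boxings of } T} \mathrm{sign} \;=\; \prod_{i=1}^{n-1}\left(\sum_{\text{valid decisions at gap } i}\varepsilon_i\right).
\end{equation*}
At each gap there are at most two decisions, and at least one (splitting into singletons is always permitted) is always valid, so each inner sum is nonempty; because the two decisions carry opposite signs, the inner sum equals $0$ when both are valid and $\pm 1$ when exactly one is. Hence each factor lies in $\{-1,0,+1\}$ and the whole product lies in $\{0,\pm 1\}$, as claimed. The algorithm is then immediate: traverse $T$, compute the factor at each gap by testing the two pairwise constraints against $\S$, return $0$ as soon as a gap admitting both decisions is found, and otherwise return the product of the forced signs; this runs in time polynomial in the size of $T$ and $\S$.

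The main obstacle I anticipate is establishing the locality of the $\S$-boxing condition, namely that the constraint defining a valid box reduces to a conjunction of pairwise constraints on consecutive nodes with no genuinely global dependence on the box as a whole. Once the condition is localized and the sign is shown to alternate with the cut/non-cut choice at each gap, the factorization and the $\{0,\pm 1\}$ conclusion follow formally; the remaining work is the bookkeeping needed to extract the explicit pairwise constraints from Definition \ref{def:boxed} and to confirm the opposite-sign property at every gap.
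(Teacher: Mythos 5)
There is a genuine gap, and it is exactly at the point you flag as the ``main obstacle'': the locality claim is false, and not merely hard to establish. By Definition~\ref{def:S-cadet}, a box $(v_i,\dots,v_j)$ is an $\mathbf{S}$-cadet sequence only if $\sum_{p=i'+1}^{j'}\lsib(v_p)\notin S^-_{v_{i'},v_{j'}}$ for \emph{every} pair $i'<j'$ of positions in the box, not just consecutive ones; the constraint couples the labels of arbitrarily far-apart nodes through the sum of $\lsib$ over all intermediate nodes. Precisely when $\mathbf{S}$ fails to be transitive (Definition~\ref{def:transitive}), the pairwise consecutive constraints can all hold while a long-range constraint fails, so the validity of a boxing does not decompose as a conjunction of per-gap conditions. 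The paper's own Example~\ref{ex:contributions} refutes your factorization quantitatively: there $S^-_{4,5}=S^-_{5,1}=\{0\}$ and $S^-_{4,1}=\{0,1,2,3,4,5\}$, so $(4,5)$ and $(5,1)$ are valid boxes but $(4,5,1)$ is not. Both gaps of the chain $(4,5,1)$ admit both a cut and a non-cut, so your formula would return $0\cdot 0=0$, whereas the true contribution is $(-1)^{3-3}+(-1)^{3-2}+(-1)^{3-2}=-1$. Indeed, contributions of $-1$ arise \emph{only} through this non-locality (Remark~\ref{rem:transitiveOnly}); in the transitive case, where something like your locality does hold, Bernardi already showed the contribution is $0$ or $1$, and the entire difficulty of this paper is the non-transitive case your argument assumes away.

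Structurally, your first reduction (restricting to chains under the cadet relation) is sound and matches the paper's Lemma~\ref{lem:contributionProduct}, and your observation that adding or removing a cut flips the sign is also correct and is used repeatedly in the paper's involutions. But after that the paper must work much harder: it further decomposes maximal cadet sequences into $\mathbf{S}$-connected pieces (Lemma~\ref{lem:contributionProduct2}), cancels most boxings via sign-reversing involutions built from the ``precedes'' relation, identifies the surviving boxings with chains of overlapping maximal $\mathbf{S}$-cadet sequences under the ``reaches'' relation of Definition~\ref{def:reaches}, and finally cancels all but at most one such chain by a second involution; the greedy algorithm of Theorem~\ref{thm:characterization} either constructs that unique survivor (contribution $(-1)^{k-t}$) or fails (contribution $0$). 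Any repair of your approach would have to replace the per-gap product with machinery of this kind that tracks how maximal $\mathbf{S}$-cadet sequences overlap, since that overlap structure, not any gap-by-gap data, is what determines the contribution.
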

    We believe the algorithm in Theorem~\ref{thmA} can be implemented in polynomial time. See Remark~\ref{rem:complexity} for further discussion.
    We also note that trees with a contribution of $-1$ do not appear in the transitive case, but we suspect that they are ubiquitous in the non-transitive case. See Remark \ref{rem:transitiveOnly}.
    
   In Section \ref{sec:Ish}, we define \emph{almost transitive} arrangements (Definition \ref{def:Almost Transitive}) and specialize the result of Theorem \ref{thm:characterization} to these arrangements (Proposition \ref{lem:IshContribution}). In Section \ref{sec:IshParams}, we further restrict to Ish-type arrangements (Definition \ref{def:Ish}). In this case, we consider four parameters for every tree contributing $\pm 1$ to the signed sum: the \emph{lower 1-length}, \emph{lower inefficiency}, \emph{upper 1-length}, and \emph{upper inefficiency} (Definitions \ref{not:ell} and \ref{def:inefficient}). We then construct a series sign-reversing involutions allowing us to prove our second main theorem:
   
    \begin{thm}[Theorem \ref{thm:involution}]\label{thmB}
    The regions of an Ish-type arrangement are equinumerous with the corresponding rooted labeled plane trees with lower 1-length, lower inefficiency, upper 1-length, and upper inefficiency all 0.
	\end{thm}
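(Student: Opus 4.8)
The plan is to start from the reduction already carried out in the earlier sections and then to peel off the four statistics one at a time by means of sign-reversing involutions. Concretely, Proposition~\ref{lem:IshContribution} specializes Bernardi's signed sum, for an Ish-type arrangement, to a sum $\sum_T c(T)$ in which $T$ ranges over the relevant rooted labeled plane trees and each contribution $c(T)\in\{0,+1,-1\}$ is computed by the algorithm of Theorem~\ref{thm:characterization}. Since the number of regions equals Bernardi's signed sum, which after this reduction equals $\sum_T c(T)$, the goal is to establish
\begin{equation*}
\sum_T c(T) = \#\bigl\{T : \text{all four parameters of } T \text{ vanish}\bigr\}.
\end{equation*}
The first task is to read off from Proposition~\ref{lem:IshContribution} an explicit description of the sign $c(T)$ of a nonzero contribution in terms of the lower/upper $1$-length and lower/upper inefficiency of Definitions~\ref{not:ell} and~\ref{def:inefficient}. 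I expect this to show that a tree with all four parameters equal to $0$ contributes $+1$, so that the right-hand side is precisely the sum of the surviving $+1$'s once every other tree has been cancelled.

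The cancellation itself will be produced by a sequence of four sign-reversing involutions, one per parameter. I would first construct an involution $\phi_1$ defined on the trees with $c(T)=\pm1$ and nonzero lower $1$-length, realized by a local surgery on the tree (reattaching or relocating the node canonically responsible for the first unit of lower $1$-length) that is manifestly its own inverse and that flips the sign $c(T)$; its fixed-point set is exactly the trees of lower $1$-length $0$. Restricting to this fixed-point set, I would then build $\phi_2$ to cancel the trees of nonzero lower inefficiency, then $\phi_3$ for upper $1$-length on the survivors, and finally $\phi_4$ for upper inefficiency. After all four steps only the trees with all four parameters equal to $0$ remain, each contributing $+1$, which yields the claimed equinumerosity.

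The delicate points are (i) checking that each $\phi_i$ is genuinely an involution and genuinely reverses the sign, which requires that the local modification changes the parity governing $c(T)$ by exactly one and is where the explicit sign formula from Proposition~\ref{lem:IshContribution} is used, and (ii) ensuring that the involutions compose, i.e.\ that $\phi_i$ preserves the vanishing of the statistics already eliminated by $\phi_1,\dots,\phi_{i-1}$. The main obstacle I anticipate is exactly this compatibility: the surgery witnessing one involution must be chosen so that it leaves the earlier statistics untouched, or at least stays within their zero locus, and in an Ish-type arrangement the lower and upper parameters interact, so the order in which they are eliminated and the precise shape of each move must be calibrated carefully. I would address this by defining each move as locally as possible, altering a single critical node or edge identified canonically by the corresponding statistic, and then verifying by case analysis that it is sign-reversing and parameter-preserving; the bookkeeping for the interaction between the ``lower'' and ``upper'' parameters is the part I expect to be the most technical.
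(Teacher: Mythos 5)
Your high-level strategy (reduce via Proposition~\ref{lem:IshContribution}, then cancel with sign-reversing involutions) is the same as the paper's, but the specific decomposition into \emph{four} involutions, one per parameter, cannot work, and the obstruction is not the ``bookkeeping'' you flag at the end --- it is already visible in the sign formula. By Proposition~\ref{lem:IshContribution}, a tree with nonzero contribution contributes $(-1)^{L_l(T)+L_u(T)}$: the sign depends only on the two $1$-lengths and is completely blind to the two inefficiencies. Consequently your $\phi_2$ cannot exist: among the trees with $L_l=0$ (the intended survivors of $\phi_1$), all trees with a fixed upper $1$-length have the same sign regardless of their lower inefficiency, so any involution that cancels the set $\{E_l\neq 0\}$ while ``leaving the earlier statistics untouched'' would pair trees of \emph{equal} sign. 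Your $\phi_1$ fails for the same underlying reason: the signed sum over the trees with $L_l\neq 0$ is not zero in general, so no sign-reversing involution can cancel that set internally. Using the paper's bijections (Lemmas~\ref{lem:invo1}--\ref{lem:invo4}) one computes that this signed sum telescopes to $-|\mathbf{S}(0,1,0,0)|$, and $\mathbf{S}(0,1,0,0)\neq\emptyset$ already for the Ish arrangement with $n=4$: take the cadet chain $4\to 2\to 1$ with $\lsib(2)=1$ and $\lsib(1)=2$, and attach the node $3$ as the leftmost child of $4$; then $\{4,2\}$ and $\{2,1\}$ are maximal $\mathbf{S}$-cadet sequences while $\{4,2,1\}$ is not (since $1+2\in S_{4,1}^-$), giving $L_l=1$ and $E_l=L_u=E_u=0$.

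The missing idea is that each $1$-length must be cancelled \emph{jointly} with the corresponding inefficiency, by an involution that trades one for the other. This is exactly what the paper does: the map $\phi_l$ converts a tree with $E_l=0$ and $L_l=\ell\geq 1$ into a tree with $L_l=\ell-1$ and $E_l>0$ (the node removed from the maximal $\mathbf{S}$-cadet sequence below $1$ is reattached so as to become a lower inefficient node), and $\psi_l^0$ inverts this; the involution $\omega_l$ (apply $\phi_l$ when $E_l=0$, else $\psi_l^0$) then cancels the whole set $\{E_l+L_l\neq 0\}$, and it is sign-reversing precisely because $\phi_l$ and $\psi_l^0$ change $L_l$ by exactly $1$ while preserving both upper parameters. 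A second involution $\omega_u$ of the same shape handles the upper pair. So the correct grouping is two involutions (lower pair, upper pair), not four, and the cross-compatibility you worried about is resolved exactly because each of these two involutions fixes both parameters of the other pair.
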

	
	In Section \ref{sec:enumeration}, we prove our final main theorem, showing explicitly for any \emph{nested Ish arrangement} (Definition \ref{def:Ish}) that our sign-reversing involutions reduce the Bernardi formula to the known counting formula of \cite{AST}.
	
	\begin{thm}[Theorem \ref{thm:Cayley}]\label{thmC}
	Let $\mathcal{A}_\mathbf{S}$ be a nested Ish arrangement in $\mathbb{R}^n$. For $2 \leq j \leq n$, let $S_{1,k}$ be the set of hyperplanes in $\mathcal{A}_\mathbf{S}$ of the form $x_1 - x_k = s$ for some $s \in \mathbb{R}$. Then the number of regions of $\mathcal{A}_\mathbf{S}$ is given by
	\begin{equation}\label{countIsh}
	r_{\mathbf{S}} = \prod_{k = 2}^n (n + 1 + |S_{1,k}| - k).
	\end{equation}
	\end{thm}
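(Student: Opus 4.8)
The plan is to invoke Theorem~\ref{thm:involution}, which identifies $r_{\mathbf{S}}$ with the number of rooted labeled plane trees whose lower $1$-length, lower inefficiency, upper $1$-length, and upper inefficiency (Definitions~\ref{not:ell} and~\ref{def:inefficient}) all vanish. The entire problem then becomes the explicit enumeration of this restricted family of trees, and the theorem asserts that this count factors as in \eqref{countIsh}. Because the right-hand side of \eqref{countIsh} is a product indexed by $k = 2, \dots, n$, the natural route is to construct each admissible tree by inserting its vertices one at a time and to show that the step inserting vertex $k$ admits exactly $n + 1 + |S_{1,k}| - k$ valid choices, \emph{independently} of the choices made at the other steps; the product formula is then immediate.

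Concretely, I would process the labels in decreasing order $n, n-1, \dots, 2$, maintaining at each stage a partial tree on the already-inserted vertices together with the root and the distinguished vertex $1$. The first substantive step is to translate the four vanishing conditions, which are defined globally on the whole tree, into local constraints on how each vertex $k$ sits relative to vertex $1$ and the root. Here I expect the nesting hypothesis defining a nested Ish arrangement (Definition~\ref{def:Ish}) to be essential: it should force these statistics to decompose into independent per-vertex requirements. Granting this decomposition, I would count the attachment sites for vertex $k$ in a fixed partial tree at which all four parameters remain zero. The ``generic'' sites, unconstrained by the special coordinate $x_1$, should account for the $n + 1 - k$ term (the number of available positions in the current partial tree), while each of the $|S_{1,k}|$ hyperplanes of the form $x_1 - x_k = s$ should open exactly one further admissible site attached along the path to vertex $1$. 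Checking that these two families of sites are disjoint and total exactly $n + 1 + |S_{1,k}| - k$, regardless of the current partial tree, is the heart of the computation.

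The main obstacle is precisely this decoupling claim. A priori the admissibility of inserting vertex $k$ could depend on the relative arrangement of the other labels already present, so the per-step count need not be constant. I would resolve this by computing how each of the four statistics changes under a single leaf insertion and showing, using the nested structure, that each increment depends only on the local site chosen for $k$ and never retroactively violates a condition secured at an earlier step; the monotonicity built into ``nested'' is what I expect to guarantee this. Once the per-step count is pinned down as $n + 1 + |S_{1,k}| - k$, taking the product over $k$ yields \eqref{countIsh}. As a consistency check, the classical Ish arrangement has $|S_{1,k}| = k$, so every factor equals $n+1$ and $r_{\mathbf{S}} = (n+1)^{n-1}$ recovers the Cayley number, in agreement with \cite{AST}.
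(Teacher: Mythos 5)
Your opening reduction via Theorem~\ref{thm:involution} is correct, and the insertion scheme you sketch (process labels in decreasing order, with exactly $n+1+|S_{1,k}|-k$ choices at step $k$) is indeed the engine of the paper's enumeration: Lemma~\ref{lem:countTrees} encodes trees by a sequence $(a_k)_{k=2}^n$ where $a_k$ ranges over $(n-k)+|S^-_{1,k}|+|S^-_{k,1}| = n+1+|S_{1,k}|-k$ values, and nesting enters exactly where you predict (after extracting a node $k$, the node $\child(k)$, which has larger label, inherits $k$'s position, and $S_{1,k}\subseteq S_{1,\child(k)}$ preserves the local condition). One small accounting correction: the ``generic'' sites number $n-k$, not $n+1-k$, and the sites tied to the coordinate $x_1$ number $|S_{1,k}|+1$, since $|S^-_{1,k}|+|S^-_{k,1}| = |S_{1,k}|+1$.

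However, there is a genuine gap: you propose to run this argument directly on $\mathbf{S}(0,0,0,0)$, and the decoupling you need --- which you yourself flag as ``the heart of the computation'' and then grant --- does not hold in the local form you describe. In a tree of $\mathbf{S}(0,0,0,0)$ the node $1$ need not be the root; by Proposition~\ref{lem:IshContribution} the nontrivial constraints sit at structurally different places and are of different types (node left siblings $w$ of $1$ must satisfy $\lsib(w)\in S^-_{w,1}$, while non-cadet node children $w$ of $1$ must satisfy $\lsib(w)\in S^-_{1,w}$). Consequently, inserting or extracting a label is not a ``single leaf insertion'': when the label sits on the root-to-$1$ cadet chain, extraction forces a re-rooting under which left siblings and children of $1$ trade roles, and reconnection requires a position convention; in small examples the numbers of admissible sites of each kind genuinely vary from one partial tree to another, even though their total is constant, so constancy cannot be proved site-by-site. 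The paper resolves precisely this difficulty with an ingredient absent from your plan: an explicit bijection (the ``flip/rotate around $1$'' maps $f$ and $g$ in the proof of Theorem~\ref{thm:Cayley}) from $\mathbf{S}(0,0,0,0)$ to the normalized family $\mathfrak{T}(\mathbf{S})$, whose trees are rooted at $1$, have every other node with a single child, and carry the manifestly local condition $\lsib(k)\in S^-_{1,k}$ or $\rsib(k)\in S^-_{k,1}$; only after this normalization does your per-step count become a routine verification. Without this normalization (or an equivalent device), the central claim of your proposal remains unproved.
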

	
\section{Background}\label{sec:background}
In this section, we recall notation, constructions, and background results that will be used in this paper. For detailed definitions and background pertaining to hyperplane arrangements, we refer readers to \cite{stanley}. For the purposes of introduction, we follow much of the exposition in \cite{bernardi}.

We consider \textit{hyperplane arrangements} consisting of hyperplanes in $\mathbb{R}^n$ of the form $$H_{i,j,s}:\text{ }x_i-x_j=s$$ for some $1 \leq i < j \leq n$ and $s \in \mathbb{Z}$. These are known as \textit{deformations of the braid arrangement}, where the \emph{braid arrangement} consists of $\{H_{i,j,0}\}$ for all $1 \leq i < j \leq n$. A common question about a hyperplane arrangement is the number of \textit{regions} into which it divides $\mathbb{R}^n$, where a region is a connected component of the complement of the hyperplanes.

Deformations of the braid arrangement include several families of hyperplane arrangements with historically known counting formulas for their number of regions. Examples include the braid arrangement, \emph{Shi arrangement} \cite{shi}, and \emph{Linial arrangement} \cite{PS}. We refer to \cite[Sections 1-2]{bernardi} for additional examples and references.

Consider a deformation of the braid arrangement $\mathcal{A} = \{x_i - x_j = s\}$ in $\mathbb{R}^n$. We identify $\mathcal{A}$ with the tuple of sets $\mathbf{S} = (S_{i,j})_{1 \leq i < j \leq n}$, where for $1 \leq i < j \leq n$,
    $$S_{i,j} := \{s:(x_i - x_j = s)\in \mathcal{A}\}.$$
This hyperplane arrangement is then called the \emph{$\mathbf{S}$-braid arrangement}, and we write $\mathcal{A} = \mathcal{A}_\mathbf{S}$. The number of regions of $\mathcal{A}_\mathbf{S}$ is denoted $r_\mathbf{S}$. We likewise denote
\begin{equation}\label{eqn:Sminus}
    S_{j,i} := S_{i,j}, \qquad S_{i,j}^- := \{s\geq 0|-s \in S_{i,j}\},\qquad S_{j,i}^- := \{0\}\cup\{s>0|s\in S_{i,j}\}
\end{equation}
For use later, we fix the notation
\begin{equation}\label{eqn:m}
    m := \max\left\{|s|: s \in \bigcup\limits_{1 \leq i<j \leq n} S_{i,j}\right\}.
\end{equation}

In this paper, we consider \emph{Ish-type arrangements} as our prototypical example.

\begin{defn}\label{def:Ish}
Let $\mathbf{S}$ be a tuple of sets as above such that $0 \in S_{i,j}$ for all $1 \leq i < j \leq n$ and $S_{i,j} = \{0\}$ whenever $i\neq 1$. Then $\mathcal{A}_\mathbf{S}$ is called an \emph{Ish-type arrangement}. If in addition $S_{1,j} \subseteq S_{1,k}$ whenever $1 < j < k \leq n$, then $\mathcal{A}_\mathbf{S}$ is called a \emph{nested Ish arrangement}. If $S_{1,j} = \{0,1,\ldots,j-1\}$ for all $1 < j \leq n$, then $\mathcal{A}_\mathcal{S}$ is simply called the ($n$-dimensional) \emph{Ish arrangement}. 
\end{defn}

    \begin{figure}
    \centering
    \begin{tikzpicture}\clip (-1.5,-1) rectangle (1,1);
    
        \draw [dashed,thick](0,-2) -- (0,2);
        \draw [thick](-0.5,-2) -- (-0.5,2);
        \draw [thick](-1,-2) -- (-1,2);
        
        \begin{scope}[rotate={-60}]
        \draw [dashed,thick](0,-2)--(0,2);
        \draw [thick](-0.5,-2)-- (-0.5, 2);
        \end{scope}
        
        \begin{scope}[rotate={60}]
        \draw [dashed,thick](0,-2)--(0,2);
        \end{scope}
    \end{tikzpicture}
    \caption{The projection of the Ish arrangement for $n=3$ onto the plane $x_0+x_1+x_2 = 0$, viewed from the direction $(1,1,1)$. The three hyperplanes $x_i-x_j=0$ are drawn as dashed. The complement of the arrangement consists of $16 = 4^2$ connected components.}\label{fig:Ish}
    \end{figure}
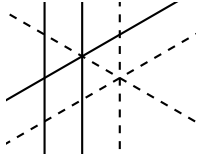

 The Ish arrangement is defined in \cite{armstrong}, and an example with $n=3$ is shown in Figure \ref{fig:Ish}. It is shown in \cite{armstrong} that, like those of the Shi arrangement, the regions of the Ish arrangement are enumerated by the Cayley numbers $(n+1)^{n-1}$. The connection between the regions of the Ish arrangement and those of the Shi arrangement is further explored in  \cite{AR,LRW}, with \cite{LRW} describing an explicit bijection between them. The more general Ish-type and nested Ish arrangements are introduced in \cite{AST}\footnote{In \cite{AST}, an Ish-type arrangement $\mathcal{A}_\mathbf{S}$ is referred to as the ``$\mathcal{N}$-Ish arrangement'' with $\mathcal{N}:= (S_{1,2},S_{1,3},\ldots,S_{1,n})$. Moreover, the definition of a nested Ish arrangement includes the case where there exists a permutation $\sigma$ on $\{2,\ldots,n\}$ so that $S_{1,\sigma(j)}\subseteq S_{1,\sigma(k)}$ whenever $1<j<k\leq n$. Since permuting the axes does not affect the number of regions in a hyperplane arrangement, we have assumed without loss of generality that our nests are always indexed using the identity permutation.}. The authors of that paper prove that nested Ish arrangements are free, and use this fact to show their regions are enumerated by the formula in Equation \ref{countIsh}.

\subsection{The Bernardi Formula}\label{sec:bernardi}

We now give an overview of the constructions and definitions in \cite{bernardi} that lay the foundation of this paper. 

A \emph{rooted tree} is a tree (a connected graph with no cycles) with some vertex designated as the \emph{root}. If $u$ and $v$ are vertices of a rooted tree and the unique path from $v$ to the root goes through $u$, we say $v$ is \emph{above} $u$ and $u$ is \emph{below} $v$. If in addition $u$ and $v$ are connected by an edge, we say $u$ is the \emph{parent} of $v$ and $v$ is one of the \emph{children} of $u$.

An arbitrary vertex in a rooted tree is a called a \textit{node} if it has at least one child and is called a \textit{leaf} otherwise. Under these notions, a \emph{rooted labeled tree} is a rooted tree with the nodes labeled with distinct positive integers from $1$ to the number of nodes.

In this paper, we consider \textit{rooted labeled plane trees}. These are rooted labeled trees with a (left-to-right) ordering imposed on the children of each node. We draw rooted labeled plane trees with the root of the tree at the bottom, each child of a vertex above the original vertex, and the children of any vertex ordered from left to right. Given a node $u$, its \emph{cadet} is its rightmost child that is also a node (if one exists), and is denoted $\cadet(u)$. If $v$ has parent $u$, we define the \emph{left siblings} of $v$ to be the children of $u$ (nodes and leaves) which are to the left of $v$. We denote by $\lsib(v)$ (or $\lsib_T(v)$ if we wish to emphasize the tree $T$) the number of left siblings of $v$. We define \emph{right siblings} and $\rsib(v)$ similarly. An example of these concepts is shown in Figure \ref{fig:tree}.

\begin{figure}
\centering
	\begin{tikzpicture}[scale=0.8]
	    \draw (0,-1.25) -- (-1.5,0);
		\draw (0,-1.25)--(0,0);
		\draw (0,0)--(-2.25,1.5);
		\draw (0,0)--(-0.75,1.5);
		\draw (0,0)--(0.75,1.5);
		\draw (0,0)--(2.25,1.5);
		\draw (-0.75,1.5)--(-0.75,2.75);
		\node[draw,circle,fill=white] at (0,-1.25) {$4$};
		\node[draw,circle,fill=white] at (0,0) {$5$};
		\node[draw,circle,fill=white] at (-0.75,1.5) {$2$};
		\node[draw,circle,fill=white] at (2.25,1.5) {$1$};
		\node[draw,circle,fill=white] at (-0.75,2.75) {$3$};
		\node[draw,circle,fill=white] at (-2.25,1.5) {$6$};
	\end{tikzpicture}
\caption{
A rooted labeled plane tree with 6 nodes. In this tree, the node $4$ is the root, $\cadet(4)=5$, $\cadet(5) = 1$, $\lsib(5)=1$, $\lsib(1)=3$, $\lsib(3)=0$, $\text{parent}(6)=5$, and the node $1$ is a right sibling of the node $6$.}\label{fig:tree}
\end{figure}
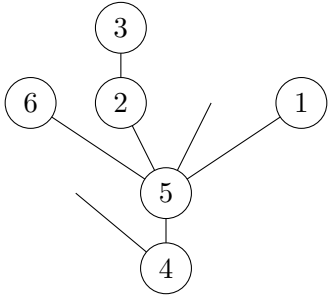

\begin{nota}\label{notation:tmn}
    Let $n$ and $m$ be positive integers. We denote by $\mathcal{T}^{(m)}(n)$ the set of rooted labeled plane trees with $n$ nodes such that each node has $m+1$ children.
\end{nota}

\begin{rem}\label{rem:omitrightsibs}
    We can consider the tree in Figure \ref{fig:tree} as an element of $\mathcal{T}^{(3)}(6)$ by ``adding right siblings". That is, the nodes 1, 3, and 6 are considered to each have 4 children (all leaves), the node 2 is considered to have 3 children (all leaves) which are to the right of 3, and the node 4 is considered to have 2 children (both leaves) to the right of 5. For any node $v$, we then observe that both $\cadet(v)$ (if it exists) and $\lsib(v)$ are the same in the tree drawn in Figure \ref{fig:tree} and the ``larger'' tree in $\mathcal{T}^{(3)}(6)$. We adopt this convention of ``omitting right siblings'' in many of our examples for improved readability.
\end{rem}

For the remainder of this section, let us fix an arbitrary deformation of the braid arrangement $\mathcal{A}_\mathbf{S}$. We then take $n$ to be the dimension of $\mathcal{A}_{\mathbf{S}}$ and $m$ as in Equation \ref{eqn:m}.

\begin{defn}\cite[Definition 4.1]{bernardi}\label{def:S-cadet}
    A sequence $(v_1, v_2, \ldots, v_k)$ of nodes in a tree $T \in \mathcal{T}^{(m)}(n)$ is a \textit{cadet sequence} if $v_j=\cadet(v_{j-1})$ for all $1 < j \leq k$. If in addition $\sum_{p=i+1}^{j} \lsib(v_p) \not\in S^{-}_{v_i,v_j}$ for all $1 \leq i < j \leq k$, then the sequence $(v_1, v_2, \ldots, v_k)$ is called an \emph{$\mathbf{S}$-cadet sequence}.
\end{defn}

\begin{ex}
     The tree in Figure~\ref{fig:tree} has cadet sequences $(4,5,1)$, $(2,3)$, and $(6)$. Subsequences of these which are convex with respect to the relation ``is the cadet of'', for example $(4,5)$ and $(3)$, are also cadet sequences. Now suppose that $\mathcal{A}_\mathbf{S}$ is a deformation of the braid arrangement in $\mathbb{R}^6$ so that $S_{4,5}^- = \{0\}$, $S_{5,1}^- = \{0,2\}$, and $S_{4,1}^- = \{0,2,4\}$. Then both $(4,5)$ and $(5,1)$ are $\mathbf{S}$-cadet sequences, while $(4,5,1)$ is not.
\end{ex}

\begin{rem}
    We note that a cadet sequence is uniquely determined by the nodes it contains. Thus, depending on context, we will freely move between notating cadet sequences by $(v_1,\ldots,v_k)$ and by $\{v_1,\ldots,v_k\}$.
\end{rem}

\begin{defn}\cite[Definition 4.1]{bernardi}\label{def:boxed}
    A \emph{boxed tree} is a pair $(T,B)$, where $T \in \mathcal{T}^{(m)}(n)$ and $B$ is a partition of the nodes of $T$ into cadet sequences. We say that $(T,B)$ is \emph{$\mathbf{S}$-boxed} if each cadet sequence of $B$ is also an $\mathbf{S}$-cadet sequence. The set of $\mathbf{S}$-boxed trees is denoted~$\mathcal{U}_{\mathbf{S}}$.
\end{defn}

Given a boxed tree (resp. $\S$-boxed tree) $(T,B)$, we will sometimes refer to $B$ as a \emph{boxing} (resp. \emph{$\mathbf{S}$-boxing}) of $T$ and refer to the partition elements of $B$ as \emph{boxes} (resp. \emph{$\S$-boxes}). Examples of $\mathbf{S}$-boxings are given in Example \ref{ex:contributions} below.

\begin{thm}\cite[Theorem 4.2]{bernardi}\label{thm:signedFormula}
The number of regions $r_\S$ of the arrangement $\mathcal{A}_\mathbf{S}$ is given by \begin{equation}\label{eqn:Bernardi}r_{\mathbf{S}}=\sum\limits_{(T,B) \in \mathcal{U}_\mathbf{S} } (-1)^{n-|B|},\end{equation}
    where $|B|$ is the number of partition elements in $B$.
\end{thm}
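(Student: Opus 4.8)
The plan is to route the region count through the characteristic polynomial, to compute that polynomial by the finite field method as a count of points over $\mathbb{Z}/q\mathbb{Z}$, and then to reorganize this point count as a signed sum indexed by boxed trees. First I would invoke Zaslavsky's theorem \cite{zaslavsky} to write $r_\S = (-1)^n \chi_{\mathcal{A}_\S}(-1)$, where $\chi_{\mathcal{A}_\S}$ is the characteristic polynomial. Since each hyperplane $x_i - x_j = s$ is defined over $\mathbb{Z}$, the finite field method of Athanasiadis \cite{athanasiadis} applies: for every sufficiently large prime $q$,
\[
\chi_{\mathcal{A}_\S}(q) = \#\left\{ (x_1,\dots,x_n) \in (\mathbb{Z}/q\mathbb{Z})^n : x_i - x_j \not\equiv s \pmod{q} \text{ whenever } s \in S_{i,j} \right\}.
\]
Because both sides agree with a fixed polynomial for infinitely many $q$, it then suffices to give a combinatorial expression for this point count and to evaluate it at $q = -1$ (multiplying by $(-1)^n$ at the end).

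Second, I would encode each admissible configuration by a rooted labeled plane tree in $\mathcal{T}^{(m)}(n)$. The idea is to record the cyclic arrangement of the coordinates $x_1,\dots,x_n$ in $\mathbb{Z}/q\mathbb{Z}$ together with the sizes of the gaps between successive coordinates: gaps larger than $m$ lie beyond the reach of any constraint and may be absorbed into the polynomial $q$-dependence, while the branching with $m+1$ children per node is exactly what is needed to record gaps of size $0,1,\dots,m$. Under such an encoding I expect the forbidden-difference conditions $x_i - x_j \not\equiv s$ to translate precisely into the inequalities $\sum_{p} \lsib(v_p) \notin S^-_{v_i,v_j}$ from Definition \ref{def:S-cadet}, so that the admissible configurations correspond to trees whose maximal cadet chains are $\S$-cadet sequences. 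As a sanity check, in the braid case $m=0$ the trees in $\mathcal{T}^{(0)}(n)$ are labeled paths, $S^-_{i,j} = \{0\}$ forbids every nontrivial box, and the formula collapses to the $n!$ injections $[n] \to \mathbb{Z}/q\mathbb{Z}$, recovering $r_\S = n!$.

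The crux, and the main obstacle, is the sign analysis. The number of admissible points realizing a fixed tree is a polynomial in $q$ obtained by freely positioning certain ``independent'' coordinates, and I would organize this polynomial by an inclusion--exclusion over which successive coordinates along a cadet chain are required to be consecutive; this grouping data is exactly a boxing $B$ refining the chain structure, with each merge of two nodes into a common box carrying a single sign, producing the weight $(-1)^{n-|B|}$ upon setting $q=-1$. The admissibility of a box should coincide exactly with the $\S$-cadet condition, so that the surviving terms range over $\mathcal{U}_\S$. Verifying that this is a genuine sign-matching bijection rather than a numerical coincidence is the delicate part: one must coordinate the plane-tree formalism, the statistics $\lsib$, and the sets $S^-_{i,j}$ so that the polynomiality in $q$, the $\S$-cadet characterization of admissibility, and the bookkeeping of merges all line up, after which the substitution $q=-1$ together with Zaslavsky's sign yields precisely $\sum_{(T,B) \in \mathcal{U}_\S}(-1)^{n-|B|}$.
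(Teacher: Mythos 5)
You should first be aware that the paper you are working from does not prove this statement at all: Theorem \ref{thm:signedFormula} is imported verbatim as \cite[Theorem 4.2]{bernardi} and used as a black box. So the only meaningful comparison is against Bernardi's original argument, which is not a characteristic-polynomial argument. Roughly, Bernardi first puts the regions of the \emph{maximal} deformation (all hyperplanes $x_i-x_j=s$ with $|s|\leq m$) in bijection with the trees in $\mathcal{T}^{(m)}(n)$, and then obtains the signed sum over boxed trees by an inclusion--exclusion (realized by sign-reversing involutions) that accounts for how regions of the maximal arrangement merge into regions of the subarrangement $\mathcal{A}_\mathbf{S}$; the boxes record exactly this merging. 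Your proposed route---Zaslavsky's theorem, then Athanasiadis's finite field method, then evaluation at $q=-1$---is genuinely different in spirit, and the two reductions you invoke are standard and correctly stated.

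However, the proposal has a genuine gap, and it sits exactly where the theorem lives. Everything after the finite-field reduction is asserted rather than proved: you never define the map from points of $(\mathbb{Z}/q\mathbb{Z})^n$ avoiding the hyperplanes to trees in $\mathcal{T}^{(m)}(n)$, so the claim that the avoidance conditions $x_i-x_j\not\equiv s$ translate into the $\mathbf{S}$-cadet inequalities involving $\lsib$ cannot even be checked; and the ``crux'' paragraph, which must produce the boxings $B$ and the weight $(-1)^{n-|B|}$, is carried entirely by ``I expect,'' ``should coincide,'' and ``the delicate part.'' This is not a routine verification one can defer. In particular, for a fixed combinatorial type (cyclic order of coordinates with gap data) the point count is a polynomial in $q$ whose evaluation at $q=-1$ already produces signs of its own, depending on degrees and on how many gaps are ``free''; these must be shown to combine with the signs of your inclusion--exclusion over forced consecutive coordinates so that the total weight depends only on the number of boxes $|B|$, with admissibility of a box matching the $\mathbf{S}$-cadet condition exactly. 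Nothing in the proposal pins down why the plane-tree structure (cadets as rightmost node children, the role of left siblings) is the right bookkeeping device for this. The $m=0$ sanity check is correct but does not substitute for any of this: as written, the proposal is a plausible research plan whose central step---the entire content of Bernardi's theorem---is missing.
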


We refer to Equation \ref{eqn:Bernardi} as the \emph{Bernardi formula}. While the formula holds in general, there are many hyperplane arrangements with known explicit counting formulas (for example, the Ish arrangement). One of the main results in \cite{bernardi} is to recover such formulas when the set $\mathbf{S}$ satisfies a condition called \emph{transitivity}. This beautifully unifies and expands upon many known results, in particular answering a question of Gessel. (See \cite[Section 2.3]{bernardi} and \cite[Section 1]{gessel} for further discussion.)

\begin{defn}\cite[Definition 4.3]{bernardi}\label{def:transitive}
    We call the tuple $\mathbf{S}$ \textit{transitive} if for all distinct $i,j,k \in [n]$ and for all nonnegative integers $s\not\in S_{i,j}^{-}$ and $t \not\in S_{j,k}^{-}$, we have $s+t \not\in S_{i,k}^{-}$.
\end{defn}

\begin{thm}\cite[Theorem 4.6]{bernardi}\label{thm:transitiveFormula}
   If $\mathbf{S}$ is transitive, then there exists a set of trees $\mathcal{T}_{\mathbf{S}}\subseteq\mathcal{T}^{(m)}(n)$ such that $r_\mathbf{S} = \sum_{(T,B) \in    \mathcal{U}_\mathbf{S} } (-1)^{n-|B|}=|\mathcal{T}_{\mathbf S}|$. Moreover, let $B_0$ be the partition of $[n]$ into singletons, and consider $\mathcal{T}_\mathbf{S}$ as a subset of $\mathcal{U}_\mathbf{S}$ by identifying each $T \in \mathcal{T}_\mathbf{S}$ with $(T,B_0) \in \mathcal{U}_\mathbf{S}$. Then there is a sign-reversing involution on $\mathcal{U}_{\mathbf{S}}\setminus \mathcal{T}_{\mathbf{S}}$.
\end{thm}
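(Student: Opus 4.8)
The plan is to prove Theorem \ref{thm:transitiveFormula} by constructing an explicit sign-reversing involution $\Phi$ on the set of $\mathbf{S}$-boxed trees $\mathcal{U}_\mathbf{S}$, whose fixed points are exactly the trees $(T,B_0)$ with $B_0$ the all-singletons boxing and $T$ ranging over some distinguished subset $\mathcal{T}_\mathbf{S}\subseteq\mathcal{T}^{(m)}(n)$. Since $\Phi$ reverses the sign $(-1)^{n-|B|}$, all non-fixed points cancel in the Bernardi formula, leaving $r_\mathbf{S}=|\mathcal{T}_\mathbf{S}|$. The natural candidate for $\Phi$ is a ``merge-or-split'' operation on boxes: I would fix a canonical ordering of the nodes (say, by a depth-first traversal of $T$, or simply by node labels read in a fixed order), scan for the first position where a local rule either merges two adjacent boxes along a cadet sequence into one box or splits a box into two, and apply whichever is legal. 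The sign $(-1)^{n-|B|}$ flips precisely because a merge decreases $|B|$ by one and a split increases it by one, so any such toggle is automatically sign-reversing.

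The key steps, in order, are as follows. First I would identify the ``first mergeable/splittable site'' canonically: given $(T,B)$, look along each cadet sequence of $T$ and find, under the fixed node order, the first consecutive pair $v_{p},v_{p+1}=\cadet(v_p)$ that lies in two different boxes of $B$ but \emph{could} lie in a common $\mathbf{S}$-box, or the first box of size $\geq 2$ that can be cut at its earliest internal edge. The involution merges at the first such pair if merging yields a valid $\mathbf{S}$-boxing, and otherwise splits; transitivity (Definition \ref{def:transitive}) is exactly the hypothesis that guarantees this merge/split rule is well-defined, because it ensures that whenever $\sum_{p=i+1}^{j}\lsib(v_p)\notin S^-_{v_i,v_j}$ fails to be compatible across a junction, the condition propagates consistently, so that a merged box is an $\mathbf{S}$-cadet sequence if and only if each of its constituent adjacent pairs is. Second, I would verify that $\Phi$ is an involution: applying the rule twice returns to the original boxing, which follows because the ``first site'' is determined canonically and a merge and its inverse split occur at the same location. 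Third, I would characterize the fixed points as exactly those $(T,B)$ admitting no legal toggle at any site, show these all have $B=B_0$ (no box can be nontrivial without admitting a split, and no singleton pair can be forced to stay separate except by a genuine obstruction), and \emph{define} $\mathcal{T}_\mathbf{S}$ to be the set of such $T$.

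The main obstacle I expect is establishing that the merge/split rule is consistently defined along an entire cadet sequence rather than just at a single adjacent pair — that is, that the ``local'' $\mathbf{S}$-cadet condition at each junction glues into the ``global'' condition $\sum_{p=i+1}^{j}\lsib(v_p)\notin S^-_{v_i,v_j}$ for all $i<j$ in a box. This is precisely where transitivity is indispensable: the condition $s\notin S^-_{i,j}$ and $t\notin S^-_{j,k}$ implying $s+t\notin S^-_{i,k}$ is what lets me certify that a union of two $\mathbf{S}$-cadet subsequences, glued at a common cadet, is again an $\mathbf{S}$-cadet sequence, so that the merge operation lands back in $\mathcal{U}_\mathbf{S}$. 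Without transitivity this gluing fails, which is consistent with the paper's later claim (Remark \ref{rem:transitiveOnly}) that contributions of $-1$ — impossible when a clean involution of this form exists — can appear. The remaining verifications, that $\Phi$ genuinely reverses sign and that the canonical-site choice makes $\Phi\circ\Phi=\mathrm{id}$, I expect to be routine bookkeeping once the gluing lemma is in hand.
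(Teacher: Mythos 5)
Note that the paper does not prove this statement itself---it is quoted from Bernardi---but your reconstruction is correct and follows essentially the same route as the cited argument and as the paper's own generalization of it in Theorem \ref{thm:characterization}: transitivity makes the $\mathbf{S}$-cadet condition local (a cadet sequence is an $\mathbf{S}$-cadet sequence iff each adjacent pair is), so toggling the first ``good'' junction---a site whose toggleability depends only on $T$, not on $B$---is a well-defined sign-reversing involution. Its fixed points are exactly the pairs $(T,B_0)$ in which every maximal $\mathbf{S}$-cadet sequence of $T$ is a singleton, which is precisely how $\mathcal{T}_\mathbf{S}$ is described in Remark \ref{rem:transitiveOnly}.
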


As stated in the theorem, Bernardi's proof involves the construction of a sign-reversing involution; that is, a bijection $\Phi:\mathcal{U}_{\mathbf{S}}\setminus \mathcal{T}_{\mathbf{S}}\rightarrow\mathcal{U}_{\mathbf{S}}\setminus \mathcal{T}_{\mathbf{S}}$ for which $\Phi^2 = \mathsf{Id}$ and if $\Phi(T_1,B_1) = (T_2,B_2)$, then $(-1)^{n-|B_1|} + (-1)^{n-|B_2|} = 0$.
This makes the corresponding terms in the Bernardi formula ``cancel out''. We adopt a similar approach in Section \ref{sec:IshParams}.

\begin{rem}\label{rem:IshNotTrans}
    Let $\mathcal{A}_\mathbf{S}$ be the Ish arrangement with $n\geq 4$. Then $S^{-}_{4,2}=\{0\}$, $S^{-}_{2,1}=\{0,1\}$, and $S^{-}_{4,1}=\{0,1,2,3\}$. Observe that $1 \not\in S^-_{4,2}$, $2 \not\in S^{-}_{2,1}$, but $1+2 \in S^{-}_{4,1}$. Therefore, for $n \geq 4$, the Ish arrangement is not transitive.
\end{rem}

Motivated by Theorem \ref{thm:transitiveFormula} and this remark, we wish to reduce Bernardi's formula to an enumeration when the assumption of transitivity is removed. We do this for Ish-type arrangements in Theorem \ref{thm:involution}, allowing us to recover the known explicit counting formula for nested Ish arrangements in Theorem \ref{thm:Cayley}.

	\section{The Contribution of a Tree}\label{sec:treeContributions}
	
	In this section, we define and characterize the \emph{contribution} of a (rooted labeled plane) tree to the Bernardi formula. We fix for the remainder of this section an arbitrary deformation of the braid arrangement $\mathcal{A}_\mathbf{S}$. As before, we let $n$ be the dimension of $\mathcal{A}_{\mathbf{S}}$ and define $m$ as in Equation \ref{eqn:m}.

	\begin{defn}\label{def:lastFirst}
	Let $T \in \T^{(m)}(n)$. For any sequence $(v_1, v_2, \ldots, v_k)$ of the nodes of $T$, define the \textit{last node} of the sequence to be $v_k$, and the \textit{first node} of the sequence to be $v_1$. We refer to $k$ as the \emph{length} of the sequence.
	\end{defn}
	\begin{defn}\label{def:maxSequence}
	Let $T \in \T^{(m)}(n)$.
	Define a \textit{maximal cadet sequence} of $T$ to be a cadet sequence $(v_1, v_2, \ldots, v_k)$ of $T$ such that all the children of $v_k$ are leaves, and there is no node $u$ for which $\cadet(u)=v_1$.
	\end{defn}
	
	An example of these concepts is shown in Figure \ref{ex:boxedTree}.

	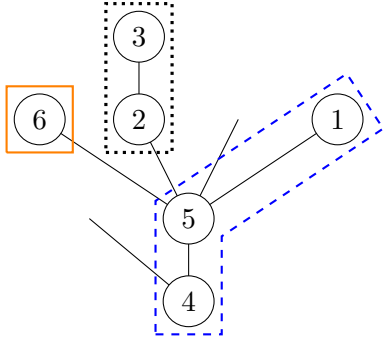
\begin{figure}
	\centering
	\begin{tikzpicture}[scale=0.88]
	    \draw (0,-1.25) -- (-1.5,0);
		\draw (0,-1.25)--(0,0);
		\draw (0,0)--(-2.25,1.5);
		\draw (0,0)--(-0.75,1.5);
		\draw (0,0)--(0.75,1.5);
		\draw (0,0)--(2.25,1.5);
		\draw (-0.75,1.5)--(-0.75,2.75);
		\node[draw,circle,fill=white] at (0,-1.25) {$4$};
		\node[draw,circle,fill=white] at (0,0) {$5$};
		\node[draw,circle,fill=white] at (-0.75,1.5) {$2$};
		\node[draw,circle,fill=white] at (2.25,1.5) {$1$};
		\node[draw,circle,fill=white] at (-0.75,2.75) {$3$};
		\node[draw,circle,fill=white] at (-2.25,1.5) {$6$};
		\draw (-0.5,-1.75)--(0.5,-1.75)--(0.5,-0.2676)--(2.94,1.3613)--(2.388,2.19)--(-0.5,0.2676)--(-0.5,-1.75) [thick,color=blue,dashed];
		\draw (-2.75,1)--(-1.75,1)--(-1.75,2)--(-2.75,2)--(-2.75,1) [thick,color=orange];
		\draw (-1.25,1)--(-0.25,1)--(-0.25,3.25)--(-1.25,3.25)--(-1.25,1) [very thick,dotted,color=black];
	\end{tikzpicture}
	\caption{A rooted labeled plane tree partitioned into maximal cadet sequences. A box is drawn around the nodes of each sequence (but this is not necessarily an $\mathbf{S}$-boxing). The last node of the maximal cadet sequence in the blue/dashed box is $1$, and the first node is $4$.}\label{ex:boxedTree}
	\end{figure}
	
	\begin{rem}\label{rem:comleteCadet}
	    As in Figure \ref{ex:boxedTree}, the nodes of any rooted labeled plane tree can be uniquely partitioned into maximal cadet sequences. Indeed, since no two nodes can have the same cadet and each node has at most one cadet, we can complete any nonempty cadet sequence (in the forward and reverse directions) to a unique maximal cadet sequence. Possibly, such a sequence will contain only one node.
	\end{rem}
	
	Given a (possibly maximal) cadet sequence of a rooted labeled plane tree, we can consider an $\mathbf{S}$-boxing \emph{of the sequence} as in Definition \ref{def:boxed}. More precisely, we have the following.
	
	\begin{defn}
	    Let $T \in \mathcal{T}^{(m)}(n)$ and let $X$ be an arbitrary cadet sequence of $T$. Define an \emph{$\mathbf{S}$-boxing} of $X$ to be a partition $B$ of the nodes in $X$ so that each $Y \in B$ is an $\mathbf{S}$-cadet sequence (of~$T$).
	\end{defn}
	
	We are now prepared to define the \emph{contributions} of a tree and a cadet sequence.

	\begin{defn}\label{def:contribution}
	{ Let $T \in \T^{(m)}(n)$ and let $X$ be a (not necessarily maximal) cadet sequence of $T$. Let $k$ be the length of $X$. Denote by $\mathcal{U}_\mathbf{S}(T)$ and $\mathcal{U}_{\mathbf{S}}(X)$ the sets of $\mathbf{S}$-boxings of $T$ and $X$, respectively. We then define the \emph{contribution} of $T$ and the \emph{contribution} of $X$ as
	$$r_{\mathbf{S}}(T) := \sum_{B \in \mathcal{U}_\mathbf{S}(T)} (-1)^{n-|B|},\qquad\qquad
	r_{\mathbf{S}}(X) :=\sum_{B \in \mathcal{U}_\mathbf{S}(X)}(-1)^{k-|B|}.$$}
\end{defn}

	Observe that, by definition, the Bernardi formula (Equation \ref{eqn:Bernardi}) says that the number of regions of $\mathcal{A}_\S$ is the sum over $T \in \T^{(m)}(n)$ of the contribution of $T$; that is, 
	$$r_\mathbf{S} = \sum_{T \in \T^{(m)}(n)}r_\mathbf{S}(T).$$
	
	\begin{lem}\label{lem:contributionProduct}
	Let $T \in \mathcal{T}^{(m)}(n)$. Then
	the contribution of $T$ is equal to the product of the contributions of its maximal cadet sequences. 
	\end{lem}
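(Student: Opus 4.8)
The plan is to show that the contribution $r_{\mathbf{S}}(T)$ factors as a product over the maximal cadet sequences of $T$ by exhibiting a natural bijection between $\mathbf{S}$-boxings of $T$ and tuples of $\mathbf{S}$-boxings of its maximal cadet sequences. First I would invoke Remark~\ref{rem:comleteCadet}: the nodes of $T$ partition uniquely into maximal cadet sequences $X_1, X_2, \ldots, X_\ell$, with $X_1, \ldots, X_\ell$ partitioning $[n]$ and $\sum_i k_i = n$ where $k_i$ is the length of $X_i$. The key structural observation is that any box (an $\mathbf{S}$-cadet sequence) appearing in an $\mathbf{S}$-boxing $B$ of $T$ is itself a cadet sequence, and since every cadet sequence sits inside a \emph{unique} maximal cadet sequence (again by Remark~\ref{rem:comleteCadet}), each box of $B$ is entirely contained in exactly one $X_i$. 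Therefore $B$ decomposes as a disjoint union $B = \bigsqcup_{i=1}^\ell B_i$, where $B_i := \{Y \in B : Y \subseteq X_i\}$ is precisely an $\mathbf{S}$-boxing of $X_i$.

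The second step is to verify this decomposition is a bijection $\mathcal{U}_{\mathbf{S}}(T) \xrightarrow{\sim} \prod_{i=1}^\ell \mathcal{U}_{\mathbf{S}}(X_i)$. The map $B \mapsto (B_1, \ldots, B_\ell)$ is well defined by the previous paragraph. For the inverse, given $\mathbf{S}$-boxings $B_i$ of each $X_i$, their union $\bigcup_i B_i$ is a partition of $[n]$ into $\mathbf{S}$-cadet sequences (since the $X_i$ partition $[n]$), hence an $\mathbf{S}$-boxing of $T$; these two constructions are mutually inverse. Crucially, the $\mathbf{S}$-cadet condition on a box $Y$ (Definition~\ref{def:S-cadet}) depends only on the nodes of $Y$ and their $\lsib$ values within $T$, so whether $Y$ is a valid $\mathbf{S}$-box is the same whether we view it inside $T$ or inside the sequence $X_i$ containing it. This ensures the two notions of $\mathbf{S}$-boxing agree and no cross-sequence interaction can occur.

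The final step is to translate this bijection into the sign-weighted generating sum. Under the correspondence we have $|B| = \sum_{i=1}^\ell |B_i|$, so
\begin{align*}
    r_{\mathbf{S}}(T) &= \sum_{B \in \mathcal{U}_{\mathbf{S}}(T)} (-1)^{n - |B|}
    = \sum_{(B_1,\ldots,B_\ell) \in \prod_i \mathcal{U}_{\mathbf{S}}(X_i)} (-1)^{\sum_i (k_i - |B_i|)} \\
    &= \prod_{i=1}^\ell \left( \sum_{B_i \in \mathcal{U}_{\mathbf{S}}(X_i)} (-1)^{k_i - |B_i|} \right)
    = \prod_{i=1}^\ell r_{\mathbf{S}}(X_i),
\end{align*}
where the factorization of the sum into a product of sums is the standard distributive-law computation and we have used $n = \sum_i k_i$. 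This yields exactly the claimed identity.

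I expect the main obstacle to be justifying cleanly that every box of $B$ lies within a single maximal cadet sequence, i.e.\ that boxes cannot ``straddle'' two maximal cadet sequences. This rests on the facts that a box is a cadet sequence and that the relation ``is the cadet of'' assigns each node at most one cadet and each node at most one ``cadet-parent'', so the maximal cadet sequences are the connected components (convex blocks) of this relation and any cadet subsequence is confined to one component. Once this containment is established, the bijection and the distributive-law manipulation are routine, so the entire weight of the argument is carried by this structural lemma about how cadet sequences interact with the canonical partition into maximal cadet sequences.
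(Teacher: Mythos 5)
Your proof is correct and follows essentially the same route as the paper: both identify $\mathcal{U}_{\mathbf{S}}(T)$ with the product $\prod_i \mathcal{U}_{\mathbf{S}}(X_i)$ via the fact that every $\mathbf{S}$-box lies in a unique maximal cadet sequence, and then factor the signed sum by the distributive law. Your write-up merely spells out the bijection (and the containment argument you worried about) in more detail than the paper does; note also that the paper's definition of an $\mathbf{S}$-boxing of $X$ already requires each box to be an $\mathbf{S}$-cadet sequence \emph{of $T$}, so the compatibility of the two notions holds by definition.
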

	\begin{proof}
	Let $X_1,\ldots,X_t$ be the maximal cadet sequences of $T$. As any $\mathbf{S}$-cadet sequence is necessarily contained in a maximal cadet sequence, choosing an $\mathbf{S}$-boxing of $T$ is equivalent to choosing an $\mathbf{S}$-boxing of each maximal cadet sequence $X_i$. That is, we can identify $\mathcal{U}_\mathbf{S}(T)$ with the product $\mathcal{V}:=\mathcal{U}_\mathbf{S}(X_1)\times\cdots\times \mathcal{U}_\mathbf{S}(X_t)$. We then have
	\begin{eqnarray*}
	    r_\mathbf{S}(T) &=& \sum_{B \in \mathcal{U}_\mathbf{S}(T)} (-1)^{n-|B|}\\
	    &=& \sum_{(B_1,\ldots,B_t) \in \mathcal{V}} \ \prod_{j = 1}^t (-1)^{|X_j| - |B_j|}\\
	    &=& \prod_{j = 1}^t \ \sum_{B_j \in \mathcal{U}_\mathbf{S}(X_j)}(-1)^{|X_j| - |B_j|}\\
	    &=& \prod_{j = 1}^t r_\mathbf{S}(X_j)
	\end{eqnarray*}
	\end{proof}
	
	\begin{defn}\label{def:maxBox}
	Let $T \in \T^{(m)}(n)$. Let $(v_i,v_{i+1},\ldots,v_{j})$ be an $\mathbf{S}$-cadet sequence of $T$ and let $(v_1,v_2,\ldots,v_k)$ be the maximal cadet sequence containing it. We say $(v_i,v_{i+1},\ldots,v_{j})$ is a \emph{maximal $\mathbf{S}$-cadet sequence} if both (a) either $i = 1$ or $(v_{i-1},v_i,\ldots,v_{j})$ is not an $\mathbf{S}$-cadet sequence and (b) either $j = k$ or $(v_i,\ldots,v_j,v_{j+1})$ is not an $\mathbf{S}$-cadet sequence.
	\end{defn}
	
	\begin{ex}\label{ex:contributions}
    In the tree from Figure \ref{ex:boxedTree}, suppose that $S_{2,3}^{-}=S_{4,5}^{-}=S_{5,1}^{-}=\{0\}$, and $S_{4,1}^{-}=\{0,1,2,3,4,5\}$. Then, writing ($\S$-)boxings as the list of sets of nodes in each $\mathbf{S}$-cadet sequence,
    \begin{enumerate}
        \item The only valid $\mathbf{S}$-boxing of the maximal cadet sequence of the orange/solid box is the partition $\{6\}$, for a contribution of $(-1)^{1-1}=1$.
        \item The only valid $\mathbf{S}$-boxing of the maximal cadet sequence of the black/dotted box is the partition $\{2\}\{3\}$, for a contribution of $(-1)^{2-2}=1$.
        \item The valid $\mathbf{S}$-boxings of the maximal cadet sequence of the blue/dashed box are the partitions $\{4\}\{5\}\{1\}$, $\{4,5\}\{1\}$, and $\{4\}\{5,1\}$ for a contribution of $(-1)^{3-3}+(-1)^{3-2}+(-1)^{3-2}=-1$.
        \item The valid $\mathbf{S}$-boxings of the tree are $\{6\}\{2\}\{3\}\{4\}\{5\}\{1\}$, $\{6\}\{2\}\{3\}\{4,5\}\{1\}$, and \\ $\{6\}\{2\}\{3\}\{4\}\{5,1\}$, for a sum of
        \begin{eqnarray*}
            -1 &=& (-1)^{6-6}+(-1)^{6-5}+(-1)^{6-5} \\
            &=&\left((-1)^{1-1}\right)\left((-1)^{2-2}\right)\left((-1)^{3-3}+(-1)^{3-2}+(-1)^{3-2}\right),
        \end{eqnarray*}
        which can be seen to be the product of the contributions of the maximal cadet sequences.
    \end{enumerate}
    Moreover, the maximal $\mathbf{S}$-cadet sequences of the blue/dashed maximal cadet sequence are $\{4,5\}$ and $\{5,1\}$, since $\{4,5,1\}$ is not an $\mathbf{S}$-cadet sequence.
	\end{ex}
	
	\begin{defn}
	   Let $T \in \T^{(m)}(n)$, and let $X$ be a nonempty cadet sequence of $T$. We say that $X$ is \textit{$\mathbf{S}$-connected} if
	    \begin{enumerate}
	        \item Given a maximal $\mathbf{S}$-cadet sequence $Y$ of $T$, either $X \cap Y = \emptyset$ or $Y \subseteq X$.
	        \item No cadet sequence satisfying (1) is properly contained in $X$.
	    \end{enumerate}
	\end{defn}
	
\begin{lem}\label{rem:connected}
    Let $T \in \T^{(m)}(n)$ and let $X = (v_1,\ldots,v_k)$ be a maximal cadet sequence of $T$. Then for all $1 \leq i \leq k$, there exist unique indices $1 \leq i' \leq i \leq i''\leq k$ so that $(v_{i'},\ldots,v_{i},\ldots,v_{i''})$ is an $\mathbf{S}$-connected cadet sequence. That is, any maximal cadet sequence can be uniquely partitioned into $\mathbf{S}$-connected cadet sequences. 
\end{lem}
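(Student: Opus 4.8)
The plan is to translate everything into the combinatorics of subintervals of $\{1,\dots,k\}$. First I would record the crucial monotonicity property: since the defining condition of an $\mathbf{S}$-cadet sequence in Definition \ref{def:S-cadet} ranges over \emph{all} pairs of indices, any contiguous subsequence of an $\mathbf{S}$-cadet sequence is again an $\mathbf{S}$-cadet sequence. Combined with Remark \ref{rem:comleteCadet} (within the fixed maximal cadet sequence $X$, the cadet subsequences are exactly the contiguous ones $(v_a,\dots,v_b)$), this identifies the $\mathbf{S}$-cadet subsequences of $X$ with a family of intervals $[a,b]\subseteq\{1,\dots,k\}$ that is downward closed under inclusion. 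Because a single node is vacuously an $\mathbf{S}$-cadet sequence, every index lies in some such interval, so the maximal $\mathbf{S}$-cadet sequences of $X$ (the maximal intervals in this family) cover $\{1,\dots,k\}$.

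The key device is a notion of \emph{valid cut}: I would declare the gap between consecutive indices $\ell$ and $\ell+1$ to be a valid cut if no maximal $\mathbf{S}$-cadet sequence of $X$ contains both $\ell$ and $\ell+1$. Cutting $\{1,\dots,k\}$ at all valid cuts produces a partition into contiguous blocks, and I claim these blocks are precisely the $\mathbf{S}$-connected cadet sequences. The bulk of the argument is then three verifications. (i) Each block $P$ satisfies condition (1): if some maximal $\mathbf{S}$-cadet sequence $I$ met $P$ without being contained in it, then $I$ would straddle an endpoint gap of $P$, making that gap an invalid cut, contradicting how $P$ was formed. (ii) Each block is minimal among intervals satisfying (1): any proper subinterval of $P$ omits an index adjacent across an \emph{internal} gap of $P$ to one it contains, and internal gaps of $P$ are invalid cuts, so a maximal $\mathbf{S}$-cadet sequence straddles it and witnesses a failure of condition (1). (iii) Conversely, any interval $J$ satisfying condition (1) has both of its boundary gaps valid (otherwise a straddling maximal $\mathbf{S}$-cadet sequence would partially overlap $J$), so $J$ is a union of consecutive blocks; by minimality an $\mathbf{S}$-connected sequence must be a single block.

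Putting (i)--(iii) together shows the $\mathbf{S}$-connected cadet sequences are exactly the blocks determined by the valid cuts, and since the blocks partition $\{1,\dots,k\}$, each index $i$ lies in a unique block $[i',i'']$. This yields the claimed unique $i'\le i\le i''$, and hence the unique partition of $X$ into $\mathbf{S}$-connected cadet sequences.

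The step I expect to be the main obstacle is handling the fact that distinct maximal $\mathbf{S}$-cadet sequences may overlap (as with $\{4,5\}$ and $\{5,1\}$ in Example \ref{ex:contributions}); a naive attempt to partition by maximal $\mathbf{S}$-cadet sequences directly fails because they need not be disjoint, and condition (1) is exactly what forces overlapping maximal sequences to merge into a single $\mathbf{S}$-connected block. Phrasing the construction through valid cuts is what cleanly absorbs these overlaps; the alternative of taking the transitive closure of the overlap relation on maximal $\mathbf{S}$-cadet sequences would also work but is more cumbersome to verify.
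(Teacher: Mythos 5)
Your proof is correct, but it is organized differently from the paper's. The paper's own argument is a three-line minimality argument: for each node $v_i$ it considers the family $\mathcal{Y}_i$ of cadet sequences containing $v_i$ and satisfying condition (1) of $\mathbf{S}$-connectedness, observes that $X \in \mathcal{Y}_i$ and that $\mathcal{Y}_i$ is closed under intersections, and concludes that $Y_i = \bigcap_{Y \in \mathcal{Y}_i} Y$ is the unique $\mathbf{S}$-connected cadet sequence through $v_i$. Your valid-cut construction instead produces the whole partition at once and identifies its blocks explicitly: the $\mathbf{S}$-connected sequences are exactly the intervals between consecutive gaps $\{\ell,\ell+1\}$ that are not covered by any maximal $\mathbf{S}$-cadet sequence. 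Each approach has something to offer. The paper's is shorter, but it leaves a point implicit: minimality of $Y_i$ within $\mathcal{Y}_i$ only rules out proper subsequences satisfying (1) that \emph{contain} $v_i$, whereas condition (2) also forbids subsequences satisfying (1) that avoid $v_i$; closing this requires a small interval argument (if $W \subsetneq Y_i$ satisfies (1) and misses $v_i$, then the part of $Y_i$ on the far side of $W$ also satisfies (1), contradicting minimality), which is precisely the kind of verification your step (ii) carries out in full. Your route also makes the partition property automatic, since blocks determined by cuts tile $\{1,\dots,k\}$ by construction, and it yields a concrete description of the $\mathbf{S}$-connected sequences that could be reused algorithmically (e.g., in the spirit of Remark~\ref{rem:complexity}). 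The trade-off is length: you must check (i)--(iii) separately, where the paper's intersection-closure observation does the corresponding work in one stroke. Both arguments ultimately rest on the same two elementary facts you isolate at the outset, namely that contiguous subsequences of $\mathbf{S}$-cadet sequences are $\mathbf{S}$-cadet sequences and that every cadet sequence meeting $X$ lies inside $X$.
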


\begin{proof}
     Let $v_i \in X = (v_1,\ldots,v_k)$ and let $\mathcal{Y}_i$ be the set of cadet sequences containing $v_i$ and satisfying (1) in the definition of an $\mathbf{S}$-connected cadet sequence. Since $X \in \mathcal{Y}_i$ and $\mathcal{Y}_i$ is closed under intersections, we see that $Y_i = \bigcap_{Y \in \mathcal{Y}_i}Y$ is the unique $\mathbf{S}$-connected cadet sequence containing~$v_i$.
\end{proof}
	 
	\begin{lem}\label{lem:contributionProduct2}
	    The contribution of a maximal cadet sequence is equal to the product of the contributions of its $\mathbf{S}$-connected cadet sequences.
	\end{lem}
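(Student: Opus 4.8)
The plan is to mirror the proof of Lemma~\ref{lem:contributionProduct}, replacing the factorization of $T$ into maximal cadet sequences with the factorization of a maximal cadet sequence $X = (v_1,\ldots,v_k)$ into its $\mathbf{S}$-connected cadet sequences $Z_1,\ldots,Z_\ell$, which partition $X$ by Lemma~\ref{rem:connected}. The crux is to establish a bijection $\mathcal{U}_\mathbf{S}(X) \cong \mathcal{U}_\mathbf{S}(Z_1)\times\cdots\times\mathcal{U}_\mathbf{S}(Z_\ell)$; once this is in hand, the sign bookkeeping is identical to that lemma.

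To obtain the bijection, I would first show that every $\mathbf{S}$-cadet sequence contained in $X$ lies entirely within a single $\mathbf{S}$-connected cadet sequence $Z_i$. This is the key step, and I would argue it in two stages. First, any $\mathbf{S}$-cadet sequence $Y$ is contained in some maximal $\mathbf{S}$-cadet sequence: if $Y$ is not already maximal, then by Definition~\ref{def:maxBox} at least one of conditions (a), (b) fails, so $Y$ can be extended by one node in some direction while remaining an $\mathbf{S}$-cadet sequence; since $X$ is finite this process terminates at a maximal $\mathbf{S}$-cadet sequence containing $Y$. Second, any maximal $\mathbf{S}$-cadet sequence $Y'$ is contained in a single $Z_i$: since the $Z_i$ partition $X$, the sequence $Y'$ meets some $Z_i$, and because $Z_i$ satisfies condition (1) in the definition of $\mathbf{S}$-connectedness, $Y' \cap Z_i \neq \emptyset$ forces $Y' \subseteq Z_i$. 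Combining the two stages shows that every box of an $\mathbf{S}$-boxing lies in one $Z_i$.

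With this in hand, the bijection is immediate: any $\mathbf{S}$-boxing $B$ of $X$ restricts to an $\mathbf{S}$-boxing $B_i$ of each $Z_i$ (namely, the boxes of $B$ that lie in $Z_i$), and conversely the union of $\mathbf{S}$-boxings of the $Z_i$ is an $\mathbf{S}$-boxing of $X$ because the $Z_i$ partition $X$ and each individual box remains an $\mathbf{S}$-cadet sequence of $T$. Under this identification $k = \sum_i |Z_i|$ and $|B| = \sum_i |B_i|$, so
\[
r_\mathbf{S}(X) = \sum_{B \in \mathcal{U}_\mathbf{S}(X)} (-1)^{k - |B|}
= \sum_{(B_1,\ldots,B_\ell)} \ \prod_{i=1}^{\ell} (-1)^{|Z_i| - |B_i|}
= \prod_{i=1}^{\ell} \ \sum_{B_i \in \mathcal{U}_\mathbf{S}(Z_i)} (-1)^{|Z_i| - |B_i|}
= \prod_{i=1}^{\ell} r_\mathbf{S}(Z_i),
\]
exactly as in the computation of Lemma~\ref{lem:contributionProduct}.

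I expect the main obstacle to be the first stage of the key step — verifying cleanly that every $\mathbf{S}$-cadet sequence sits inside a maximal $\mathbf{S}$-cadet sequence, and then inside a single $\mathbf{S}$-connected component — since this is where the delicate interplay between the three notions ($\mathbf{S}$-cadet, maximal $\mathbf{S}$-cadet, and $\mathbf{S}$-connected) must be handled carefully. In particular one must be sure that the termination argument for the extension procedure and the application of condition (1) fit together so that no box straddles two components. Once the containment claim is secured, the remainder is a routine repackaging of the sign factorization already carried out in Lemma~\ref{lem:contributionProduct}.
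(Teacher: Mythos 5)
Your proposal is correct and takes essentially the same route as the paper: both arguments rest on the containment chain ($\mathbf{S}$-cadet sequence $\subseteq$ maximal $\mathbf{S}$-cadet sequence $\subseteq$ $\mathbf{S}$-connected cadet sequence), which gives the identification $\mathcal{U}_\mathbf{S}(X) \cong \mathcal{U}_\mathbf{S}(Z_1)\times\cdots\times\mathcal{U}_\mathbf{S}(Z_\ell)$, followed by the sign computation of Lemma~\ref{lem:contributionProduct}. The only difference is one of detail: you spell out the extension argument and the application of condition (1) that the paper simply recalls in one sentence.
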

	\begin{proof}
	    Recall that any $\mathbf{S}$-cadet sequence must be contained within a maximal $\mathbf{S}$-cadet sequence, and any maximal $\mathbf{S}$-cadet sequence must be contained within some $\mathbf{S}$-connected cadet sequence. Thus choosing an $\mathbf{S}$-boxing of a maximal cadet sequence is equivalent to choosing an $\mathbf{S}$-boxing of each of its $\mathbf{S}$-connected cadet sequences. The result then follows using the same argument as in Lemma \ref{lem:contributionProduct}.
	\end{proof}
	
    \begin{ex}
    Figure \ref{fig:maxBoxes} shows a maximal cadet sequence and its maximal $\mathbf{S}$-cadet sequences. (We have suppressed the numbers of left siblings and the definition of $\S$.) This can be partitioned into the $\mathbf{S}$-connected cadet sequences with nodes labeled $\{1,2,3\}$, $\{4\}$, and $\{5,6\}$. \begin{enumerate}
        \item The valid $\mathbf{S}$-boxings of $\{1,2,3\}$ are $\{1\}\{2\}\{3\}$, $\{1,2\}\{3\}$, and $\{1\}\{2,3\}$, so the contribution is $(-1)^{3-3}+(-1)^{3-2}+(-1)^{3-2}=-1$.
        \item The only valid $\mathbf{S}$-boxing of $\{4\}$ is $\{4\}$, for a contribution of $(-1)^{1-1}=1$.
        \item The valid $\mathbf{S}$-boxings of $\{5,6\}$ are $\{5\}\{6\}$ and $\{5,6\}$, for a contribution of $(-1)^{2-2}+(-1)^{2-1}=0$.
        \item The valid $\S$-boxings of the entire maximal cadet sequence are:
        \begin{eqnarray*}
        \{1,2,3\}\{4\}\{5\}\{6\}, \qquad&\{1,2,3\}\{4\}\{5,6\},\qquad&\{1,2\}\{3\}\{4\}\{5\}\{6\},\\
        \{1,2\}\{3\}\{4\}\{5,6\},\qquad&\{1\}\{2,3\}\{4\}\{5\}\{6\},\qquad&\{1\}\{2,3\}\{4\}\{5,6\},
        \end{eqnarray*} for a contribution of $$(-1)^{6-4}+(-1)^{6-3}+(-1)^{6-5}+(-1)^{6-4}+(-1)^{6-5}+(-1)^{6-4}$$
        $$=((-1)^{3-3}+(-1)^{3-2}+(-1)^{3-2})((-1)^{1-1})((-1)^{2-2}+(-1)^{2-1})=0$$
    \end{enumerate}
    \end{ex}
    
    \begin{figure}
    \centering
    \begin{tikzpicture}
        \begin{scope}[rotate={-90}]
	    \draw (0,1*1.25)--(0,6*1.25);
		\node[draw,circle,fill=white] at (0,1*1.25) {$1$};
		\node[draw,circle,fill=white] at (0,2*1.25) {$2$};
		\node[draw,circle,fill=white] at (0,3*1.25) {$3$};
		\node[draw,circle,fill=white] at (0,4*1.25) {$4$};
		\node[draw,circle,fill=white] at (0,5*1.25) {$5$};
		\node[draw,circle,fill=white] at (0,6*1.25) {$6$};
		\draw (-0.5,0.75)--(0.5,0.75)--(0.5,3)--(-0.5,3)--(-0.5,0.75);
		\draw (-0.6,2)--(0.6,2)--(0.6,4.25)--(-0.6,4.25)--(-0.6,2);
		\draw (-0.5,4.5)--(0.5,4.5)--(0.5,5.5)--(-0.5,5.5)--(-0.5,4.5);
		\draw (0.5,5.75)--(0.5,8)--(-0.5,8)--(-0.5,5.75)--(0.5,5.75);
		\end{scope}
	\end{tikzpicture}
	\caption{A maximal cadet sequence and its maximal $\mathbf{S}$-cadet sequences. Cadet relationships move left to right; that is, if there is an edge from $u$ to $v$ and $v$ is right of $u$, then $\cadet(u) = v$.}\label{fig:maxBoxes}
    \end{figure}
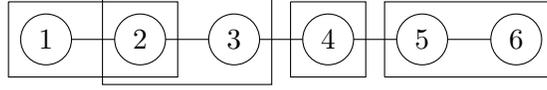

\begin{defn}\label{def:reaches}
	    Let $T \in \T^{(m)}(n)$ and let $(v_1,v_2, \ldots, v_k)$ be an $\mathbf{S}$-connected cadet sequence of $T$ with maximal $\mathbf{S}$-cadet sequences $(X_1,X_2, \ldots, X_{k'})$, in increasing order of index of last node. For convenience, define $X_0 = \{0\} = X_{k'+1}$ and $\parent(v_1) = 0 = \cadet(v_k)$. For $0 \leq i < j \leq k'$, we say $X_i$ \textit{reaches} $X_{j}$ if the parent of the largest indexed node in $X_j\setminus X_{j+1}$ is contained in $X_i$. We say a (not necessarily maximal) $\mathbf{S}$-cadet sequence $X$ \textit{precedes} $X_j$ if the cadet of the last node in $X$ is the last node in $X_j\setminus X_{j+1}$.
\end{defn}

    \begin{ex}\label{ex:reaching}
       Suppose we have a maximal cadet sequence $X = (1,2,3,4,5,6,7)$ with maximal $\mathbf{S}$-cadet sequences $X_1 = \{1,2,3\}$, $X_2 = \{2,3,4,5\}$, $X_3 = \{3,4,5,6\}$, and $X_4 = \{5,6,7\}$. We note that $X$ is actually an $\mathbf{S}$-connected cadet sequence in this case. Now recall that $X_0 = \{0\} = X_5$. Then $X_1$ reaches $X_2$ because $\parent(2) \in X_1$ and $X_1$ reaches $X_3$ because $\parent(4) \in X_1$. Likewise, $X_2$ reaches $X_3$, but $X_2$ does not reach $X_4$. Moreover, the $\mathbf{S}$-box $\{1\}$ precedes $X_2$, any $\mathbf{S}$-box with last node 3 precedes $X_3$, and any $\mathbf{S}$-box with last node $6$ precedes $X_4$.
    \end{ex}

\begin{rem}\label{rem:empty}
We note that the condition $\parent(v_0) = 0$ implies that $X_{0}$ reaches $X_1$ if $X_2\setminus X_1$ contains a single node and $X_{0}$ does not reach any maximal $\mathbf{S}$-cadet sequence otherwise. Likewise, the condition $\cadet(v_k) = 0$ implies that $X_{k'-1}$ reaches $X_{k'}$ if $X_{k'}\setminus X_{k'-1}$ contains a single node and $X_{k'-1}$ does not reach any maximal $\mathbf{S}$-cadet sequence otherwise.
\end{rem}

We are now prepared to state the main result of this section.

\begin{thm}[Theorem \ref{thmA}]\label{thm:characterization}
	 Let $T \in \T^{(m)}(n)$ and let $X = (v_1,v_2, \ldots, v_k)$ be an $\mathbf{S}$-connected cadet sequence of $T$ with maximal $\mathbf{S}$-cadet sequences $(X_1,X_2, \ldots, X_{k'})$, in increasing order of index of last node. Define $X_{0} = \{0\} = X_{k'+1}$. Generate a subsequence $(X_{i_{0}}X_{i_1},\ldots,X_{i_t})$ of $(X_{0},X_1,X_2, \ldots, X_{k'})$ by the following algorithm:
	\begin{enumerate}
	    \item Define $X_{i_0}=X_0$
	    \item For $j \geq 0$ such that $i_{j} \neq k'$, let $X_{i_{j+1}}$ be the first maximal $\mathbf{S}$-cadet sequence reached by $X_{i_{j}}$ but not reached by $X_{i_{\ell}}$ for $\ell<j$, if such a maximal $\mathbf{S}$-cadet sequence exists. If no such maximal $\mathbf{S}$-cadet sequence exists, the algorithm fails.
	    \item If $i_j=k'$, set $t = j$ and terminate the algorithm.
	\end{enumerate}
    Now, if any step of this algorithm fails, the contribution of $X$ is $0$. Otherwise, the contribution of $X$ is given by $r_{\mathbf{S}}(X) = (-1)^{k-t}$.
\end{thm}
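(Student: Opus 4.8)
The plan is to reduce $r_{\mathbf{S}}(X)$ to a signed count of tilings, solve the resulting linear recursion by a telescoping substitution that immediately produces the $0/\pm1$ dichotomy, and only then match the telescoped quantity to the greedy reaching algorithm. First I would record that, because ``is the cadet of'' linearly orders a maximal cadet sequence, every box of an $\mathbf{S}$-boxing of $X$ is a contiguous block $(v_p,\ldots,v_q)$, and that such a block is an $\mathbf{S}$-cadet sequence precisely when it is contained in one of the maximal $\mathbf{S}$-cadet sequences $X_\ell=(v_{a_\ell},\ldots,v_{b_\ell})$; here $a_\ell$ and $b_\ell$ are both strictly increasing by maximality, with $a_1=1$ and $b_{k'}=k$. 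Thus $r_{\mathbf{S}}(X)$ is the sum of $(-1)^{k-b}$, with $b$ the number of blocks, over all partitions of $\{v_1,\ldots,v_k\}$ into such blocks. Writing $F(q)$ for the analogous signed count of the prefix $(v_1,\ldots,v_q)$ (with $F(0)=1$) and splitting off the last block gives
\[
F(q)=\sum_{p=a_{L(q)}}^{q}(-1)^{q-p}F(p-1),\qquad L(q):=\min\{\ell:b_\ell\geq q\},
\]
since $(v_p,\ldots,v_q)$ is a valid block exactly when $p\geq a_{L(q)}$.

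Next I would set $G(q)=(-1)^qF(q)$ and $H(q)=\sum_{r=0}^{q}G(r)$ (with $H(-1)=0$). A one-line manipulation collapses the recursion to the strikingly simple $H(q)=H(a_{L(q)}-2)$. Since $a_{L(q)}$ depends only on which interval $(b_{\ell-1},b_\ell]$ contains $q$, this shows $H$ is constant on each such interval; call the constant $h_\ell:=H(a_\ell-2)$. The only seed values are $H(-1)=0$ and $H(0)=1$, so an immediate induction forces $h_\ell\in\{0,1\}$ for every $\ell$. Consequently $G(k)=H(k)-H(k-1)\in\{-1,0,1\}$ and $r_{\mathbf{S}}(X)=(-1)^kG(k)\in\{0,\pm1\}$, which already establishes the dichotomy; it remains to evaluate $G(k)$ and identify it with the algorithm's output.

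The bridge to the statement is the observation that $a_\ell-2$ is exactly the index of the $\parent$ of the last node of $X_{\ell-1}\setminus X_\ell$, so the interval containing $a_\ell-2$ is indexed by the least $i$ for which $X_i$ reaches $X_{\ell-1}$. Writing $c(m)$ for that least reacher (allowing the virtual $X_0$, for which I set $h_0:=1$), the constants obey $h_1=0$ and $h_\ell=h_{c(\ell-1)}$. I would then analyze the greedy walk $X_0=X_{i_0},X_{i_1},\ldots$: its indices $i_j$ are strictly increasing, and since $X_{k'}$ is reached only by $X_{k'-1}$ (and only when $b_{k'-1}=k-1$), any \emph{successful} walk must end with $i_{t-1}=k'-1$ and $b_{k'-1}=k-1$, so that $H(k-1)=h_{k'-1}$ and $G(k)=h_{k'}-h_{k'-1}$.

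The heart of the argument, and the step I expect to be the main obstacle, is the key lemma that along a successful walk $c(i_j-1)=i_{j-2}$ for all $j\geq2$; that is, the propagation map $\ell\mapsto c(\ell-1)$ sends $i_j$ to $i_{j-2}$. Granting this, the values $h_{i_j}$ are constant on even $j$ (tracing back to $h_{i_0}=h_0=1$) and constant on odd $j$ (tracing back to $h_{i_1}=h_1=0$), so $h_{i_j}=1$ iff $j$ is even. As $i_t=k'$ and $i_{t-1}=k'-1$ are consecutive steps, $h_{k'}$ and $h_{k'-1}$ are opposite, with $h_{k'}=1\Leftrightarrow t$ even; hence $G(k)=h_{k'}-h_{k'-1}=(-1)^t$ and $r_{\mathbf{S}}(X)=(-1)^{k-t}$. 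Conversely, when the walk fails I would show $H(k)=H(k-1)$ — either because $b_{k'-1}<k-1$ forces $H(k-1)=h_{k'}$ directly, or because the failure propagates to give $h_{k'}=h_{k'-1}$ — so that the contribution is $0$. Proving the key lemma and this failure characterization rigorously, by induction on $j$ and using the fact that the reachers of each $X_m$ form a contiguous range of indices, is where the genuine work lies.
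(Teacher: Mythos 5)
Your reduction to the tiling recursion is correct, and it is a genuinely different route from the paper's proof: the paper proceeds by two sign-reversing involutions on $\mathbf{S}$-boxings (first cancelling boxings containing a box that precedes no maximal $\mathbf{S}$-cadet sequence, then identifying the survivors with ``reaching sequences'' and cancelling all of those except the one produced by the algorithm), whereas you encode the whole signed sum in the prefix quantities $F(q)$. I verified your block description, the recursion, the telescoping identity $H(q)=H(a_{L(q)}-2)$, the conclusion $h_\ell\in\{0,1\}$, and the endgame formula $r_{\mathbf{S}}(X)=(-1)^k(h_{k'}-h_{P(k')})$, where $P(m)$ denotes the smallest index of a maximal $\mathbf{S}$-cadet sequence containing the parent of the last node of $X_m\setminus X_{m+1}$. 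This gives the $0,\pm 1$ dichotomy with far less casework than the paper and is, up to that point, sound.

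The bridge to the algorithm, however, rests on a false statement: the key lemma $c(i_j-1)=i_{j-2}$ fails. Counterexample (realizable, e.g., as a path of nine nodes labelled $1,\dots,9$ from the root upward, all left-sibling counts $0$, with $0\in S_{i,j}$ exactly for those pairs $i<j$ such that $\{v_i,\dots,v_j\}$ lies in no $X_\ell$): take maximal $\mathbf{S}$-cadet sequences $X_1=\{v_1,v_2,v_3\}$, $X_2=\{v_2,v_3,v_4\}$, $X_3=\{v_3,v_4,v_5,v_6\}$, $X_4=\{v_4,v_5,v_6,v_7\}$, $X_5=\{v_6,v_7,v_8\}$, $X_6=\{v_8,v_9\}$. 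The greedy walk is $X_0,X_1,X_2,X_4,X_5,X_6$, successful with $t=5$ (and indeed $r_{\mathbf{S}}(X)=(-1)^{9-5}=1$, as one can check from the recursion), but at $j=5$ the least reacher of $X_{i_5-1}=X_5$ is $X_3$ (since $\parent(v_7)=v_6\in X_3$), not $X_{i_3}=X_4$. Thus $h_{i_5}=h_3$, and nothing in your induction identifies $h_3$ with $h_{i_3}=h_4$, so the even/odd propagation along the walk collapses. The statement you actually need, and which is true, is stronger: along a successful walk $h_m$ is constant on each entire block $i_{j-1}<m\le i_j$, equal to $1$ for $j$ even and $0$ for $j$ odd. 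This does follow by induction on $m$ from the monotonicity of $P$ (the contiguity fact you mention): for $i_{j-1}<m<i_j$, greediness excludes $P(m)\in(i_{j-2},i_{j-1}]$, while monotonicity gives $P(i_{j-1})\le P(m)\le P(i_j)$, pinning $P(m)$ into $(i_{j-3},i_{j-2}]$; the same monotonicity disposes of the failure case. Note also that in the failure analysis your ``least reacher'' dictionary has a second defect: $X_{\ell-1}$ may have no reachers at all (reaching requires strictly smaller index) even though $H(a_\ell-2)$ is perfectly well defined, e.g. $X_1=\{v_1,v_2\}$, $X_2=\{v_2,v_3,v_4,v_5\}$, $X_3=\{v_5,v_6\}$. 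So your architecture can be completed to a proof, but the lemma you identified as the heart of the argument is not merely unproven --- as stated, it is false, and must be replaced by the block-constancy statement.
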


Before proving Theorem~\ref{thm:characterization}, we give a detailed example.

\begin{ex}\label{ex:algorithm}
Consider the setup of Example~\ref{ex:reaching}. The algorithm in Theorem~\ref{thm:characterization} first defines $X_{i_0} = X_0$. Then, the algorithm sets $X_{i_1} = X_1$ (see Remark~\ref{rem:empty}) and $X_{i_2} = X_2$. Now, since $X_2$ reaches only $X_3$, which is also reached by $X_1$, the algorithm fails at the $j = 2$ step. This means the contribution of $X$ should be zero. We will show this by considering the possible $\mathbf{S}$-boxings of $X$ in cases.
\begin{enumerate}
    \item We first consider those $\mathbf{S}$-boxings of $X$ which contain $\{1\}$. We then have four subcases:
    \begin{enumerate}
        \item Let $\mathcal{B}$ be the set of $\mathbf{S}$-boxings of $X$ which contain both $\{1\}$ and $\{2\}$. For $B \in \mathcal{B}$ we define a new $\mathbf{S}$-boxing as follows:
        \begin{enumerate}
            \item If there exists $Y \in B$ with $\{3,4\} \subseteq Y$, then let $$B' = (B\setminus \{Y\})\cup \left\{\{3\},Y\setminus\{3\}\right\}.$$
            \item Otherwise, let $Y \in B$ be the $\mathbf{S}$-box with $4 \in Y$. Then let
            $$B' = (B\setminus Y) \cup \{\{3\}\cup Y\}.$$
        \end{enumerate}
        We observe that the association $B \mapsto B'$ is an involution on $\mathcal{B}$. Moreover, for all $B \in \mathcal{B}$, the values of $|B|$ and $|B'|$ will differ by 1. We conclude that the $\mathbf{S}$-boxings in $\mathcal{B}$ together contribute 0 to value of $r_\mathbf{S}(X)$. (Note in particular that $\{2\}$ does not precede any maximal $\mathbf{S}$-cadet sequence. We will use this fact to generalize this argument in the proof of Theorem~\ref{thm:characterization}.)
        \item Let $\mathcal{C}$ be the set of $\mathbf{S}$-boxings of $X$ which contain both $\{1\}$ and $\{2,3\}$. We will consider $\mathcal{C}$ later.
        \item By replacing 3 and 4 with 5 and 6, reasoning analogous to case (1a) shows that the $\mathbf{S}$-boxings of $X$ containing both $\{1\}$ and $\{2,3,4\}$ together contribute 0 to $r_\mathbf{S}(X)$.
        \item By replacing 3 and 4 with 6 and 7, reasoning analogous to case (1a) shows that the $\mathbf{S}$-boxings of $X$ containing both $\{1\}$ and $\{2,3,4,5\}$ together contribute 0 to $r_\mathbf{S}(X)$.
    \end{enumerate}
    \item Reasoning analogous to case (1a) shows that the $\mathbf{S}$-boxings of $X$ containing $\{1,2\}$ and together contribute 0 to $r_\mathbf{S}(X)$. (We note that $\{1,2\}$ does not precede any maximal $\mathbf{S}$-cadet sequence.)
    \item Let $\mathcal{D}$ be the set of $\mathbf{S}$-boxings of $X$ which contain $\{1,2,3\}$. Again, reasoning analogous to case (1a) shows that the $\mathbf{S}$-boxings in $\mathcal{C}\cup\mathcal{D}$ contribute 0 to $r_\mathbf{S}(X)$.
\end{enumerate}
Together, the casework above shows that $r_\mathbf{S}(X) = 0$, as desired.
\end{ex}

We now formalize and extend the arguments from Example~\ref{ex:algorithm} to prove Theorem~\ref{thm:characterization}. The resulting
proof uses ideas similar to that of \cite[Theorem 4.6]{bernardi}.
	
\begin{proof}[Proof of Theorem~\ref{thm:characterization}]
    We first claim that if $X_2\setminus X_1$ contains more than a single node, then the algorithm fails and the contribution of $X$ is 0. Indeed, suppose there exist two nodes $v_i, v_{i+1} \in X_1\setminus X_2$. For an $\mathbf{S}$-boxing $B \in \mathcal{U}_\mathbf{S}(X)$, we define a new $\mathbf{S}$-boxing as follows:
    \begin{enumerate}
        \item If $v_i$ is in the $\mathbf{S}$-cadet sequence $Y \in B$ and $v_{i+1}$ is in the $\mathbf{S}$-cadet sequence $Y'\in B$ with $Y \neq Y'$, then replace $Y$ and $Y'$ with $Y \cup Y'$.
        \item If there exists an $\mathbf{S}$-cadet sequence $Y\in B$ containing both $v_i$ and $v_{i+1}$, replace it with $Y' = \{v_j \in Y|j \leq i\}$ and $Y'' = \{v_j \in Y|j \geq i+1\}$.
    \end{enumerate}
    As this construction changes the number of maximal $\mathbf{S}$-cadet sequences by 1, we have a sign-reversing involution on $\mathcal{U}_{\mathbf{S}}(X)$. In particular, the contribution of $X$ is 0. Moreover, we recall that if $X_2\setminus X_1$ contains more than a single node, then $X_0$ does not reach any maximal $\mathbf{S}$-cadet sequence, so the algorithm fails in this case.

Now let $\mathcal{B}\subseteq \mathcal{U}_\mathbf{S}(X)$ be the set of $\S$-boxings $B$ of $X$ for which every $\mathbf{S}$-box in $B$ except the last one (the one containing $v_k)$ precedes some maximal $\mathbf{S}$-cadet sequence $X_j$ for $j \leq k'$.

We claim there is a sign-reversing involution on the $\S$-boxings which are not in $\mathcal{B}$. Indeed, let $B$ be such an $\S$-boxing and let $Y \in B$ be the first $\mathbf{S}$-box that does not precede any maximal $\mathbf{S}$-cadet sequence. Let $v$ the last node of $Y$. Now since $Y$ does not precede any maximal $\mathbf{S}$-cadet sequence, we have that $\cadet(v)$ and $\cadet(\cadet(v))$ are contained in $X_j\setminus X_{j+1}$ for some $j$. As before, we can then construct a sign-reversing involution by combing or splitting the $\mathbf{S}$-boxes containing $\cadet(v)$ and $\cadet(\cadet(v))$.

We have thus far shown that the contribution of $X$ is 
$r_\mathbf{S}(X) = \sum_{B \in \mathcal{B}}(-1)^{k-|B|}.$ We wish to reduce this to the sum over a single $\S$-boxing (of size $t + 1$).

We claim there is a bijection between $\mathcal{B}$ and the set of subsequences $(X_{i_0},\ldots,X_{i_r})$ of \linebreak $(X_0,X_1,\ldots,X_{k'+1})$ for which $X_{i_0} = X_0, X_{i_r} = X_{k'}$, and $X_{i_j}$ reaches $X_{i_{j+1}}$ for all $j$. Moreover, we will show that the number of $\mathbf{S}$-boxes of any $B\in \mathcal{B}$ is the same as the length (i.e., the value of $r+1$) of the corresponding sequence.

For readability, we use $X_{1+i_j}$ for the maximal $\mathbf{S}$-cadet sequence with index $1+i_j$ and $X_{i_{j+1}}$ for the maximal $\mathbf{S}$-cadet sequence with index $i_{j+1}$ (the maximal $\mathbf{S}$-cadet sequence for which $i$ has index $j+1$).

First consider an $\S$-boxing $B = (Y_1,\ldots,Y_r) \in \mathcal{B}$. For $1 \leq j \leq r-1$, let $X_{i_{j+1}}$ be the maximal $\mathbf{S}$-cadet sequence that $Y_j$ precedes. Let $X_{i_0} = X_0$ and $X_{i_1} = X_1$. Note that $X_{i_r} = X_{k'}$ because $v_k \in X_{k'}\setminus X_{k'-1}$, so $Y_r$ must be contained within $X_{k'}$, so the cadet of the last node in $Y_{r-1}$ must be in $X_{k'}$.

To see that $X_{i_j}$ reaches $X_{i_{j+1}}$, let $v$ be the last node in $Y_{j-1}$. Then since $Y_{j-1}$ precedes $X_{i_j}$, we have $\cadet(v) \in X_{i_j}\setminus X_{1+i_{j}}$. Moreover, we have $\cadet(v) \in Y_{j}$ which means $Y_{j} \subseteq X_{i_j}$. Now let $u$ be the last node in $Y_{j}$. Then since $Y_{j}$ precedes $X_{i_{j+1}}$, we have $\cadet(u) \in X_{i_{j+1}}\setminus X_{1+i_{j+1}}$, and $\cadet(u)$ is also the largest indexed node with this property. Now, since the node $u$ with this property is contained in $Y_{j}$, it is contained in $X_{i_j}$, so $X_{i_j}$ reaches $X_{i_{j+1}}$ by definition.

Now consider a sequence $(X_{i_0},\ldots,X_{i_r})$ so that $X_{i_0} = X_0, X_{i_r} = X_{k'}$, and $X_{i_j}$ reaches $X_{i_{j+1}}$ for all $j$. For $j \in [r-1]$, define $Y_j$ to have last node $u \in X_{i_{j}}$ so that $\cadet(u) \in X_{i_{j+1}}\setminus X_{1+i_{j+1}}$ and $\cadet(\cadet (u)) \in X_{1+i_{j+1}}$. (Note that this is possible since $X_{i_{j}}$ reaches $X_{i_{j+1}}$.) Define the last node of $Y_r$ to be $v_k$.

Now that we have identified our remaining $\mathbf{S}$-boxings with certain sequences of maximal $\mathbf{S}$-cadet sequences, we wish to find a sign-reversing involution on all such sequences except the one generated by the algorithm. For such a sequence let ($1_j$) denote the condition that $X_{i_{j+2}}$ is not reached by $X_{i_{j}}$, and let ($2_j$) denote the condition that $X_{i_{j+1}}$ is the first $\mathbf{S}$-box of the full sequence reached by $X_{i_j}$, but not by $X_{i_{j-1}}$.
    
   Suppose there is some smallest index $q$ for which either $(1_q)$ is false or $(2_q)$ is false. If $(2_q)$ is false, we can insert $X_{p}$ after $X_{i_j}$ such that $X_p$ is the smallest indexed $\mathbf{S}$-box reached by $X_{i_j}$ but not $X_{i_{j-1}}$. This increases the length by 1 and makes $(1_q)$ false and $(2_q)$ true.
   
   Otherwise, $(2_q)$ is true and $(1_q)$ is false. We can then delete $X_{q+1}$ from the subsequence. This decreases the length by 1 and makes $(2_q)$ false. As these two operations are inverse to each other, we can consider only $\S$-boxings $B \in \mathcal{B}$ corresponding to sequences satisfying $(1_j)$ and $(2_j)$ for all~$j$. This leaves only the sequence generated by the algorithm.
\end{proof}

We observe that Theorem \ref{thm:characterization}, together with Lemmas \ref{lem:contributionProduct} and \ref{lem:contributionProduct2}, provide an algorithm for computing the contribution $r_{\mathbf{S}}(T)$ of any tree $T \in \T^{(m)}(n)$.

\begin{rem}\label{rem:complexity}
We note that the unsimplified version of Bernardi's formula can only be computed in exponential time. (This essentially follows from needing to find all possible $\mathbf{S}$-boxings of the given tree.) On the other hand, the complexity of our proposed algorithm appears to be at most 
$\mathcal{O}(n^2\log m)$. Indeed, the step which appears to be the most nontrivial is finding all of the maximal $\mathbf{S}$-cadet sequences. To do this, let us suppose we are starting with a cadet sequence $(v_1,\ldots,v_k)$. Then for each $i$, we denote by $f(i)$ the smallest index for which $\sum_{\ell = i+1}^{f(i)} \lsib(v_\ell) \in S_{i,f(i)}^-$. (If none exists, we can set $f(i) = k+1$.) Assuming a complexity of $\mathcal{O}(\log m)$ to determine if a number is in a given set $S_{i,\ell}^-$, we can compute the function $f$ in $\mathcal{O}(n^2\log m)$. The longest $\mathbf{S}$-cadet sequence starting at $v_i$ will then end at the vertex with index $g(i):=\min_{i \leq \ell \leq f(i)-1}(f(\ell)-1)$. We see that $g$ can be computed in $\mathcal{O}(n^2)$. It then remains to delete those $\mathbf{S}$-sequences we have generated which are not maximal (it is possible they can be ``extended downards''). This process can be done in $\mathcal{O}(n)$. Putting this together, we conclude that finding all of the maximal $\mathcal{S}$-cadet sequences of a given cadet sequence has complexity at most $\mathcal{O}(n^2\log m)$.
\end{rem}

In particular, the following corollaries make it easier to compute the contribution of many trees.

	\begin{cor}\label{cor:mustInt}
	In a tree with nonzero contribution, any maximal $\mathbf{S}$-cadet sequence of size more than one must intersect some other maximal $\mathbf{S}$-cadet sequence.
	\end{cor}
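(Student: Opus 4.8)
\textbf{Proof proposal for Corollary~\ref{cor:mustInt}.}

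The plan is to use the contrapositive together with the characterization in Theorem~\ref{thm:characterization}. Suppose $T$ contains a maximal $\mathbf{S}$-cadet sequence $Z$ of size at least two that is disjoint from every other maximal $\mathbf{S}$-cadet sequence. By Lemmas~\ref{lem:contributionProduct} and~\ref{lem:contributionProduct2}, the contribution $r_\mathbf{S}(T)$ factors as a product over the maximal cadet sequences and then over their $\mathbf{S}$-connected cadet sequences; since the total contribution is a product, it suffices to show that the factor coming from the $\mathbf{S}$-connected cadet sequence $X$ containing $Z$ is~$0$. So I would reduce immediately to analyzing the single $\mathbf{S}$-connected cadet sequence $X$ and run the algorithm of Theorem~\ref{thm:characterization} on it.

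The key observation is that a maximal $\mathbf{S}$-cadet sequence disjoint from all the others forces the ``reaching'' structure to stall. First I would argue that if $Z$ has size more than one and meets no other maximal $\mathbf{S}$-cadet sequence, then $Z$ must be an $\mathbf{S}$-connected cadet sequence all by itself (by the uniqueness in Lemma~\ref{rem:connected}, the $\mathbf{S}$-connected component containing any node of $Z$ cannot extend past $Z$, since extending would require overlapping another maximal $\mathbf{S}$-cadet sequence). Thus I take $X = Z$, so that $X$ has exactly one maximal $\mathbf{S}$-cadet sequence, namely $X_1 = X = Z$, and $k' = 1$. Now I examine the algorithm. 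We have $X_0 = \{0\}$, and since $Z$ has size $k \geq 2$, the set $X_2\setminus X_1$ in the algorithm's indexing is $X_{k'+1}\setminus X_{k'} = \{0\}$ relative to $X_{k'}=X_1$; more directly, $X_1\setminus X_2 = X_1$ contains $k \geq 2$ nodes, so by Remark~\ref{rem:empty} the sequence $X_0$ does \emph{not} reach $X_1$. Hence at the very first step the algorithm must choose $X_{i_1}$ to be a maximal $\mathbf{S}$-cadet sequence reached by $X_{i_0} = X_0$, and no such sequence exists. The algorithm therefore fails, and by Theorem~\ref{thm:characterization} the contribution of $X$ is~$0$.

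Since $r_\mathbf{S}(X) = 0$ and $r_\mathbf{S}(T)$ is a product of contributions of $\mathbf{S}$-connected cadet sequences that includes this factor, we conclude $r_\mathbf{S}(T) = 0$. Contrapositively, if $T$ has nonzero contribution then no maximal $\mathbf{S}$-cadet sequence of size greater than one can be disjoint from all the others, which is exactly the claim.

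I expect the main obstacle to be the bookkeeping in the reduction of the first paragraph, namely verifying cleanly that a size $\geq 2$ maximal $\mathbf{S}$-cadet sequence disjoint from all others is forced to coincide with its own $\mathbf{S}$-connected cadet sequence. One must rule out the possibility that $X$ properly contains $Z$ together with singleton $\mathbf{S}$-connected pieces on either side; this is where Lemma~\ref{rem:connected} and the defining conditions of $\mathbf{S}$-connectedness do the work, and care is needed with the boundary conventions $X_0 = \{0\} = X_{k'+1}$ and $\parent(v_1)=0=\cadet(v_k)$ when invoking Remark~\ref{rem:empty}. Everything after that reduction is a direct application of the failure condition in the algorithm.
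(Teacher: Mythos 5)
Your proof is correct and takes essentially the same route as the paper's: both reduce, via Lemmas~\ref{lem:contributionProduct} and~\ref{lem:contributionProduct2}, to the observation that the isolated maximal $\mathbf{S}$-cadet sequence $Z$ is itself an $\mathbf{S}$-connected cadet sequence whose contribution vanishes. The only (cosmetic) difference is that where you invoke the algorithm-failure clause of Theorem~\ref{thm:characterization} (since $X_0$ reaches nothing when $|X_1\setminus X_2|\geq 2$), the paper instead directly re-runs the combine/split sign-reversing involution on pairs $v,\cadet(v)\in Z$ from the theorem's proof --- which is precisely the argument that establishes the failure clause you cite, so the underlying mathematics is identical.
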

	
	\begin{proof}
	    Let $X$ be a maximal $\mathbf{S}$-cadet sequence which does not intersect some other maximal $\mathbf{S}$-cadet sequence. We observe that $X$ is an $\mathbf{S}$-connected cadet sequence. Moreover, if there exist two nodes $v, \cadet(v) \in X$, then as in the proof of the theorem, we can construct a sign-reversing involution by combining or splitting the $\mathbf{S}$-boxes containing $v$ and $\cadet(v)$. Thus any such tree has contribution~0.
	\end{proof}
	
\begin{rem}\label{rem:transitiveOnly}
    We observe that for transitive arrangements, no two distinct maximal $\mathbf{S}$-cadet sequences intersect. Thus, in this case, $\mathbf{S}$-connected cadet sequences and maximal $\mathbf{S}$-cadet sequences are one and the same.  This implies that the contribution of each $\mathbf{S}$-connected cadet sequence (and by extension each tree) is either 0 or 1. (This fact was leveraged by Bernardi to prove Theorem~\ref{thm:transitiveFormula}. Indeed, in \cite[Definition~4.5]{bernardi}, the set $\mathcal{T}_\mathbf{S}$ in the statement of Theorem~\ref{thm:transitiveFormula} is defined to be the set of trees $T \in \mathcal{T}^{(m)}(n)$ for which every maximal $\mathbf{S}$-cadet sequence consists of exactly one node.)
	On the other hand, we suspect that trees contributing -1 always exist for non-transitive arrangements. For example, if $s \notin S_{i,j}^-$, $t \notin S_{j,k}^-$, and $s + t \in S_{i,k}^-$, then an $\mathbf{S}$-connected cadet sequence $(v_i,v_j,v_k)$ with $\lsib(v_j) = s$ and $\lsib(v_k) = t$ would contribute $-1$.
\end{rem}

\begin{cor}\label{cor:firstLastSmall}
    Let $T \in \mathcal{T}^{(m)}(n)$ and let $X = (v_1,v_2,\ldots,v_k)$ be an $\mathbf{S}$-connected cadet sequence of $T$ with maximal $\mathbf{S}$-cadet sequences $X_1,\ldots,X_{k'}$ (in increasing order of index of last node). Define $X_0 = \{0\} = X_{k'+1}$. If $X$ has nonzero contribution and $k' > 1$, then $|X_1\setminus X_2| = 1 = |X_{k'}\setminus X_{k'-1}|$.
\end{cor}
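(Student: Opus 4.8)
The plan is to prove each of the two equalities by pinching a sign-reversing involution of the kind built in the proof of Theorem~\ref{thm:characterization}. I will treat $|X_1\setminus X_2|=1$ in detail; the equality $|X_{k'}\setminus X_{k'-1}|=1$ will then follow by the mirror-image argument applied at the top of the sequence.

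First I would record two structural facts. Since $X_1$ and $X_2$ are distinct maximal $\mathbf{S}$-cadet sequences and $k'>1$, neither can contain the other, so $X_1\setminus X_2\neq\emptyset$ and hence $|X_1\setminus X_2|\geq 1$. Next, viewing each maximal $\mathbf{S}$-cadet sequence as an interval of indices inside $X=(v_1,\dots,v_k)$, I note that when these intervals are listed in increasing order of their last index their first indices are also strictly increasing; otherwise some $X_p$ would be contained in $X_{p+1}$, contradicting the maximality of $X_p$. In particular $X_1$ begins at $v_1$, and every node of $X_1\setminus X_2$ has index strictly smaller than the first index of every later maximal $\mathbf{S}$-cadet sequence. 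Consequently, a node of $X_1\setminus X_2$ lies in no maximal $\mathbf{S}$-cadet sequence other than $X_1$.

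The main step is the involution. Suppose toward a contradiction that $|X_1\setminus X_2|\geq 2$, and pick consecutive nodes $v_i,v_{i+1}\in X_1\setminus X_2$. For any $\mathbf{S}$-boxing $B\in\mathcal{U}_{\mathbf{S}}(X)$, each $\mathbf{S}$-box is an $\mathbf{S}$-cadet sequence and so is contained in some maximal $\mathbf{S}$-cadet sequence; because the box containing $v_i$ (resp.\ $v_{i+1}$) contains a node of $X_1\setminus X_2$, that maximal sequence must be $X_1$, so both boxes lie inside $X_1$. Hence, if $v_i$ and $v_{i+1}$ lie in different boxes their union is again a subinterval of $X_1$, and therefore an $\mathbf{S}$-cadet sequence, while if they lie in the same box it splits into two $\mathbf{S}$-cadet sequences. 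Exactly as in the first claim of the proof of Theorem~\ref{thm:characterization}, combining or splitting the boxes at $v_i,v_{i+1}$ is then a well-defined sign-reversing involution on $\mathcal{U}_{\mathbf{S}}(X)$, forcing $r_{\mathbf{S}}(X)=0$ and contradicting the hypothesis. Therefore $|X_1\setminus X_2|=1$.

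The step I expect to require the most care is the one showing that every $\mathbf{S}$-box meeting $X_1\setminus X_2$ is contained in $X_1$, since this is precisely what guarantees the combine operation never leaves $X_1$ and so always produces a valid $\mathbf{S}$-cadet sequence; it rests on the strict monotonicity of the first indices of the maximal $\mathbf{S}$-cadet sequences. Once this is in place, the symmetric argument using consecutive nodes of $X_{k'}\setminus X_{k'-1}$ (which, by the same monotonicity applied at the other end, lie in no maximal $\mathbf{S}$-cadet sequence other than $X_{k'}$) yields $|X_{k'}\setminus X_{k'-1}|=1$ identically.
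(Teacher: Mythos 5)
Your proof is correct, but it takes a different route from the paper's. The paper reads both equalities off the \emph{statement} of Theorem~\ref{thm:characterization}: since the contribution is nonzero, the algorithm cannot fail, so by Remark~\ref{rem:empty} the auxiliary box $X_0$ must reach $X_1$, forcing $|X_1\setminus X_2|=1$; and since the algorithm terminates at $X_{k'}$, some $X_j$ with $j<k'$ reaches $X_{k'}$, i.e.\ $\parent(v_k)=v_{k-1}\in X_j$, forcing $j=k'-1$ and hence $|X_{k'}\setminus X_{k'-1}|=1$. You bypass the algorithm entirely and instead re-run, at each end of $X$, the sign-reversing involution that combines or splits boxes at a pair of consecutive nodes lying in a unique maximal $\mathbf{S}$-cadet sequence; this is the same mechanism as the first claim in the paper's proof of Theorem~\ref{thm:characterization} (and as Corollary~\ref{cor:mustInt}), but applying it symmetrically at the top of $X$ replaces the paper's ``reaching'' argument there. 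Your route buys self-containedness (only the definition of contribution and the interval structure of maximal $\mathbf{S}$-cadet sequences are needed, not the algorithm), treats the two ends symmetrically, and supplies a verification the paper's involution leaves implicit --- that the union of the two combined boxes is again an $\mathbf{S}$-cadet sequence, which you justify by showing both boxes lie inside $X_1$ (resp.\ $X_{k'}$) via the strict monotonicity of first indices. The paper's route buys brevity: granted the theorem, the corollary is two lines.
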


\begin{proof}
    The fact that $|X_1\setminus X_2| = 1$ is immediate from Remark \ref{rem:empty} and Theorem \ref{thm:characterization}. Moreover, by Theorem \ref{thm:characterization}, there exists $j < k'$ so that $X_j$ reaches $X_{k'}$. Since the largest indexed node in $X_{k'}\setminus X_{k'+1}$ is $v_k$, this means $v_{k-1} \in X_j$ and $j = k'-1$.
\end{proof}
	
	\section{Contributions for Almost Transitive Arrangements}\label{sec:Ish}
	In this section, we define \textit{almost transitive} arrangements and show that Ish-type arrangements are almost transitive. We then further characterize the contributions of trees for almost transitive arrangements.
	
	\begin{defn}\label{def:Almost Transitive}
	We say a hyperplane arrangement $\mathcal{A}_\mathbf{S}$ is \emph{almost transitive} if for all distinct $i,j,k \in [n]$ with $1 \notin \{i,k\}$ and for all nonnegative integers $s \in S_{i,j}^-$ and $t \in S_{j,k}^-$, we have $s + t \notin S_{i,k}^{-}$.
	\end{defn}
	
	\begin{rem}
	Since permuting the axes does not affect the number of regions of a hyperplane arrangement or the corresponding signed sum of trees, the condition $1 \notin \{i,k\}$ could be replaced with $c \notin \{i,k\}$ for any fixed $c \in [n]$.
	\end{rem}
	
	\begin{lem}\label{lem:IshAlmost Transitive}
	{ Any Ish-type arrangement is almost transitive}.
	\end{lem}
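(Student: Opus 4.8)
The plan is to unwind the definition of almost transitivity and reduce the whole statement to the single elementary fact that $0$ belongs to every set $S^-_{a,b}$ attached to an Ish-type arrangement. So fix distinct $i,j,k \in [n]$ with $1 \notin \{i,k\}$ together with nonnegative integers $s \notin S^-_{i,j}$ and $t \notin S^-_{j,k}$; the goal is to show $s + t \notin S^-_{i,k}$.

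The first step is to pin down $S^-_{i,k}$. Since $i \neq 1$ and $k \neq 1$, the defining property of an Ish-type arrangement (Definition \ref{def:Ish}) forces $S_{i,k} = \{0\}$, regardless of which of $i,k$ is larger. Feeding this into the two order-dependent cases of Equation~\eqref{eqn:Sminus} gives $S^-_{i,k} = \{0\}$ in either case. Thus the desired conclusion $s + t \notin S^-_{i,k}$ is equivalent to the inequality $s + t \neq 0$, i.e. (as $s,t \geq 0$) to the assertion that $s$ and $t$ are not both zero. The second step records that $0 \in S^-_{a,b}$ for every pair $a \neq b$: the Ish-type hypothesis $0 \in S_{a,b}$, together with the symmetry $S_{b,a} = S_{a,b}$, plugs directly into Equation~\eqref{eqn:Sminus}, yielding $0 \in S^-_{a,b}$ both in the $a<b$ case (since $-0 = 0 \in S_{a,b}$) and in the $a>b$ case (where $\{0\} \subseteq S^-_{a,b}$ by definition). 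Applying this to the pair $(i,j)$, the hypothesis $s \notin S^-_{i,j}$ rules out $s = 0$, so $s \geq 1$; hence $s + t \geq 1 > 0$ and $s + t \notin \{0\} = S^-_{i,k}$, as required.

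The only delicate point is the bookkeeping with the order-dependent definition of $S^-$ in Equation~\eqref{eqn:Sminus}, which treats its two indices asymmetrically; but once $S^-_{i,k}$ is identified with $\{0\}$ this asymmetry becomes harmless, so I expect no genuine obstacle. It is worth emphasizing that the argument never uses any information about the second pair $(j,k)$ beyond $t \geq 0$, and does not depend on the value of $j$ (which may equal $1$); the membership $1 \notin \{i,k\}$ is exactly what is needed to collapse $S^-_{i,k}$ to $\{0\}$, which is precisely the difference from the failure of full transitivity recorded in Remark~\ref{rem:IshNotTrans}.
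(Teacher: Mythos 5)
Your proof is correct and follows essentially the same route as the paper's: identify $S^-_{i,k}=\{0\}$ from the Ish-type condition with $1\notin\{i,k\}$, note $0\in S^-_{i,j}$ forces $s>0$, and conclude $s+t\neq 0$. The only (harmless) differences are that you spell out both order-dependent cases of Equation~\eqref{eqn:Sminus} and observe that $s>0$ alone suffices, whereas the paper also invokes $t>0$.
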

	
	\begin{proof}
	Let $\mathcal{A}_\mathbf{S}$ be an Ish-type arrangement. We observe that $S^{-}_{i,j} = \{0\}$ whenever $1 \notin\{i,k\}$. Now let $i,j,k \in [n]$ be distinct with $1 \notin \{i,k\}$ and let $s \notin S_{i,j}^-$ and $t \notin S_{j,k}^-$ be nonnegative integers. Note that $0 \in S_{i,j}^-\cap S_{j,k}^-$, so we must have $s > 0$ and $t > 0$. This means $s + t \notin S_{i,k}^- = \{0\}$, as desired.
	\end{proof}
	
	\begin{lem}\label{lem_extendSequence}
	{ Let $\mathcal{A}_\mathbf{S}$ be almost transitive and let $T \in \mathcal{T}^{(m)}(n)$. Suppose that $(v_1,v_2,\ldots,v_k)$ and $(v_k,v_{k+1},\ldots,v_{k+\ell})$ are $\mathbf{S}$-cadet sequences of $T$. If either $v_k = 1$ or neither sequence contains the node 1, then $(v_1,v_2,\ldots,v_{k+\ell})$ is an $\mathbf{S}$-cadet sequence of $T$.}
	\end{lem}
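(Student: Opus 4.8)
The plan is to unwind the definition of an $\mathbf{S}$-cadet sequence (Definition~\ref{def:S-cadet}) and verify the defining condition for every pair of indices in the concatenated sequence. Since $(v_1,\ldots,v_k)$ and $(v_k,\ldots,v_{k+\ell})$ are already $\mathbf{S}$-cadet sequences, the cadet relations $v_{p}=\cadet(v_{p-1})$ hold throughout, so $(v_1,\ldots,v_{k+\ell})$ is certainly a cadet sequence. What must be checked is that for all $1\leq a<b\leq k+\ell$ we have $\sum_{p=a+1}^{b}\lsib(v_p)\notin S^{-}_{v_a,v_b}$. The pairs with $b\leq k$ are handled by the hypothesis on the first sequence, and the pairs with $a\geq k$ by the hypothesis on the second. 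The only \emph{new} pairs are those that straddle the node $v_k$, i.e.\ $a<k<b$.

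First I would fix such a straddling pair and split the relevant sum at the index $k$: writing $s:=\sum_{p=a+1}^{k}\lsib(v_p)$ and $t:=\sum_{p=k+1}^{b}\lsib(v_p)$, the $\mathbf{S}$-cadet conditions on the two given sequences tell me precisely that $s\notin S^{-}_{v_a,v_k}$ and $t\notin S^{-}_{v_k,v_b}$, while the quantity I must control is $s+t$ and the set is $S^{-}_{v_a,v_b}$. This is exactly the shape of the almost-transitivity condition (Definition~\ref{def:Almost Transitive}) applied to the triple $(i,j,k')=(v_a,v_k,v_b)$: it asserts that $s\notin S^{-}_{v_a,v_k}$ and $t\notin S^{-}_{v_k,v_b}$ force $s+t\notin S^{-}_{v_a,v_b}$. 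The one subtlety is that almost transitivity is only guaranteed when the ``middle'' index $v_k$ is not the distinguished index $1$, and more precisely when $1\notin\{v_a,v_b\}$ — here I must be careful about which position the exceptional index occupies in Definition~\ref{def:Almost Transitive}.

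This is where the hypothesis of the lemma does its work, and I expect it to be the main point to get right. If neither sequence contains the node $1$, then in particular $v_a,v_k,v_b$ are all distinct from $1$, so $1\notin\{v_a,v_b\}$ and almost transitivity applies directly. If instead $v_k=1$, then the ``middle'' node of the triple is $1$, but the distinguished node in Definition~\ref{def:Almost Transitive} is required only to be absent from the \emph{outer} positions $\{i,k\}$; since $v_a,v_b\neq v_k=1$ (the indices of a cadet sequence are distinct), we again have $1\notin\{v_a,v_b\}$, so almost transitivity applies with $j=1$ in the middle. In either case we conclude $s+t\notin S^{-}_{v_a,v_b}$, which is the required condition for the straddling pair.

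Having verified the condition for every pair $1\leq a<b\leq k+\ell$ — the non-straddling pairs from the two hypotheses and the straddling pairs from almost transitivity — I conclude that $(v_1,\ldots,v_{k+\ell})$ is an $\mathbf{S}$-cadet sequence, completing the proof. The only genuine obstacle is the bookkeeping in the previous paragraph: ensuring that the two allowed cases ($v_k=1$ or $1$ absent from both sequences) are exactly the cases in which the exceptional index never lands in an outer slot of the almost-transitivity triple, so that Definition~\ref{def:Almost Transitive} is legitimately invokable.
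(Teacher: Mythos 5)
Your proof is correct and takes essentially the same route as the paper's: the only new pairs to verify are the straddling ones $a < k < b$, and for these almost transitivity applies because the hypothesis (either $v_k = 1$, or $1$ absent from both sequences) guarantees that the node $1$ never occupies an outer position of the triple $(v_a, v_k, v_b)$. The paper's proof is exactly this argument, stated more tersely.
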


	\begin{proof}
	Let $i < k$ and $0 <j \leq \ell$. Then by assumption $\sum_{p = i+1}^k \lsib(v_p) \notin S_{v_i,v_k}^-$ and $\sum_{p = k+1}^{k+j}\lsib(v_p) \notin S_{v_k,v_{k+j}}^-$. Now neither $v_i$ nor $v_{k+j}$ is the node 1. Thus since $\mathcal{A}_\mathbf{S}$ is almost transitive, we have $\sum_{p = i+1}^{k+j}\lsib(v_p) \notin S_{v_i,v_{k+j}}^-$. This implies the result.
	\end{proof}

	\begin{cor}\label{cor:boxesIntersect}
	    Let $\mathcal{A}_\mathbf{S}$ be almost transitive and let $T \in \mathcal{T}^{(m)}(n)$. Suppose there exist positive integers $a,b,c$ and nodes $u_1,\ldots,u_{a+b+c}$ such that both $(u_1,u_2,\ldots,u_{a+b})$ and $(u_a,u_{a+1},\ldots,u_{a+b+c})$ are maximal $\mathbf{S}$-cadet sequences of $T$. Then $1 \in \{u_{a-1},u_{a+b+1}\}$. 
	\end{cor}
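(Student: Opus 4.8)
The plan is to derive everything from the extension Lemma~\ref{lem_extendSequence} together with the two maximality conditions of Definition~\ref{def:maxBox}, using the contrapositive of the lemma as a device for \emph{locating} the node $1$. First I would record the basic structural facts. Since the two given maximal $\mathbf{S}$-cadet sequences overlap in $(u_a,\ldots,u_{a+b})$, a block of $b+1\geq 2$ nodes, they lie in a common maximal cadet sequence, so $(u_1,\ldots,u_{a+b+c})$ is a single cadet sequence and in particular $u_{a+b+1}$ exists as $\cadet(u_{a+b})$. Maximality of $(u_1,\ldots,u_{a+b})$ (condition (b) of Definition~\ref{def:maxBox}) then forces what I will call Fact~1: the sequence $(u_1,\ldots,u_{a+b+1})$ is \emph{not} an $\mathbf{S}$-cadet sequence. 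Throughout I will use that any contiguous subsequence of an $\mathbf{S}$-cadet sequence is again an $\mathbf{S}$-cadet sequence, since the defining inequalities of Definition~\ref{def:S-cadet} are indexed by pairs and only a subset of them survives restriction.

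The contrapositive of Lemma~\ref{lem_extendSequence} reads: if gluing two $\mathbf{S}$-cadet sequences at a common node $v$ does not yield an $\mathbf{S}$-cadet sequence, then $v\neq 1$ and node $1$ appears in one of the two pieces. Applying this at the top boundary, I glue $(u_1,\ldots,u_{a+b})$ to the (contiguous, hence $\mathbf{S}$-cadet) piece $(u_{a+b},u_{a+b+1})$; by Fact~1 the glue must fail, so $u_{a+b}\neq 1$ and either $u_{a+b+1}=1$ (in which case we are done) or $1=u_\ell$ for some $\ell\leq a+b-1$. The crux is then to pin down the position $\ell$. If $a\leq \ell\leq a+b-1$, node $1$ lies inside the overlap, and I would split $(u_1,\ldots,u_{a+b+1})$ as $(u_1,\ldots,u_\ell)$ followed by $(u_\ell,\ldots,u_{a+b+1})$, which are contiguous subsequences of the first and second given sequences respectively; gluing them at $u_\ell=1$ and invoking Lemma~\ref{lem_extendSequence} directly (the hypothesis $v_k=1$ now holds) produces $(u_1,\ldots,u_{a+b+1})$ as an $\mathbf{S}$-cadet sequence, contradicting Fact~1. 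Hence $\ell\leq a-1$, which in particular forces $a\geq 2$ so that $u_{a-1}$ exists.

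It remains to identify $1$ with $u_{a-1}$. Since $a\geq 2$, the node $u_{a-1}$ precedes $u_a$, so maximality of $(u_a,\ldots,u_{a+b+c})$ (condition (a) of Definition~\ref{def:maxBox}) gives Fact~2: $(u_{a-1},\ldots,u_{a+b+c})$ is not an $\mathbf{S}$-cadet sequence. Applying the contrapositive of Lemma~\ref{lem_extendSequence} at the bottom boundary, gluing $(u_{a-1},u_a)$ to $(u_a,\ldots,u_{a+b+c})$, yields $u_{a-1}=1$ or $1\in\{u_{a+1},\ldots,u_{a+b+c}\}$; the latter is impossible because the labels are distinct and we already have $1=u_\ell$ with $\ell\leq a-1$, so $u_{a-1}=1$ as desired. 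I expect the main obstacle to be the middle case of the previous paragraph, namely ruling out node $1$ sitting in the interior of the overlap, since this is exactly where the ``pivot through node~$1$'' application of the lemma itself, rather than its contrapositive, is needed; the secondary bookkeeping obstacle is keeping track of existence, in that when $a=1$ the symbol $u_{a-1}$ is vacuous, but the argument has by then already produced $u_{a+b+1}=1$.
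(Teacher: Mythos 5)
Your proof is correct and follows essentially the same route as the paper's: both apply Lemma~\ref{lem_extendSequence} in contrapositive form to the two boundary extensions $(u_1,\ldots,u_{a+b+1})$ and $(u_{a-1},\ldots,u_{a+b+c})$, which the maximality conditions guarantee are not $\mathbf{S}$-cadet sequences. The differences are only organizational: you treat the top boundary before the bottom (the paper does the reverse), and you spell out the pivot-through-$1$ splitting that rules out node $1$ lying inside the overlap as well as the degenerate case $a=1$, both of which the paper's terser proof compresses into the single assertion that $j = a+b+1$.
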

	
	\begin{proof}
	Since $(u_{a-1},u_a,\ldots,u_{a+b+c})$ is not an $\mathbf{S}$-cadet sequence, Lemma \ref{lem_extendSequence} implies that either $u_{a-1} =1$ or there exists $a < j \leq a+b+c$ such that $u_j = 1$. If $u_{a-1} = 1$ we are done, so suppose there exists such a $j$. Since $(u_1,\ldots,u_{a+b},u_{a+b+1})$ is not an $\mathbf{S}$-cadet sequence, Lemma \ref{lem_extendSequence} then implies that $j = a+b+1$.
	\end{proof}
	
	Now, we can fully characterize the contribution of a tree.
	
	\begin{prop}\label{lem:IshContribution}
	    	Let $\mathcal{A}_\mathbf{S}$ be almost transitive and let $T \in \mathcal{T}^{(m)}(n)$. Then
	the contribution $r_\mathbf{S}(T)$ is nonzero if and only if the following conditions hold:
	\begin{enumerate}
		\item Any maximal cadet sequence of $T$ not containing the node $1$ does not contain any maximal $\mathbf{S}$-cadet sequences of size greater than $1$.
		
		\item In the maximal cadet sequence $(v_1,v_2,\ldots,v_t)$ containing the node $1 =: v_j$,  there exist unique integers $0 \leq i < j$ and $0 \leq k < t-j$ so that:
		\begin{enumerate}
		    \item If $t < j-i-1$ or $t > j + k + 1$, then $\{v_t\}$ is a maximal $\mathbf{S}$-cadet sequence.
		    \item There is a maximal $\mathbf{S}$-cadet sequence $\{v_{j-i},\ldots,v_j,\ldots,v_{j+k}\}$.
		    \item If $j-i \neq 1$, then there is a maximal $\mathbf{S}$-cadet sequence $\{v_{j-i-1}, v_{j-i},\ldots,v_{j-1}\}$. If $j-i = 1$, then $i = 0$.
		    \item If $j+k \neq t$, then there is a maximal $\mathbf{S}$-cadet sequence
		    $\{v_{j+1},\ldots,v_{j+k},v_{j+k+1}\}$. If $j + k = t$, then $k = 0$.
		    \item There are no other maximal $\mathbf{S}$-cadet sequences.
		\end{enumerate}
	\end{enumerate}
			Furthermore, if $T$ satisfies these conditions, then its contribution is given by $r_\mathbf{S}(T) = (-1)^{k+i}$.
	\end{prop}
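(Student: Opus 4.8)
The plan is to compute $r_\mathbf{S}(T)$ by reducing it, via the multiplicativity results of this section, to the contribution of the single $\mathbf{S}$-connected cadet sequence containing the node $1$, and then to run the algorithm of Theorem~\ref{thm:characterization} on that sequence. By Lemma~\ref{lem:contributionProduct}, $r_\mathbf{S}(T)$ is the product of the contributions of the maximal cadet sequences of $T$, so it suffices to determine when each factor is nonzero and to evaluate it. First I would dispose of every maximal cadet sequence $M$ not containing the node $1$: by Corollary~\ref{cor:boxesIntersect} (together with Lemma~\ref{lem_extendSequence}), two distinct maximal $\mathbf{S}$-cadet sequences of $M$ can intersect only if the node $1$ sits at the boundary of their overlap, which is impossible when $1 \notin M$. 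Hence the maximal $\mathbf{S}$-cadet sequences of $M$ are pairwise disjoint, each is its own $\mathbf{S}$-connected cadet sequence, and Corollary~\ref{cor:mustInt} shows that $M$ has nonzero contribution exactly when all of them are singletons, in which case the factor is $1$. This is precisely condition~(1), and it shows these maximal cadet sequences contribute only a sign of $+1$.

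Next I would turn to the maximal cadet sequence $M$ containing $1 = v_j$. Using Lemmas~\ref{rem:connected} and~\ref{lem:contributionProduct2}, I decompose $M$ into its $\mathbf{S}$-connected cadet sequences; repeating the argument above, every component avoiding the node $1$ must be a singleton for its contribution to be nonzero, so the only interesting factor is the contribution of the component $Z$ containing $1$. The key structural step is to pin down $Z$ using almost transitivity. I would first argue that there is a unique maximal $\mathbf{S}$-cadet sequence $C = (v_{j-i},\ldots,v_{j+k})$ containing $v_j$, since two such sequences would overlap with $v_j$ in the interior of their intersection, contradicting Corollary~\ref{cor:boxesIntersect}. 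Then, since any other maximal $\mathbf{S}$-cadet sequence of $Z$ must meet $C$ with the node $1$ adjacent to the overlap, it can only be a lower sequence ending at $v_{j-1}$ or an upper sequence starting at $v_{j+1}$; a second lower (resp.\ upper) sequence would overlap the first away from $v_j$, again contradicting Corollary~\ref{cor:boxesIntersect}. This yields the tripartite description $Z \subseteq L \cup C \cup U$ underlying conditions~(b)--(d).

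Having fixed the shape of $Z$, I would run the algorithm of Theorem~\ref{thm:characterization}, listing the maximal $\mathbf{S}$-cadet sequences of $Z$ in the order $L, C, U$. The decisive computation is the \emph{reaches} relation under the convention $\parent(\text{first node of }Z) = 0$: the initial sequence $X_1$ is reached by $X_0$ precisely when the largest node of $X_1 \setminus X_2$ is the first node of $Z$, i.e.\ when $X_1 \setminus X_2$ is a single node (Remark~\ref{rem:empty}, Corollary~\ref{cor:firstLastSmall}), which forces $L$ to begin exactly one node below $C$; if $L$ starts further down, no sequence is reached by $X_0$ and the algorithm fails, giving contribution $0$. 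Symmetrically, the chain can terminate at the top sequence only when $U$ ends exactly one node above $C$, since otherwise the final sequence has its relevant parent inside itself and is unreachable. When these adjacency conditions hold—conditions~(c) and~(d)—the chain visits each of the (at most three) maximal $\mathbf{S}$-cadet sequences of $Z$ in turn, so the algorithm outputs $t = k'$ and Theorem~\ref{thm:characterization} gives $r_\mathbf{S}(Z) = (-1)^{|Z| - k'}$; a short count of $|Z|$ in each of the cases $i = 0$, $k = 0$, and $i,k \geq 1$ shows $|Z| - k' = i + k$. Multiplying the factor $(-1)^{i+k}$ from $Z$ against the factors of $+1$ from all other components yields $r_\mathbf{S}(T) = (-1)^{i+k}$, and uniqueness of $i, k$ follows from uniqueness of $C$.

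I expect the main obstacle to be the structural analysis of $Z$ in the second paragraph—proving that almost transitivity forces exactly the lower/central/upper configuration with no further overlapping sequences—together with the careful bookkeeping of the \emph{reaches} relation in the third paragraph, where the convention $\parent(\text{first node of }Z) = 0$ is exactly what produces the ``one node below/above'' requirements. Handling the degenerate cases ($i = 0$, $k = 0$, or the node $1$ at the top or bottom of $M$) uniformly within this framework is where I anticipate the bulk of the routine but error-prone casework.
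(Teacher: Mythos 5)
Your proposal is correct and takes essentially the same route as the paper's proof: condition (1) via Corollaries~\ref{cor:mustInt} and~\ref{cor:boxesIntersect}, the lower/central/upper structure of maximal $\mathbf{S}$-cadet sequences around the node $1$ via Corollary~\ref{cor:boxesIntersect}, the adjacency conditions (2c)--(2d) via the \emph{reaches} relation (Remark~\ref{rem:empty}, Corollary~\ref{cor:firstLastSmall}), and the sign $(-1)^{i+k}$ via the algorithm of Theorem~\ref{thm:characterization}. The only difference is bookkeeping: you run the algorithm on the $\mathbf{S}$-connected component containing $1$ (invoking Lemma~\ref{lem:contributionProduct2}), whereas the paper applies it to the full maximal cadet sequence containing $1$, chaining through the outlying singletons---your version is, if anything, slightly more faithful to the hypotheses of Theorem~\ref{thm:characterization}.
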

	
	\begin{proof}
	Let $T \in \mathcal{T}^{(m)}(n)$ have nonzero contribution. Then each of its maximal cadet sequences must contribute $\pm 1$ by Lemma \ref{lem:contributionProduct}. By Corollary \ref{cor:mustInt}, this means any maximal $\mathbf{S}$-cadet sequence of size greater than 1 intersects another maximal $\mathbf{S}$-cadet sequence, and so condition (1) is a direct consequence of Corollary \ref{cor:boxesIntersect}.
	
	Now consider the maximal cadet sequence $X = (v_1,v_2,\ldots,v_t)$ containing $1 =: v_j$. Since we have already shown (1), we note that the contribution of $X$ is equal to the contribution of $T$.
	
	Suppose $X$ has maximal $\mathbf{S}$-cadet sequences $(X_1,X_2,\ldots,X_{k'})$ in increasing order of index of last node. As usual, define $X_0 = \{0\} = X_{k'+1}$. Since $X$ is the union of its maximal $\mathbf{S}$-cadet sequences, there exists some maximal $\mathbf{S}$-cadet sequence $X_r$ which can be written $X_r = \{v_{j-i},\ldots,v_j,\ldots,v_{j+k}\}$ for some $0 \leq i \leq j$ and $0 \leq k \leq t-k$. Thus the tree $T$ satisfies (2b). Now by Corollary \ref{cor:boxesIntersect}, we see that if a maximal $\mathbf{S}$-cadet sequence $X_s$ intersects $X_r$, then either the last node of $X_s$ is $v_{j-1}$ or the first node of $X_s$ is $v_{j+1}$. This means $X_{r-1}$ intersects $X_r$ if and only if either $X_{r-1}$ has size greater than 1$, X_{r+1}$ intersects $X_r$ if and only if $X_{r+1}$ has size greater than 1, and no other maximal $\mathbf{S}$-cadet sequence intersects $X_r$. Moreover, if $s < r-1$ or $s > r+1$, then $X_{s}$ does not intersect any other maximal $\mathbf{S}$-cadet sequence, and thus has size one. We have therefore shown that $T$ satisfies~(2a).
	
	If $j-i = 1$ (so that $r = 1$), then Corollary \ref{cor:firstLastSmall} implies that $X_{r+1}\setminus X_r = \{v_j\}$ and hence $i = 0$. Likewise, if $j+k = t$ (so that $r = k'$), then Corollary \ref{cor:firstLastSmall} implies that $X_{r}\setminus X_{r-1} = \{v_j\}$ and hence $k = 0.$
	
	We now show that if $r \neq 1$ then the first node of $X_{r-1}$ is $v_{j-i-1}$ and if $r \neq t$ then the last node of $X_{r+1}$ is $v_{j + k + 1}$. Suppose $r \neq 1$ and observe that, for any $s$, if $X_s$ and $X_{s+1}$ both have size 1, then $X_s$ reaches $X_{s+1}$ and does not reach any other maximal $\mathbf{S}$-cadet sequence. Thus by Theorem \ref{thm:characterization}, we have a sequence $(X_0,X_1,\ldots,X_{r-2},X_{r-1})$ where each maximal $\mathbf{S}$-cadet sequence reaches the next. Thus the largest indexed node in $X_{r-1}\setminus X_r$ is $v_{i-j-1}$ as desired. The result for $X_{r+1}$ is completely analogous, observing that $X_{r}$ must reach $X_{r+1}$.
	
	We have thus shown (2c) and (2d). Condition (2e) then follows immediately from the observation that the only possible maximal $\mathbf{S}$-cadet sequences of size greater than 1 are those in (2b), (2c), and~(2d).
	
	Now suppose $T$ satisfies (1) and (2). We then see that the contribution of $T$ is equal to the contribution of the maximal cadet sequence containing 1 and that this contribution is nonzero. Indeed, using the notation as above we see that $(X_0,X_1,\ldots,X_{k'})$ is a valid output of the algorithm in Theorem \ref{thm:characterization}. Furthermore, we have that $k' = (j-i-1) + 1 + (t-k-j)$. Thus by Theorem \ref{thm:characterization}, the contribution of $T$ is
	$$r_{\mathbf{S}}(T) = (-1)^{t-(j-i+t-k-j)} = (-1)^{i + k}.$$
	\end{proof}
	
	    \begin{defn}\label{not:ell}
	        Let $\mathcal{A}_\mathbf{S}$ be almost transitive and let $T \in \mathcal{T}^{(m)}(n)$ have nonzero contribution. Let $i$ and $k$ be as in the statement of Proposition \ref{lem:IshContribution}. Then we refer to $L_l(T) := i$ and $L_u(T):=k$ as the \emph{lower 1-length} and \emph{upper 1-length} of $T$, respectively.
	    \end{defn}
	    
	    As an example, the tree shown in Figure \ref{fig:inefficient} has $L_l(T) = 1$ and $L_u(T) = 0$. (See Notation \ref{not:s} and Example \ref{ex:inefficient} below for an explanation of the caption.)  We note the the lower and upper 1-length of a (rooted labeled plane) tree depend on the hyperplane arrangement $\mathcal{A}_\mathbf{S}$, even though this is not reflected in our notation.

	\section{1-length and Inefficiency}\label{sec:IshParams}
	In this section, we work exclusively with Ish-type arrangements. We recall from Lemma \ref{lem:IshAlmost Transitive} that these arrangements are almost transitive. We therefore study the conditions outlined in Proposition \ref{lem:IshContribution} in detail. We do so using the notions of upper and lower 1-length and two additional parameters, which we define now.
	
	\begin{defn}\label{def:inefficient}
	    Let $\mathcal{A}_\mathbf{S}$ be an Ish-type arrangement and Let $T \in \mathcal{T}^{(m)}(n)$ and let $(v_1,\ldots,v_j,\ldots,v_t)$ be the maximal cadet sequence of $T$ which contains the node $1=:v_j$. We say a node $w$ of $T$ is \emph{lower inefficient} if $w$ is a left sibling of 1 and $\lsib(w) \notin S_{w,1}^-$. Likewise, we say $w$ is \emph{upper inefficient} if $\parent(w) = 1$, $w$ is not the cadet of 1, and $\lsib(w) \notin S_{1,w}^-$. We denote by $E_l(T)$ and $E_u(T)$ the number of lower and upper inefficient nodes in $T$, respectively. We refer to these values as the \emph{lower inefficiency} and \emph{upper inefficiency} of $T$.
	\end{defn}
	
We emphasize that, as with lower and upper 1-length, the lower and upper inefficiency of a (rooted labeled plane) tree depend on the hyperplane arrangement $\mathcal{A}_\mathbf{S}$, even though this is not reflected in our notation.
	
	\begin{nota}\label{not:s}
	    We denote by $\mathbf{S}(e_l,\ell_l,e_u,\ell_u)$ the set of trees $T \in \mathcal{T}^{(m)}(n)$ with nonzero contribution so that $E_l(T) = e_l, L_l(T) = \ell_l$, $E_u(T) = e_u$, and $L_u(T) = \ell_u$. We note that $\mathbf{S}(e_l,\ell_l,e_u,\ell_u)$ is necessarily empty if $e_l+\ell_l+e_u+\ell_u \geq n$.
	\end{nota}
	
\begin{ex}\label{ex:inefficient}
	Let $\mathcal{A}_\mathbf{S}$ be the nested Ish arrangement with $n = 8$ and each $S_{1,j} = \{-j+1,\ldots,j-1\}$. Then the tree in Figure \ref{fig:inefficient} is an element of $\mathbf{S}(1,1,0,2)$. The maximal $\mathbf{S}$-cadet sequences of size larger than 1 are $\{7,5\}$ and $\{5,1\}$, the upper inefficient nodes are 2 and 3, and the lower inefficient node is 4.
\end{ex}

	\begin{figure}
	\centering
	\begin{tikzpicture}
		\draw (0,0)--(0,1);
		\draw (0,0)--(-1,1);
		\draw (0,1)--(1.75,2);
		\draw (0,1)--(1.25,2);
		\draw (0,1)--(0.5,2);
		\draw (0,1)--(-1.25,2);
		\draw (0,1)--(-0.5,2);
		\draw (0,1)--(0,2);
		\draw (1.75,2)--(1.75,3);
		\draw (1.75,2)--(2.5,3);
		\draw (1.75,2)--(3.25,3);
		\draw (1.75,2)--(1,3);
		\draw (1.75,2)--(0.5,3);
		\node[draw,circle,fill=white] at (0,0) {$7$};
	    \node[draw,circle,fill=white] at (0,1) {$5$};
	    \node[draw,circle,fill=white] at (1.75,3) {$2$};
	    \node[draw,circle,fill=white] at (0.5,2) {$4$};
	    \node[draw,circle,fill=white] at (1.75,2) {$1$};
	    \node[draw,circle,fill=white] at (2.5,3) {$3$};
	    \node[draw,circle,fill=white] at (3.25,3) {$6$};
	    \node[draw,circle,fill=white] at (-0.5,2) {$8$};
	\end{tikzpicture}
	\caption{A tree in $\mathbf{S}(1,1,0,2)$, where $\mathcal{A}_\mathbf{S}$ is the nested Ish arrangement with $n = 8$ and each $S_{1,j} = \{-j+1,\ldots,j-1\}$.}\label{fig:inefficient}.
	\end{figure}
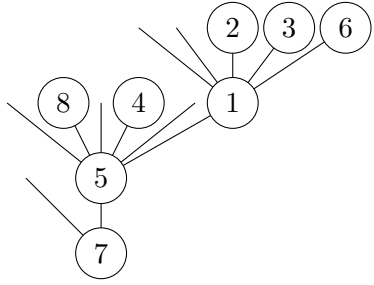
	
    We now describe a series of sign-reversing involutions in order to reduce Bernardi's formula to the enumeration of the trees in $\mathbf{S}(0,0,0,0)$. In particular, we will show that  the trees with lower 1-length $\ell_l \neq 0$ (resp. upper 1-length $\ell_u \neq 0$) and lower (resp. upper) inefficiency 0 are in bijection with those of lower  1-length $\ell_l - 1$ (resp. upper 1-length $\ell_u-1$) and nonzero lower (resp. upper) inefficiency.
    
    We first fix some $e_l,\ell_l,e_u,\ell_u$ with $\ell_l \neq 0$ and let $T \in \mathbf{S}(e_l,\ell_l,e_u,\ell_u)$. We denote $(v_1,\ldots,v_j,\ldots,v_t)$ the maximal cadet sequence of $T$ containing the node $1=: v_j$. Note that by Proposition \ref{lem:IshContribution}, the fact that $L_l(T) = \ell_l \neq 0$ implies that $j - \ell_l \neq 1$. We now construct a new (rooted labeled plane) tree $\phi_l(T)$ as follows:
    \begin{enumerate}
        \item Delete all of the right siblings of the node $v_{j-1}$ (which are necessarily leaves), and denote by $s_1,s_2,\ldots,s_{\lsib(1)}$ the left siblings of the node 1 (including leaves), indexed left to right.
        \item Let $c$ be the leftmost child of $v_{j-2}$. (Note that $c$ may or may not be a node). Replace the edge from $v_{j-2}$ to $c$ with a pair of edges from $v_{j-2}$ to a new node $v_{j-1}'$ and from $v_{j-1}'$ to $c$.
        \item For each $i \in [\lsib(1)]$ in order, delete the edge from $v_{j-1}$ to $s_i$ and draw an edge from $v_{j-2}$ to $s_i$ so that $s_i$ has $i-1$ left siblings.
        \item Delete the node $v_{j-1}$ and all incident edges, relabel $v_{j-1}'$ by $v_{j-1}$, and draw an edge from $v_{j-2}$ to $1$ so that $1$ is the rightmost child of $v_{j-2}$.
        \item Add and delete leaves as rightmost children where necessary so that every node has $m+1$ children.
    \end{enumerate}
    The idea behind this construction is to move $v_{j-1}$ from being in a maximal $\mathbf{S}$-cadet sequence with 1 to being a lower inefficient node.
    An example is shown in Figure \ref{ex:involution1}.

	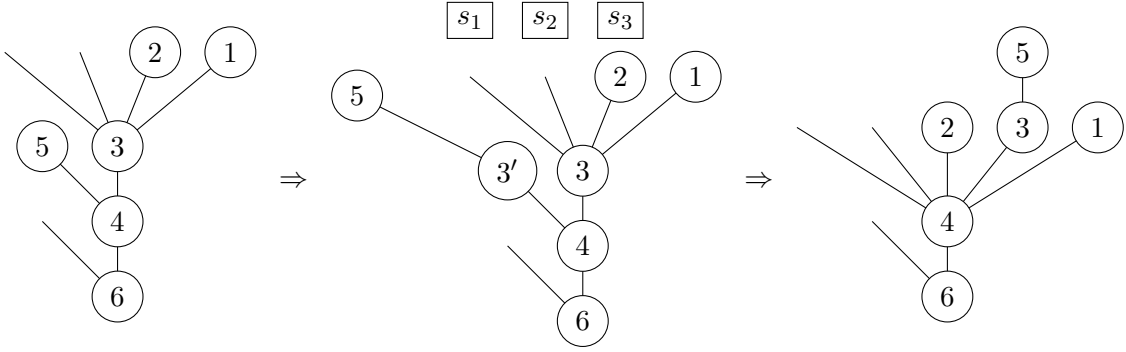
\begin{figure}
	\centering
   	$\begin{array}{l}\begin{tikzpicture}
		\draw (0,0)--(0,1);
		\draw (0,0)--(-1,1);
		\draw (0,1)--(0,2);
		\draw (0,1)--(-1,2);
		\draw (0,2)--(-1.5,3.25);	\draw (0,2)--(-0.5,3.25);
		\draw (0,2)--(0.5,3.25);
		\draw (0,2)--(1.5,3.25);
		\node[draw,circle,fill=white] at (0,0) {$6$};
		\node[draw,circle,fill=white] at (0,1) {$4$};
		\node[draw,circle,fill=white] at (-1,2) {$5$};
		\node[draw,circle,fill=white] at (0,2) {$3$};
		\node[draw,circle,fill=white] at (0.5,3.25) {$2$};
		\node[draw,circle,fill=white] at (1.5,3.25) {$1$};
	\end{tikzpicture}\end{array}$
	$\Rightarrow$
		$\begin{array}{l}\begin{tikzpicture}
		\draw (0,0)--(0,1);
		\draw (0,0)--(-1,1);
		\draw (0,1)--(0,2);
		\draw (0,1)--(-1,2);
		\draw (0,2)--(-1.5,3.25);	\draw (0,2)--(-0.5,3.25);
		\draw (0,2)--(0.5,3.25);
		\draw (0,2)--(1.5,3.25);
		\draw (-1,2)--(-3,3);
		\node[draw,circle,fill=white] at (0,0) {$6$};
		\node[draw,circle,fill=white] at (0,1) {$4$};
		\node[draw,circle,fill=white] at (-1,2) {$3'$};
		\node[draw,circle,fill=white] at (-3,3) {$5$};
		\node[draw,circle,fill=white] at (0,2) {$3$};
		\node[draw,circle,fill=white] at (0.5,3.25) {$2$};
		\node[draw,circle,fill=white] at (1.5,3.25) {$1$};
		\node[draw] at (-1.5,4) {$s_1$};
	    \node[draw] at (-0.5,4) {$s_2$};
	    \node[draw] at (0.5,4) {$s_3$};
	\end{tikzpicture}\end{array}$
	$\Rightarrow$
	$\begin{array}{l}	\begin{tikzpicture}
		\draw (0,0)--(0,1);
		\draw (0,1)--(2,2.25);
		\draw (0,1)--(1,2.25);
		\draw (0,1)--(0,2.25);
		\draw (0,1)--(-1,2.25);
        \draw (1,3.25)--(1,2.25);
        \draw (0,0)--(-1,1);
        \draw (0,1)--(-2,2.25);
		\node[draw,circle,fill=white] at (0,0) {$6$};
	    \node[draw,circle,fill=white] at (0,1) {$4$};
	    \node[draw,circle,fill=white] at (1,2.25) {$3$};
	    \node[draw,circle,fill=white] at (1,3.25) {$5$};
	    \node[draw,circle,fill=white] at (2,2.25) {$1$};
	    \node[draw,circle,fill=white] at (0,2.25) {$2$};
	\end{tikzpicture}\end{array}$
	\caption{An example of the map $\phi_l$ applied to a tree in $\mathbf{S}(1,2,0,0)$, where $\mathcal{A}_\mathbf{S}$ is the Ish arrangement with $n = 6$. All leaves which do not have any nodes as left siblings have been omitted for clarity. We note that the final tree is an element of $\mathbf{S}(2,1,0,0)$.}\label{ex:involution1}
	\end{figure}

    \begin{lem}\label{lem:invo1}
        Let $T \in \mathbf{S}(e_l,\ell_l,e_u,\ell_u)$. Then $\phi_l(T)\in \bigcup_{b = e_l+1}^{n-1} \mathbf{S}(b,\ell_l-1,e_u,\ell_u).$
    \end{lem}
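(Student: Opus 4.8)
The plan is to verify directly that $\phi_l(T)$ is a well-defined element of $\mathcal{T}^{(m)}(n)$ with nonzero contribution and then to compute its four statistics, showing that $L_l$ drops by one while $L_u$ and $E_u$ are unchanged and $E_l$ strictly increases. Throughout I would use Proposition \ref{lem:IshContribution} to record the maximal $\mathbf{S}$-cadet sequences of $T$ explicitly: writing $1 = v_j$, the central one is $(v_{j-\ell_l},\ldots,v_{j-1},1,v_{j+1},\ldots,v_{j+\ell_u})$, the lower one (present because $\ell_l \neq 0$ forces $j-\ell_l \neq 1$ via condition (2c)) is $(v_{j-\ell_l-1},\ldots,v_{j-1})$, the upper one is $(v_{j+1},\ldots,v_{j+\ell_u+1})$, and all others are singletons. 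Well-definedness, including Step 5 restoring the $m+1$-children condition, is routine.

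The heart of the argument is a bookkeeping of left-sibling counts. First I would note that $\phi_l$ leaves $\lsib(v_p)$ unchanged for $p \leq j-2$, leaves each $s_i$ with exactly $i-1$ left siblings (this is precisely what Step 3 arranges), and does not touch the children of $1$ at all. The one nontrivial count is $\lsib_{\phi_l(T)}(1)$: the new left siblings of $1$ are the relocated $s_1,\ldots,s_{\lsib_T(1)}$, the left siblings of the old $v_{j-1}$ other than its leftmost child $c$ (which is reattached beneath $v_{j-1}'$), and the node $v_{j-1}'$ itself, so that
\[ \lsib_{\phi_l(T)}(1) = \lsib_T(1) + \lsib_T(v_{j-1}). \]
From this I would deduce the crucial sum-preservation identity: for every $a \leq j-2$, the value of $\sum \lsib$ taken along the path from $v_a$ up to $1$ in $\phi_l(T)$ (which now skips $v_{j-1}$) equals $\sum_{p=a+1}^{j}\lsib_T(v_p)$, the corresponding sum in $T$. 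Hence the $\mathbf{S}$-cadet status of every pair involving $1$ is unchanged, and for pairs avoiding $1$ this is immediate since nothing below $v_{j-2}$ moves.

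With sum-preservation in hand the structural consequences follow. The central maximal $\mathbf{S}$-cadet sequence of $\phi_l(T)$ becomes exactly $(v_{j-\ell_l},\ldots,v_{j-2},1,v_{j+1},\ldots,v_{j+\ell_u})$ — it has lost precisely $v_{j-1}$ from its lower half — so $L_l(\phi_l(T)) = \ell_l-1$ and $L_u(\phi_l(T)) = \ell_u$. The lower maximal $\mathbf{S}$-cadet sequence becomes $(v_{j-\ell_l-1},\ldots,v_{j-2})$, and to confirm it is still downward-maximal I would use that $\lsib_T(v_{j-\ell_l-1}) = 0$ is forced in $T$ (else the lower sequence of $T$ could be extended), a fact surviving in $\phi_l(T)$. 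The new node $v_{j-1}'$ and the relocated node-siblings $s_i$ start only singleton maximal $\mathbf{S}$-cadet sequences and do not disturb the central maximal cadet sequence, so Proposition \ref{lem:IshContribution}(1) still holds; verifying (2a)--(2e) then certifies nonzero contribution. Since $1$'s children are untouched, $E_u(\phi_l(T)) = e_u$.

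Finally, for the inefficiency count, each relocated $s_i$ keeps its left-sibling count $i-1$ and its label, hence its lower-inefficiency status, so the $e_l$ lower-inefficient siblings of $T$ persist. The new contribution comes from $v_{j-1}'$: by construction it is a left sibling of $1$ with $\lsib_{\phi_l(T)}(v_{j-1}') = \lsib_T(1)$, and since $(v_{j-1},1)$ being an $\mathbf{S}$-cadet pair in $T$ means precisely $\lsib_T(1) \notin S^-_{v_{j-1},1}$, the node $v_{j-1}'$ (which carries the label $v_{j-1}$) is lower-inefficient in $\phi_l(T)$. This yields $E_l(\phi_l(T)) \geq e_l+1$, and the trivial bound $E_l \leq n-1$ (node $1$ is never lower-inefficient) completes the claim. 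The main obstacle I anticipate is exactly the left-sibling bookkeeping: one must be careful about the placement and persistence of $v_{j-1}$'s original siblings to obtain both the sum-preservation identity and the value $\lsib_{\phi_l(T)}(v_{j-1}') = \lsib_T(1)$, since everything else follows cleanly once these two counts are pinned down.
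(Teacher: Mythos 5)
Your proposal follows the same route as the paper's proof: the left-sibling bookkeeping, the identification of the maximal $\mathbf{S}$-cadet sequences of $\phi_l(T)$ with those of $T$ with $v_{j-1}$ removed, the persistence of the old lower inefficient nodes, and the observation that the new node is lower inefficient precisely because $\lsib_T(1) \notin S^-_{v_{j-1},1}$ are all exactly the paper's steps, and your sum-preservation identity is a clean formalization of what the paper leaves as ``straightforward to verify.'' However, there is one genuine gap: you dismiss well-definedness as routine, and it is precisely the one step that is not. For $\phi_l(T)$ to lie in $\mathcal{T}^{(m)}(n)$, the node $1$ must end up with at most $m$ left siblings; by your own computation it acquires $\lsib_T(1)+\lsib_T(v_{j-1})$ of them, and a priori this sum can be as large as $2m$. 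Step 5 of the construction cannot repair an overflow: it only adds or deletes leaves as \emph{rightmost} children, and every offending vertex lies to the left of $1$. Without a bound on this sum, $\phi_l(T)$ need not be in $\mathcal{T}^{(m)}(n)$ at all, and the lemma collapses.

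The paper closes exactly this gap with a short but essential argument. Since $\ell_l \neq 0$, Proposition \ref{lem:IshContribution}(2c) provides the lower maximal $\mathbf{S}$-cadet sequence $\{v_{j-\ell_l-1},\ldots,v_{j-1}\}$; since $\{v_{j-\ell_l-1},\ldots,v_j\}$ is \emph{not} an $\mathbf{S}$-cadet sequence while both the lower and the central sequences are, the only pair that can fail is $(v_{j-\ell_l-1},v_j)$, whence
\begin{equation*}
\sum_{p=j-\ell_l}^{j}\lsib_T(v_p) \in S^-_{v_{j-\ell_l-1},1}.
\end{equation*}
Every element of an $S^-$ set is at most $m$, and since $\ell_l \geq 1$ this sum dominates $\lsib_T(v_{j-1})+\lsib_T(1)$, giving the needed bound $\lsib_{\phi_l(T)}(1) \leq m$. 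Note this is also where the hypothesis $\ell_l \neq 0$ does real work beyond guaranteeing $j-\ell_l \neq 1$, which is the only use your write-up makes of it. Everything else in your proposal is correct and aligned with the paper's argument.
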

    
    \begin{proof}
        We first claim that $\phi_l(T)$ is a well-defined member of $\mathcal{T}^{(m)}(n)$. Indeed, we have $\lsib_{\phi_l(T)}(1) = \lsib_T(1) + \lsib_T(v_{j-1})$, $\lsib_{\phi_l(T)}(v_{j-1}) = \lsib_T(1)$, and $\lsib_{\phi_l(T)}(w) = \lsib_T(w)$ for all other nodes $w$. Now recall from Proposition~\ref{lem:IshContribution}(2) that both $\{v_{j-\ell_l-1},\ldots,v_{j-1}\}$ and $\{v_{j-\ell},\ldots,v_j\}$ are $\mathbf{S}$-cadet sequences, but $\{v_{j-\ell_l-1},\ldots,v_j\}$ is not. This means
        $$\sum_{p = j-\ell_l}^j \lsib_T(v_p) \in S_{v_{j-\ell_l-1},1}^-.$$
       This sum gives an upper bound on $\lsib_T(1) + \lsib_T(v_{j-1})$, which means 1 (and hence every node) has at most $m$ left siblings in $\phi_l(T)$. We conclude that $\phi_l(T) \in \mathcal{T}^{(m)}(n)$, as desired.
       
       Now note that $v_{j-1}$ is lower inefficient in $\phi_l(T)$ since $\{v_{j-1},v_j\}$ is an $\mathbf{S}$-cadet sequence of $T$. Moreover, if a left sibling $s_i$ of $1$ in $T$ is a node, then $s_i$ is still a left sibling of 1 in $\phi_l(T)$ with $\lsib_{\phi_l(T)}(s_i) = \lsib_{T}(s_i)$. This means $s_i$ is lower inefficient in $T$ if and only if it is lower inefficient in $\phi_l(T)$. We conclude that $E_l(\phi_l(T)) > E_l(T) = e_l$.
       
       We now observe that $(v_1,\ldots,v_{j-2},v_j,\ldots,v_t)$ is a maximal cadet sequence of $\phi_l(T)$. It is straightforward to verify that the maximal $\mathbf{S}$-cadet sequences of this cadet sequence are precisely those of $(v_1,\ldots,v_t)$ in $T$ with the node $v_{j-1}$ removed. This means $L_l(\phi_l(T)) = L_l(T) - 1 = \ell_l-1$ and $L_u(\phi_l(T) = L_u(T) = \ell_u$. Moreover, if $j+\ell_u \neq t$, then the left siblings of $v_{j+\ell_u+1}$ are the same in both $T$ and $\phi_l(T)$. This implies that $E_u(\phi_l(T)) = E_u(T) = e_u$.
       
       It remains to show that no other maximal cadet sequence of $\phi_l(T)$ contains a maximal $\mathbf{S}$-cadet sequence of size larger than 1. Indeed, consider the maximal cadet sequence of $\phi_l(T)$ which contains $v_{j-1}$. We note that $v_{j-1}$ is a left sibling of 1, and hence is not the cadet of any node. Moreover, if $v_{j-1}$ has a cadet, then this cadet must be $c$ and we have $\lsib_{\phi_l(T)}(c) = 0 \in S_{v_{j-1},c}^-$ by construction. As no other nodes have their cadets or number of left siblings changed in moving from $T$ to $\phi_l(T)$, this implies the result.
    \end{proof}
    
Now let $T \in \cup_{b = e_l+1}^{n-1} \mathbf{S}(b,\ell_l-1,e_u,\ell_u)$, and let $(v_1,\ldots,v_j,\ldots,v_t)$ be the maximal cadet sequence of $T$ containing the node $1=:v_j$. Since $E_l(T) \neq 0$, it must be that $j \neq 1$. Now construct a new tree $\psi_l^{e_l}(T)$ as follows:
\begin{enumerate}
    \item Let $w$ be the $(e_l + 1)$-th lower inefficient node of $T$ (from the left). Denote by $s_1,s_2,\ldots,s_{\lsib(w)}$ the left siblings of $w$, indexed left to right.
    \item Let $c$ be the leftmost child of $w$. (Note that $c$ may or may not be a node.) Delete all right siblings of $c$ (necessarily leaves), and replace the edges from $v_{j-1}$ to $w$ and from $w$ to $c$ with an edge from $v_{j-1}$ to $c$.
    \item Delete the node $w$ and all incident edges and replace the edge from $v_{j-1}$ to 1 with edges from $v_{j-1}$ to a new node again labeled $w$ and this new node $w$ to 1.
    \item Delete the edges from $v_{j-1}$ to $s_1,\ldots,s_{\lsib(w)}$ and draw edges from $w$ to $s_1,\ldots,s_{\lsib(w)}$ so that the left siblings of 1 are $s_1,\ldots,s_{\lsib(w)}$ in that order.
    \item Add and delete leaves as rightmost children where necessary so that every node has $m+1$ children.
\end{enumerate}

An example of this construction is shown in Figure \ref{ex:involution2}. We emphasize that the parameter $e_l$ must be specified in the definition of $\psi_l^{e_l}$. Indeed, this will allow us to accurately describe the bijections in our proof of Theorem \ref{thm:involution}.

        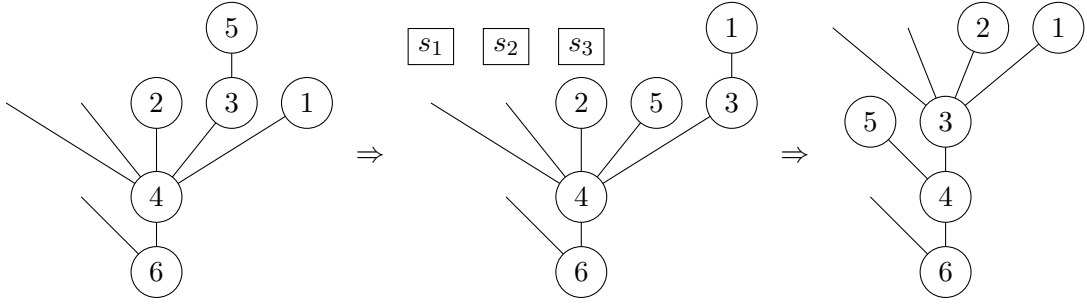
\begin{figure}
        \centering
            $\begin{array}{l}	\begin{tikzpicture}
		\draw (0,0)--(0,1);
		\draw (0,1)--(2,2.25);
		\draw (0,1)--(1,2.25);
		\draw (0,1)--(0,2.25);
		\draw (0,1)--(-1,2.25);
        \draw (1,3.25)--(1,2.25);
        \draw (0,0)--(-1,1);
        \draw (0,1)--(-2,2.25);
		\node[draw,circle,fill=white] at (0,0) {$6$};
	    \node[draw,circle,fill=white] at (0,1) {$4$};
	    \node[draw,circle,fill=white] at (1,2.25) {$3$};
	    \node[draw,circle,fill=white] at (1,3.25) {$5$};
	    \node[draw,circle,fill=white] at (2,2.25) {$1$};
	    \node[draw,circle,fill=white] at (0,2.25) {$2$};
	\end{tikzpicture}\end{array}$
	$\Rightarrow$ 
	$\begin{array}{l}	\begin{tikzpicture}
		\draw (0,0)--(0,1);
		\draw (0,1)--(2,2.25);
		\draw (0,1)--(1,2.25);
		\draw (0,1)--(0,2.25);
		\draw (0,1)--(-1,2.25);
        \draw (2,3.25)--(2,2.25);
        \draw (0,0)--(-1,1);
        \draw (0,1)--(-2,2.25);
		\node[draw,circle,fill=white] at (0,0) {$6$};
	    \node[draw,circle,fill=white] at (0,1) {$4$};
	    \node[draw,circle,fill=white] at (1,2.25) {$5$};
	    \node[draw,circle,fill=white] at (2,3.25) {$1$};
	    \node[draw,circle,fill=white] at (2,2.25) {$3$};
	    \node[draw,circle,fill=white] at (0,2.25) {$2$};
	    \node[draw] at (-2,3) {$s_1$};
	    \node[draw] at (-1,3) {$s_2$};
	    \node[draw] at (0,3) {$s_3$};
	\end{tikzpicture}\end{array}$
	$\Rightarrow$ $\begin{array}{l}\begin{tikzpicture}
		\draw (0,0)--(0,1);
		\draw (0,0)--(-1,1);
		\draw (0,1)--(0,2);
		\draw (0,1)--(-1,2);
		\draw (0,2)--(-1.5,3.25);	\draw (0,2)--(-0.5,3.25);
		\draw (0,2)--(0.5,3.25);
		\draw (0,2)--(1.5,3.25);
		\node[draw,circle,fill=white] at (0,0) {$6$};
		\node[draw,circle,fill=white] at (0,1) {$4$};
		\node[draw,circle,fill=white] at (-1,2) {$5$};
		\node[draw,circle,fill=white] at (0,2) {$3$};
		\node[draw,circle,fill=white] at (0.5,3.25) {$2$};
		\node[draw,circle,fill=white] at (1.5,3.25) {$1$};
	\end{tikzpicture}\end{array}$
	\caption{An example of the map $\psi_l^1$ applied to a tree in $\mathbf{S}(2,1,0,0)$, where $\mathcal{A}_\mathbf{S}$ is the Ish arrangement with $n = 6$. All leaves which do not have any nodes as left siblings have been omitted for clarity. We note that this gives an inverse to the construction in Figure~\ref{ex:involution1}.}\label{ex:involution2}
	\end{figure}

\begin{lem}\label{lem:invo2}
    Let $T \in \bigcup_{b = e_l+1}^{n-1} \mathbf{S}(b,\ell_l-1,e_u,\ell_u)$. Then $\psi_l^{e_l}(T)\in\mathbf{S}(e_l,\ell_l,e_u,\ell_u)$.
\end{lem}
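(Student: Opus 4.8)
The plan is to show that $\psi_l^{e_l}$ is a well-defined inverse to $\phi_l$, which (by Lemma~\ref{lem:invo1}) forces $\psi_l^{e_l}(T)$ to land in $\mathbf{S}(e_l,\ell_l,e_u,\ell_u)$. First I would verify directly that $\psi_l^{e_l}(T)$ is a legitimate element of $\mathcal{T}^{(m)}(n)$. The node $w$ chosen in step (1) is the $(e_l+1)$-th lower inefficient node, which exists precisely because $E_l(T) = b \geq e_l + 1$. Tracking how the left siblings are redistributed in steps (3)--(4), I would compute that $\lsib_{\psi_l^{e_l}(T)}(1) = \lsib_T(w)$ and that the node formerly playing the role of $w$ becomes a new cadet of $v_{j-1}$. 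The bound $\lsib(1) \leq m$ in $\psi_l^{e_l}(T)$ follows from the fact that $w$ was a \emph{left sibling} of $1$ in $T$, so $\lsib_T(w) < \lsib_T(1) \leq m$.

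Next I would identify the new maximal $\mathbf{S}$-cadet sequences. The key point is that $w$ is lower inefficient in $T$, meaning $\lsib_T(w) \notin S_{w,1}^-$, which is exactly the condition ensuring that after step (3)--(4) the pair $\{w,1\}$ becomes a genuine $\mathbf{S}$-cadet sequence in $\psi_l^{e_l}(T)$ (so $w$ now plays the role of $v_{j-1}$). This increases the lower $1$-length by one, giving $L_l(\psi_l^{e_l}(T)) = (\ell_l - 1) + 1 = \ell_l$. I would then argue that removing $w$ from the pool of left siblings of $1$ decreases $E_l$ by exactly one: the remaining lower inefficient nodes of $T$ are unaffected (their left-sibling counts are preserved), and $w$ itself is no longer a left sibling of $1$, so $E_l(\psi_l^{e_l}(T)) = b - 1$. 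The specification of $e_l$ in the map is what pins down $b - 1 = e_l$, since we selected the $(e_l+1)$-th such node. The upper-side parameters $E_u$ and $L_u$ are untouched because none of the nodes to the upper side of $1$, nor their left-sibling counts, are altered by the construction.

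The cleanest way to organize all of this is to prove that $\psi_l^{e_l}$ and $\phi_l$ are mutually inverse as maps between $\mathbf{S}(e_l,\ell_l,e_u,\ell_u)$ and $\bigcup_{b=e_l+1}^{n-1}\mathbf{S}(b,\ell_l-1,e_u,\ell_u)$, mirroring the examples in Figures~\ref{ex:involution1} and~\ref{ex:involution2}. Concretely, I would check that applying $\phi_l$ to $\psi_l^{e_l}(T)$ moves the newly created $v_{j-1}$-role node back out to become the $(e_l+1)$-th lower inefficient node, recovering $T$; the left-sibling bookkeeping from the previous paragraph is precisely reversed. Since Lemma~\ref{lem:invo1} already establishes that $\phi_l$ sends $\mathbf{S}(e_l,\ell_l,e_u,\ell_u)$ into the correct union, exhibiting $\psi_l^{e_l}$ as a two-sided inverse immediately yields that its image is exactly $\mathbf{S}(e_l,\ell_l,e_u,\ell_u)$, which is the claim.

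The main obstacle I anticipate is the careful left-sibling accounting, especially confirming that the single node $w$ relocated by $\psi_l^{e_l}$ is exactly the one that $\phi_l$ would have produced, and that no \emph{other} maximal cadet sequence inadvertently acquires a maximal $\mathbf{S}$-cadet sequence of size greater than $1$ (which would violate Proposition~\ref{lem:IshContribution}(1) and push $\psi_l^{e_l}(T)$ out of having nonzero contribution). This is the analogue of the last paragraph of the proof of Lemma~\ref{lem:invo1}, and I expect it to reduce to checking that the new cadet $c$ of $w$ satisfies $\lsib(c) = 0 \in S_{w,c}^-$ by construction, so that $\{w,c\}$ is never an $\mathbf{S}$-cadet sequence. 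Once that local check is in place, the rest is routine verification that the two constructions undo each other step by step.
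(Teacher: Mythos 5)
There is a genuine error in your key accounting step. You claim that $\psi_l^{e_l}$ simply ``removes $w$ from the pool of left siblings of $1$,'' so that $E_l(\psi_l^{e_l}(T)) = b-1$, and you then assert that the choice of the $(e_l+1)$-th inefficient node ``pins down $b-1 = e_l$.'' Both claims are wrong. Step (4) of the construction makes the left siblings of $1$ in $\psi_l^{e_l}(T)$ be exactly $s_1,\ldots,s_{\lsib_T(w)}$, the \emph{former left siblings of $w$}; the former left siblings of $1$ lying strictly between $w$ and $1$ remain children of $v_{j-1}$ but are no longer siblings of $1$ at all (since $1$ is now a child of the re-inserted node $w$), so every lower inefficient node of $T$ to the \emph{right} of $w$ ceases to be lower inefficient. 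Because $w$ is the $(e_l+1)$-th lower inefficient node from the left, exactly $e_l$ of them survive, giving $E_l(\psi_l^{e_l}(T)) = e_l$ --- not $b-1$. This distinction is the entire point of the lemma: the map must send \emph{every} fiber $\mathbf{S}(b,\ell_l-1,e_u,\ell_u)$ with $e_l+1 \leq b \leq n-1$ into the \emph{single} set $\mathbf{S}(e_l,\ell_l,e_u,\ell_u)$. Under your accounting, a tree with $b > e_l+1$ would land in $\mathbf{S}(b-1,\ell_l,e_u,\ell_u) \neq \mathbf{S}(e_l,\ell_l,e_u,\ell_u)$, i.e., your argument would ``prove'' the lemma false; the identity $b-1=e_l$ you invoke to escape this is simply not available, since $b$ ranges over the whole union.

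Your proposed organization --- establishing that $\phi_l$ and $\psi_l^{e_l}$ are two-sided inverses and letting Lemma \ref{lem:invo1} do the work --- also cannot replace the direct count. Even granting $\phi_l\circ\psi_l^{e_l} = \mathrm{id}$, Lemma \ref{lem:invo1} applied to $\psi_l^{e_l}(T)$ only yields the inequality $E_l(T) > E_l(\psi_l^{e_l}(T))$; it does not determine $E_l(\psi_l^{e_l}(T))$, so the first parameter remains unpinned. (This is why the paper proves the containment directly and defers the composition identities $\psi_l^{E_l(T)}\circ\phi_l = \mathrm{id}$ and $\phi_l\circ\psi_l^{i} = \mathrm{id}$, $0 \leq i \leq E_l(T)-1$, to the proof of Theorem \ref{thm:involution}.) The remainder of your outline --- well-definedness in $\mathcal{T}^{(m)}(n)$, the fact that $\lsib_T(w)\notin S_{w,1}^-$ makes $\{w,1\}$ an $\mathbf{S}$-cadet sequence so that $L_l$ increases by one, preservation of the upper parameters, and the check that no other maximal cadet sequence acquires a maximal $\mathbf{S}$-cadet sequence of size greater than $1$ --- does match the paper's argument, but the inefficiency count is the heart of the lemma and must be redone as above.
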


\begin{proof}
    We first claim that $\psi_l^{e_l}(T)$ is a well-defined member of $\mathcal{T}^{(m)}(n)$. Indeed, we have that $\lsib_{\psi_l^{e_l}(T)}(1) = \lsib_T(w)$, $\lsib_{\psi_l^{e_l}(T)}(w) = \lsib_T(1) - \lsib_T(w)$, and $\lsib_{\psi_T}(w') = \lsib_T(w')$ for all other nodes $w'$. We conclude that every node has at most $m$ left siblings in $\psi_l^{e_l}(T)$ and thus $\psi_l^{e_l}(T) \in \mathcal{T}^{(m)}(n)$.
    
     We next observe that $(v_1,\ldots,v_{j-1},w,v_j,\ldots,v_t)$ is a maximal cadet sequence of $\psi_l^{e_l}(T)$. It is straightforward to show that the maximal $\mathbf{S}$-cadet sequences of this cadet sequence are precisely those of $(v_1,\ldots,v_j,\ldots,v_t)$ in $T$ with $u$ appended to the maximal $\mathbf{S}$-cadet sequence which ends with $v_{j-1}$ and to the maximal $\mathbf{S}$-cadet sequence which contains $1 = v_j$. This means $L_l(\psi_l^{e_l}(T)) = L_l(T)+1 = \ell_l$ and $L_u(\psi_l^{e_l}(T)) = L_u(T) = \ell_u$. Moreover, if $j+\ell_u \neq t$, then the left siblings of $v_{j+\ell_u+1}$ are the same in both $T$ and $\psi_l^{e_l}(T)$. This implies that $E_u(\psi_l^{e_l}(T)) = E_u(T) = e_u$.
    
    Now, since no other nodes have their cadets or number of left siblings changed in moving from $T$ to $\psi_l^{e_l}(T)$, all other maximal $\mathbf{S}$-cadet sequences of $\psi_l^{e_l}(T)$ have size 1. Furthermore, the left siblings of 1 in $\psi_l^{e_l}(T)$ are precisely those of $u$ in $T$ (in the same order). In particular, the lower inefficient nodes in $\psi_l^{e_l}(T)$ correspond precisely with those lower inefficient nodes of $T$ which are left of $v$, of which there are $e_l$. This implies the result.
\end{proof}

We now give ``upper analogues'' to the constructions $\phi_l$ and $\psi_l^{e_l}$. We remark that the proofs of
Lemmas \ref{lem:invo3} and \ref{lem:invo4} below are similar to those of Lemmas \ref{lem:invo1} and \ref{lem:invo2}.

Again fix some $e_l,\ell_l,e_u,\ell_l$, now with $\ell_u \neq 0$, and let $T \in \mathbf{S}(e_l,\ell_l,e_u,\ell_u)$. We denote by $(v_1,\ldots,v_j,\ldots,v_t)$ the maximal cadet sequence of $T$ containing $1=:v_j$. Note that by Proposition \ref{lem:IshContribution}, the fact that $L_u(T) = \ell_u \neq 0$ implies that $j+\ell_u \neq t$.
    We now construct a new tree $\phi_u(T)$ as follows:
    \begin{enumerate}
        \item Delete all right siblings of the node $v_{j+1}$ (necessarily leaves) and denote the left siblings of the node $v_{j+2}$ by $s_1,\ldots,s_{\lsib(v_{j+2})}$, indexed left to right.
        \item For each $i \in \{2,\ldots,\lsib(v_{j+2})\}$ in order, delete the edge from $v_{j+1}$ to $s_i$ and draw an edge from $1$ to $s_i$ so that $s_i$ has $\lsib_T(v_{j+1}) + i - 1$ left siblings. (That is, at each step $s_i$ is the rightmost child of 1.) Likewise, delete the edge from $v_{j+1}$ to $v_{j+2}$ and draw an edge from $1$ to $v_{j+2}$ so that $v_{j+2}$ is the rightmost child of 1.
        \item Add and delete leaves as rightmost children where necessary so that every node has $m+1$ children.
    \end{enumerate}
    The idea behind the construction is to move $v_{j+1}$ from being in a maximal $\mathbf{S}$-cadet sequence with 1 to being an upper inefficient node. An example is show in Figure \ref{ex:involution3}.

	\begin{figure}
	\centering
	   $\begin{array}{l}\begin{tikzpicture}
		\draw (0,0)--(-1.5,1);
		\draw (0,0)--(-0.5,1);
		\draw (0,0)--(0.5,1);
		\draw (0,0)--(1.5,1);
		\draw (1.5,1)--(0.5,2);
		\draw (1.5,1)--(1.5,2);
		\draw (1.5,1)--(2.5,2);
		\node[draw,circle,fill=white] at (0,0) {$1$};
		\node[draw,circle,fill=white] at (-1.5,1) {$3$};
		\node[draw,circle,fill=white] at (-0.5,1) {$2$};
		\node[draw,circle,fill=white] at (1.5,1) {$6$};
		\node[draw,circle,fill=white] at (0.5,2) {$4$};
		\node[draw,circle,fill=white] at (2.5,2) {$5$};
	\end{tikzpicture}\end{array}$
		\begin{tikzcd} \phantom{1}\arrow[r,"\textnormal{\large$\phi_u$}",yshift=0.1cm]&\phantom{1}\arrow[l,"\textnormal{\large$\psi_u^1$}",yshift=-0.1cm]\end{tikzcd}
  	$\begin{array}{l}\begin{tikzpicture}
		\draw (0,0)--(-2.5,1);
		\draw (0,0)--(-1.5,1);
		\draw (0,0)--(-0.5,1);
		\draw (0,0)--(0.5,1);
		\draw (0,0)--(1.5,1);
		\draw (0,0)--(2.5,1);
        \draw (0.5,1)--(0.5,2);
		\node[draw,circle,fill=white] at (0,0) {$1$};
		\node[draw,circle,fill=white] at (-2.5,1) {$3$};
		\node[draw,circle,fill=white] at (-1.25,1) {$2$};
		\node[draw,circle,fill=white] at (0.5,1) {$6$};
		\node[draw,circle,fill=white] at (0.5,2) {$4$};
		\node[draw,circle,fill=white] at (2.5,1) {$5$};
	\end{tikzpicture}\end{array}$
	\caption{An example of the maps $\phi_u$ and $\psi_u^1$, where $\mathcal{A}_\mathbf{S}$ is the nested Ish arrangement with $n = 6$ and $S_{1,j} = \{-j+1,\ldots,0\}$ for each $j$. The tree on the left is an element of $\mathbf{S}(0,0,1,1)$ and the tree on the right is an element of $\mathbf{S}(0,0,2,0)$.}\label{ex:involution3}
	\end{figure}
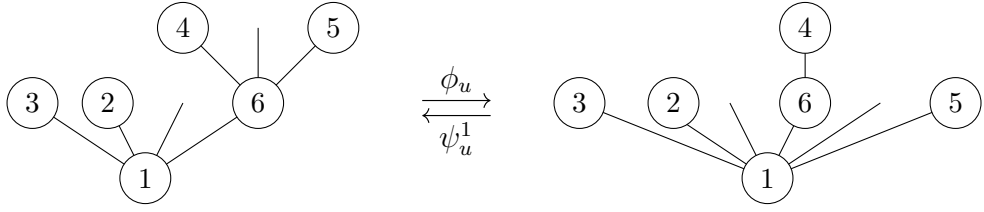

    \begin{lem}\label{lem:invo3}
        Let $T \in \mathbf{S}(e_l,\ell_l,e_u,\ell_u)$. Then $\phi_u(T)\in\bigcup_{b = e_u+1}^{n-1} \mathbf{S}(e_l,\ell_l,b,\ell_u-1)$.
    \end{lem}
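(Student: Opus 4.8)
The plan is to mirror the proof of Lemma~\ref{lem:invo1}, checking four things in turn: that $\phi_u(T)$ is a legitimate element of $\mathcal{T}^{(m)}(n)$; that $v_{j+1}$ becomes an upper inefficient node while $E_l$, $L_l$, and the old upper inefficient nodes are unaffected; that the maximal $\mathbf{S}$-cadet sequence through $1$ loses exactly the node $v_{j+1}$, so that $L_u$ drops by one; and that no other maximal cadet sequence of $\phi_u(T)$ acquires a maximal $\mathbf{S}$-cadet sequence of size greater than one. Together these place $\phi_u(T)$ in some $\mathbf{S}(e_l,\ell_l,b,\ell_u-1)$ with $b\geq e_u+1$ and, via Proposition~\ref{lem:IshContribution}, certify that its contribution is nonzero.

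First I would record the left-sibling bookkeeping. Writing $s_1,\ldots,s_{\lsib(v_{j+2})}$ for the children of $v_{j+1}$ up to and including its cadet $v_{j+2}$, the construction leaves $\lsib(v_{j+1})$ and the left siblings of $1$ untouched, sets $\lsib_{\phi_u(T)}(s_i)=\lsib_T(v_{j+1})+i-1$, and gives $\lsib_{\phi_u(T)}(v_{j+2})=\lsib_T(v_{j+1})+\lsib_T(v_{j+2})$, while all other nodes keep their counts. To see that no node exceeds $m$ left siblings, I would invoke Proposition~\ref{lem:IshContribution}(2d): since $\ell_u\neq0$ forces $j+\ell_u\neq t$, both $(v_{j+1},\ldots,v_{j+\ell_u+1})$ and $(v_j,\ldots,v_{j+\ell_u})$ are $\mathbf{S}$-cadet sequences while $(v_j,\ldots,v_{j+\ell_u+1})$ is not, so
\[
\lsib_T(v_{j+1})+\lsib_T(v_{j+2})\;\leq\;\sum_{p=j+1}^{j+\ell_u+1}\lsib_T(v_p)\;\in\;S_{1,v_{j+\ell_u+1}}^-,
\]
and every element of an $S^-$ set is at most $m$. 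This bounds the largest new count, that of $v_{j+2}$, by $m$ (the relocated $s_i$ have strictly fewer left siblings).

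Next I would identify the four parameters. Because $1=v_j$ and its parent $v_{j-1}$ are untouched, the left siblings of $1$, and hence the lower inefficient nodes, are exactly as in $T$, giving $E_l(\phi_u(T))=e_l$. The node $v_{j+1}$ now has $\parent(v_{j+1})=1$, is no longer the cadet of $1$ (that role passes to $v_{j+2}$), and satisfies $\lsib_{\phi_u(T)}(v_{j+1})=\lsib_T(v_{j+1})\notin S_{1,v_{j+1}}^-$ since $(1,v_{j+1})$ is an $\mathbf{S}$-cadet sequence of $T$; thus $v_{j+1}$ is upper inefficient, and since the old upper inefficient nodes all lie to the left of the former cadet $v_{j+1}$ and keep their left-sibling counts, they survive, so $E_u(\phi_u(T))\geq e_u+1$. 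For $L_u$ and $L_l$, the key observation is that the new main cadet sequence is $(v_1,\ldots,v_j,v_{j+2},\ldots,v_t)$ and that, because $\lsib_{\phi_u(T)}(v_{j+2})=\lsib_T(v_{j+1})+\lsib_T(v_{j+2})$, every left-sibling sum between a node $v_a$ with $a\leq j$ and a node $v_b$ with $b\geq j+2$ is unchanged from $T$. Hence the $\mathbf{S}$-cadet condition for each such pair is identical to that in $T$, and the maximal $\mathbf{S}$-cadet sequences of the new chain are precisely those of $(v_1,\ldots,v_t)$ with $v_{j+1}$ deleted, yielding $L_l(\phi_u(T))=\ell_l$ and $L_u(\phi_u(T))=\ell_u-1$.

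Finally I would rule out spurious large $\mathbf{S}$-cadet sequences. The only nodes whose cadet or left-sibling data changed are $v_{j+1}$, the $s_i$, and $v_{j+2}$; the latter already lies on the analyzed main chain. Each relocated $s_i$ and the node $v_{j+1}$ is now a non-cadet child of $1$, hence the first node of its maximal cadet sequence, with subtree inherited verbatim from $T$ (where it lay in a maximal cadet sequence avoiding $1$, so contained no large maximal $\mathbf{S}$-cadet sequence by Proposition~\ref{lem:IshContribution}(1)); its cadet, if any, has $0\in S^-$ left siblings, so it forms a singleton $\mathbf{S}$-box. I expect the main obstacle to be exactly this bookkeeping of left-sibling sums, in particular verifying cleanly that contracting $v_{j+1}$ out of the chain preserves every straddling sum and therefore the entire maximal $\mathbf{S}$-cadet sequence decomposition. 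Once that is in hand, Proposition~\ref{lem:IshContribution} converts the recovered decomposition directly into the claimed membership $\phi_u(T)\in\bigcup_{b=e_u+1}^{n-1}\mathbf{S}(e_l,\ell_l,b,\ell_u-1)$.
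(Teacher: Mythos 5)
Your proposal is correct and follows essentially the same route as the paper's proof: the same left-sibling bookkeeping and the same bound $\sum_{p=j+1}^{j+\ell_u+1}\lsib_T(v_p)\in S_{1,v_{j+\ell_u+1}}^-$ for well-definedness, the same identification of $v_{j+1}$ as a new upper inefficient node with the old ones preserved, the same deletion correspondence for the maximal $\mathbf{S}$-cadet sequences of the main chain, and the same singleton argument for the displaced subtrees. In fact you spell out two points the paper leaves as ``straightforward to verify'' (the invariance of straddling left-sibling sums and the handling of the relocated $s_i$); the only blemish is the phrase ``up to and including its cadet $v_{j+2}$,'' which conflicts with your own (correct) formulas, since the $s_i$ should be the left siblings of $v_{j+2}$ only.
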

    
    \begin{proof}
        We first claim that $\phi_u(T)$ is a member of $\mathcal{T}^{(m)}(n)$. Indeed, we observe that $\lsib_{\phi_u(T)}(v_{j+2}) = \lsib_T(v_{j+1}) + \lsib_T(v_{j+2})$, $\lsib_{\phi_u(T)}(w) = \lsib_{T}(w) + \lsib_T(v_{j+1})$ is $w$ is a left sibling (but not the leftmost sibling) of $v_{j+2}$, and $\lsib_{\phi_u(T)}(w') = \lsib_T(w')$ for all other nodes $w$. Now recall from Proposition~\ref{lem:IshContribution}(2) that both $\{v_{j},\ldots,v_{j+\ell_u}\}$ and $\{v_{j+1},\ldots,v_{j+\ell_u+1}\}$ are $\mathbf{S}$-cadet sequences, but $\{v_{j},\ldots,v_{j+\ell_u+1}\}$ is not. This means
        $$\sum_{p = j+1}^{j+\ell_u+1} \lsib_T(v_p) \in S_{1,v_{j+\ell_u+1}}^-.$$
       This sum gives an upper bound on $\lsib_T(v_{j+1}) + \lsib_T(v_{j+2})$, which means $v_{j+1}$ (and hence every node) has at most $m$ left siblings in $\phi_u(T)$. We conclude that $\phi_u(T) \in \mathcal{T}^{(m)}(n)$, as desired.
       
      We now observe that $(v_1,\ldots,v_{j},v_{j+2},\ldots,v_t)$ is a maximal cadet sequence of $\phi_u(T)$. It is straightforward to verify that the valid $\mathbf{S}$-boxings of $(v_1,\ldots,v_{j},v_{j+2},\ldots,v_t)$ in $\phi_u(T)$ are precisely those of $(v_1,\ldots,v_t)$ in $T$ with the node $v_{j+1}$ removed. In particular, $(v_{j-\ell_l},\ldots,v_{j},v_{j+2},\ldots,v_{j+\ell_u})$ is the maximal $\mathbf{S}$-cadet sequence of $\phi_u(T)$ which contains $1 = v_j$. This means $L_u(\phi_u(T)) = L_u(T) - 1 = \ell_u-1$ and $L_l(\phi_u(T) = L_l(T) = \ell_l$. Moreover, if $j-\ell_l \neq 1$, the left siblings of $1 = v_j$ are the same in both $T$ and $\phi_u(T)$. This means $E_l(\phi_u(T)) = E_l(T) = e_l$.
       
     Now since  $\{v_j,v_{j+1}\}$ is an $\mathbf{S}$-cadet sequence of $T$, we see that $v_{j+\ell_u}$ is upper inefficient in $\phi_u(T)$. Moreover, if $s_i$ is a left sibling of $v_{j+1}$ in $T$ which is a node, then $s_i$ is still a left sibling of $v_{j+1}$ in $\phi_u(T)$ with $\lsib_{\phi_u(T)}(s_i) = \lsib_{T}(s_i)$. This means $s_i$ is upper inefficient in $T$ if and only if it is upper inefficient in $\phi_u(T)$. We conclude that $E_u(\phi_u(T)) > E_u(T) = e_u$.
       
       It remains to show that no other maximal cadet sequence of $\phi_u(T)$ contains a maximal $\mathbf{S}$-cadet sequence of size larger than 1. Indeed, consider the maximal cadet sequence of $\phi_u(T)$ which contains $v_{j+1}$. We note that $v_{j+1}$ is a left sibling of $v_{j+2}$, and hence is not the cadet of any node. Moreover, if $v_{j+1}$ has a cadet, then this cadet was the leftmost child of $v_{j+1}$ in $T$ and has no left siblings in $\phi_u(T)$ by construction. As no other nodes have their cadets or number of left siblings changed in moving from $T$ to $\phi_l(T)$, this implies the result.
    \end{proof}
    
Now let $T \in \cup_{b = e_u+1}^{n-1} \mathbf{S}(e_l,\ell_l,b,\ell_u-1)$. Let $(v_1,\ldots,v_j,\ldots,v_t)$ be the maximal cadet sequence of $T$ containing $1 =: v_j$. Note that $E_u(T) > 0$ implies that $j \neq t$. Now construct a new tree $\psi_u^{e_u}(T)$ as follows:
\begin{enumerate}
        \item Let $w$ be the $(e_u+1)$-th upper inefficient node of $T$ (from the right). Delete all but the leftmost child of $w$.
        \item Fix $\lambda = \lsib(v_{j+1})-\lsib(w)-1$ and denote the vertices which have $w$ as a left sibling and $v_{j+1}$ as a right sibling by $s_1,\ldots,s_{\lambda}$, indexed left to right.
        \item For each $i \in [\lambda]$ in order, delete the edge from 1 to $s_i$ and draw an edge from $w$ to $s_i$ so that $s_i$ has $i$ left siblings. Likewise, delete the edge from 1 to $v_{j+1}$ and draw an edge from $w$ to $v_{j+1}$ so that $w$ has $\lambda+1$ left siblings.
        \item Add and delete leaves as right siblings where necessary so that every node has $m+1$ children.
\end{enumerate}

As before, we emphasize that the parameter $e_l$ must be specified in the definition of $\psi_l^{e_l}$. An example is shown in Figure \ref{ex:involution3}.

\begin{lem}\label{lem:invo4}
    Let $T \in \bigcup_{b = e_u+1}^{n-1}\mathbf{S}(e_l,\ell_l,b,\ell_u-1)$. Then $\psi_u^{e_u}(T)\in\mathbf{S}(e_l,\ell_l,e_u,\ell_u)$.
\end{lem}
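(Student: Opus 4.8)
The plan is to mirror the proof of Lemma~\ref{lem:invo2}, exchanging the roles of ``lower'' and ``upper'' and using that an Ish-type arrangement is almost transitive (Lemma~\ref{lem:IshAlmost Transitive}). Throughout, write $(v_1,\ldots,v_j,\ldots,v_t)$ for the maximal cadet sequence of $T$ containing $1=v_j$, so that $v_{j+1}=\cadet(1)$, and let $w$ be the upper inefficient node selected in the construction. The effect of $\psi_u^{e_u}$ is to relocate $w$ into this cadet sequence so that $\cadet_{\psi_u^{e_u}(T)}(1)=w$ and $\cadet_{\psi_u^{e_u}(T)}(w)=v_{j+1}$, thereby increasing the upper $1$-length by one at the cost of one upper inefficient node. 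I would verify the four parameters and the nonvanishing of the contribution in turn.

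First I would check that $\psi_u^{e_u}(T)$ is a well-defined member of $\mathcal{T}^{(m)}(n)$. The point specific to this construction is that Step~(1) may only delete leaves. Since $T$ has nonzero contribution, Proposition~\ref{lem:IshContribution}(1) forces every maximal $\mathbf{S}$-cadet sequence inside the maximal cadet sequence beginning at $w$ to be a singleton; as the arrangement is Ish-type and this sequence avoids $1$, each relevant pair $\{u,\cadet(u)\}$ fails to be $\mathbf{S}$-cadet exactly when $\lsib(\cadet(u))=0$. Hence $\cadet(w)$ is the leftmost child of $w$, so $w$ has at most one non-leaf child, and deleting its other children removes only leaves. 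For the sibling counts I would record $\lsib_{\psi_u^{e_u}(T)}(w)=\lsib_T(w)$ and $\lsib_{\psi_u^{e_u}(T)}(v_{j+1})=\lsib_T(v_{j+1})-\lsib_T(w)$, all other counts being unchanged or equal to some $i\le\lambda$; since $w$ lies strictly left of $v_{j+1}$ among the children of $1$, we have $0<\lsib_T(w)<\lsib_T(v_{j+1})\le m$, so every node has at most $m$ left siblings.

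Next I would analyze the maximal cadet sequence $(v_1,\ldots,v_j,w,v_{j+1},\ldots,v_t)$ of $\psi_u^{e_u}(T)$ and identify its maximal $\mathbf{S}$-cadet sequences. The key computation is that, for an Ish-type arrangement, every pair of nodes in this sequence not involving $1$ has $S^-=\{0\}$, while for a pair of the form $(1,v_{j+p})$ the intermediate sum $\lsib_{\psi_u^{e_u}(T)}(w)+\lsib_{\psi_u^{e_u}(T)}(v_{j+1})+\sum_{q\ge 2}\lsib_T(v_{j+q})$ telescopes back to the old sum $\sum_{q=1}^{p}\lsib_T(v_{j+q})$, so its membership in $S_{1,v_{j+p}}^-$ is unchanged. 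Combining this with $\lsib_T(w)\notin S_{1,w}^-$ (so $\lsib_T(w)>0$) and $\lsib_T(v_{j+1})-\lsib_T(w)>0$, I would conclude that the maximal $\mathbf{S}$-cadet sequences of the new sequence are exactly those of $(v_1,\ldots,v_t)$ in $T$ with $w$ inserted just after $1$ into the sequence containing $1$ and prepended to the upper neighbor $\{v_{j+1},\ldots,v_{j+\ell_u}\}$. In particular the sequence containing $1$ acquires one extra node above $1$, giving $L_u(\psi_u^{e_u}(T))=\ell_u$, while the structure below $1$ is untouched, giving $L_l(\psi_u^{e_u}(T))=\ell_l$.

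Finally I would track the inefficiencies and exclude stray contributions. As $\parent(1)$ and the left siblings of $1$ are untouched, $E_l$ is preserved. For $E_u$, the construction makes $w$ the cadet of $1$ (so $w$ is no longer upper inefficient) and moves every child of $1$ strictly between $w$ and $v_{j+1}$ to become a child of $w$; the upper inefficient nodes on that side of $w$ are thereby absorbed, those on the other side remain children of $1$ with unchanged $\lsib$, and no new upper inefficient nodes are created. Since $w$ is chosen so that exactly $e_u$ upper inefficient nodes remain children of $1$, this yields $E_u(\psi_u^{e_u}(T))=e_u$. The only maximal cadet sequence of $\psi_u^{e_u}(T)$ not already present in $T$ is the one beginning at the former leftmost child of $w$, and it inherits the all-singleton property from $w$'s old maximal cadet sequence, so Proposition~\ref{lem:IshContribution}(1) continues to hold. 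Thus $\psi_u^{e_u}(T)$ satisfies the hypotheses of Proposition~\ref{lem:IshContribution} with lower and upper $1$-lengths $\ell_l$ and $\ell_u$, hence has nonzero contribution and lies in $\mathbf{S}(e_l,\ell_l,e_u,\ell_u)$. I expect the main obstacle to be the bookkeeping of the third paragraph: confirming that inserting $w$ alters the family of maximal $\mathbf{S}$-cadet sequences in precisely the claimed way (so that $L_u$ increases by exactly one and no more), while simultaneously verifying that the absorbed children account for exactly the right change in $E_u$.
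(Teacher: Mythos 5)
Your proposal is correct and follows essentially the same route as the paper's proof: verify $\psi_u^{e_u}(T) \in \mathcal{T}^{(m)}(n)$, identify the maximal $\mathbf{S}$-cadet sequences of the new maximal cadet sequence $(v_1,\ldots,v_j,w,v_{j+1},\ldots,v_t)$ as exactly those of $T$ with $w$ adjoined to the box containing $1$ and to the box beginning at $v_{j+1}$, and then track the four parameters. You even supply details the paper labels ``straightforward'' (that step (1) deletes only leaves, and the telescoping of the left-sibling sums); the only caveat is that your count of the surviving upper inefficient nodes, exactly like the paper's own proof, tacitly reads the construction's choice of $w$ as the $(e_u+1)$-th such node from the \emph{left} rather than ``from the right'' as literally written in the construction.
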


 \begin{proof}
        We first claim that $\psi_u^{e_u}(T)$ is a well-defined member of $\mathcal{T}^{(m)}(n)$. Indeed, we have that $\lsib_{\psi_u^{e_u}(T)}(v_{j+1}) =  \lsib_T(v_{j+1}) - \lsib_T(w)$ and $\lsib_{\phi_u(T)}(w') = \lsib_T(w')$ for all other nodes $w'$. We conclude that every node has at most $m$ left siblings in $\phi_u(T)$ and $\phi_u(T) \in \mathcal{T}^{(m)}(n)$, as desired.
       
      We now observe that $(v_1,\ldots,v_{j},w,v_{j+1},\ldots,v_t)$ is a maximal cadet sequence of $\psi_u^{e_u}(T)$. It is straightforward to verify that the valid $\mathbf{S}$-boxings of this sequence are precisely those of $(v_1,\ldots,v_t)$ in $T$ with the node $w$ appended to the maximal $\mathbf{S}$-cadet sequence which starts with $v_{j+1}$ and to the maximal $\mathbf{S}$-cadet sequence which contains $1 = v_j$. This means $L_u(\psi_u^{e_u}(T)) = L_u(T) + 1 = \ell_u$ and $L_l(\phi_u(T) = L_l(T) = \ell_l$. Moreover, if $j-\ell_l \neq 1$, the left siblings of $v_{j-\ell_l-1}$ are the same in both $T$ and $\psi_u^{e_u}(T)$. This means $E_l(\psi_u^{e_u}(T)) = E_l(T) = e_l$. 
       
       Now, since no other nodes have their cadets or number of left siblings changed in moving from $T$ to $\psi_u^{e_u}(T)$, all other maximal $\mathbf{S}$-cadet sequences of $\psi_u^{e_u}(T)$ are of size 1. Furthermore, the left siblings of $w$ in $\psi_u^{e_u}(T)$ are precisely those of $w$ in $T$ (in the same order). In particular, the upper inefficient nodes in $\psi_u^{e_u}(T)$ correspond precisely with those upper inefficient nodes of $T$ which are left of $w$, of which there are $e_u$. This implies the result.
\end{proof}

We now prove the main result of this section.

	\begin{thm}[Theorem \ref{thmB}]\label{thm:involution}
	    Let $\mathcal{A}_\mathbf{S}$ be an Ish-type arrangement. Then the regions of $\mathcal{A}_\mathbf{S}$ are equinumerous with the trees in $\mathcal{T}^{(m)}(n)$ with lower length, lower inefficiency, upper length, and upper inefficiency all 0. That is,
	    $r_{\mathbf{S}} = |\mathbf{S}(0,0,0,0)|.$
	\end{thm}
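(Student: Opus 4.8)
The plan is to start from the Bernardi formula in the form $r_{\mathbf{S}} = \sum_{T \in \mathcal{T}^{(m)}(n)} r_{\mathbf{S}}(T)$ and discard the trees of zero contribution. Since $\mathcal{A}_{\mathbf{S}}$ is almost transitive (Lemma~\ref{lem:IshAlmost Transitive}), Proposition~\ref{lem:IshContribution} applies; combined with Definitions~\ref{not:ell}, \ref{def:inefficient}, and \ref{not:s}, it shows that every tree of nonzero contribution lies in exactly one set $\mathbf{S}(e_l,\ell_l,e_u,\ell_u)$ and contributes $(-1)^{\ell_l+\ell_u}$. Hence
\[
r_{\mathbf{S}} = \sum_{e_l,\ell_l,e_u,\ell_u \geq 0} (-1)^{\ell_l+\ell_u}\,\bigl|\mathbf{S}(e_l,\ell_l,e_u,\ell_u)\bigr|,
\]
a finite sum, since $\mathbf{S}(e_l,\ell_l,e_u,\ell_u) = \emptyset$ once $e_l+\ell_l+e_u+\ell_u \geq n$.

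Next I would upgrade Lemmas~\ref{lem:invo1}--\ref{lem:invo4} to genuine bijections. For fixed $e_l,e_u,\ell_u$ and $\ell_l \geq 1$, I claim $\phi_l$ and $\psi_l^{e_l}$ are mutually inverse maps between $\mathbf{S}(e_l,\ell_l,e_u,\ell_u)$ and $\bigcup_{b=e_l+1}^{n-1}\mathbf{S}(b,\ell_l-1,e_u,\ell_u)$, and similarly $\phi_u,\psi_u^{e_u}$ for the upper parameters. Granting this, counting elements yields the two recursions
\[
\bigl|\mathbf{S}(e_l,\ell_l,e_u,\ell_u)\bigr| = \sum_{b > e_l}\bigl|\mathbf{S}(b,\ell_l-1,e_u,\ell_u)\bigr|,\qquad
\bigl|\mathbf{S}(e_l,\ell_l,e_u,\ell_u)\bigr| = \sum_{b > e_u}\bigl|\mathbf{S}(e_l,\ell_l,b,\ell_u-1)\bigr|,
\]
valid whenever $\ell_l\geq 1$ (on the left) or $\ell_u\geq 1$ (on the right), respectively.

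With the recursions in hand the alternating sums telescope, realizing the desired cancellation. Fixing $e_u,\ell_u$ and abbreviating $c_{e,\ell} := \bigl|\mathbf{S}(e,\ell,e_u,\ell_u)\bigr|$ and $G_e := \sum_{\ell \geq 0}(-1)^\ell c_{e,\ell}$, the left recursion gives $G_e = c_{e,0} - \sum_{b>e}G_b$, equivalently $\sum_{b \geq e} G_b = c_{e,0}$; evaluating at $e=0$ collapses the lower parameters:
\[
\sum_{e_l,\ell_l}(-1)^{\ell_l}\bigl|\mathbf{S}(e_l,\ell_l,e_u,\ell_u)\bigr| = \bigl|\mathbf{S}(0,0,e_u,\ell_u)\bigr|.
\]
Substituting this into the expression for $r_{\mathbf{S}}$ and running the identical telescoping on the right recursion (now with $e_l=\ell_l=0$ fixed) collapses the upper parameters as well, leaving $r_{\mathbf{S}} = \bigl|\mathbf{S}(0,0,0,0)\bigr|$.

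The main obstacle is the middle step: verifying that $\phi_l,\psi_l^{e_l}$ (and likewise $\phi_u,\psi_u^{e_u}$) are genuine two-sided inverses, rather than merely landing in the advertised codomains as established in Lemmas~\ref{lem:invo1}--\ref{lem:invo4}. The delicate point is that $\phi_l$ may raise the lower inefficiency by more than one, which is exactly why its target is a union over $b > e_l$; consequently $\phi_l$ is \emph{not} injective on the union of all $\mathbf{S}(e_l,\cdot,e_u,\ell_u)$, and injectivity is recovered only after fixing the source value $e_l$, precisely the datum recorded by $\psi_l^{e_l}$. I would therefore check, by tracking the $\lsib$ counts already computed in the proofs of those lemmas, that $\psi_l^{e_l}\circ\phi_l$ and $\phi_l\circ\psi_l^{e_l}$ restore every relevant left-sibling count and hence act as the identity on the appropriate sets (and symmetrically in the upper case). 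Once this bookkeeping is complete, the telescoping above is purely formal.
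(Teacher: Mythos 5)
Your proposal is correct, and it is built from the same ingredients as the paper's own proof: the decomposition $r_{\mathbf{S}} = \sum (-1)^{\ell_l+\ell_u}\,|\mathbf{S}(e_l,\ell_l,e_u,\ell_u)|$ coming from Proposition~\ref{lem:IshContribution}, the maps $\phi_l,\psi_l^{e_l},\phi_u,\psi_u^{e_u}$ of Lemmas~\ref{lem:invo1}--\ref{lem:invo4}, and the two-sided composition identities. Where you diverge is only in how the cancellation is packaged. The paper never forms your cardinality recursions; instead it defines a single piecewise sign-reversing involution
\[
\omega_l(T) \;=\; \begin{cases}\phi_l(T) & E_l(T) = 0,\\ \psi_l^0(T) & E_l(T) \neq 0,\end{cases}
\]
on $\bigcup_{e_l+\ell_l\neq 0}\mathbf{S}(e_l,\ell_l,e_u,\ell_u)$ (and an analogous $\omega_u$ afterwards), so that all terms with $(e_l,\ell_l)\neq(0,0)$ cancel in pairs directly; note this uses only $\psi_l^0$, i.e.\ only the $i=0$ instance of the identities $\psi_l^{E_l(T)}\circ\phi_l(T)=T$ and $\phi_l\circ\psi_l^{i}(T)=T$ for $i<E_l(T)$. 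Your route instead fixes the source stratum $e_l$, obtains one bijection $\mathbf{S}(e_l,\ell_l,e_u,\ell_u)\leftrightarrow\bigcup_{b>e_l}\mathbf{S}(b,\ell_l-1,e_u,\ell_u)$ per value of $e_l$, and telescopes the resulting recursions algebraically --- a correct and slightly more informative packaging (the recursion is extra content), at the cost of invoking the inverse identities for every $e_l$ rather than just $e_l=0$ (though the paper states them in that generality anyway). Your observation that $\phi_l$ is not injective on the union over $e_l$ --- a tree with $E_l=b$ has the $b$ distinct preimages $\psi_l^0,\ldots,\psi_l^{b-1}$ applied to it --- is accurate, and it is precisely why the paper's involution applies $\phi_l$ only on the stratum $E_l=0$. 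Finally, the composition identities you flag as the main obstacle are exactly the step the paper itself leaves as ``straightforward to show,'' so your proposal sits at the same level of rigor as the published argument; that bookkeeping must be carried out in either version.
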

	\begin{proof}
	Let $\mathcal{T}_1^{(m)}(n) \subseteq \mathcal{T}^{(m)}(n)$ be the set of trees with nonzero contribution.
	We recall from Proposition \ref{lem:IshContribution} that
	$$r_\mathbf{S} = \sum_{T \in \mathcal{T}_1^{(m)}(n)}(-1)^{L_l(T) + L_u(T)} = \sum_{e_l+\ell_l+e_u+\ell_u<n}(-1)^{\ell_l+\ell_u}|\mathbf{S}(e_l,\ell_l,e_u,\ell_u)|.$$
	
    Now let $T \in \mathcal{T}_1^{(m)}(n)$. We note that if $L_l(T) \neq 0$, then $\phi_l(T)$ is defined. In this case, it is straightforward to show that
    $\psi_l^{E_l(T)}\circ\phi_l(T) = T$. Likewise, if $E_l(T) \neq 0$, then $\phi_l\circ\psi_l^{i}(T) = T$ for any $i \in \{0,\ldots,E_l(T)-1\}$. By replacing ``lower'' with ``upper'', the analogous statements hold for the compositions of $\phi_u$ and $\psi_u^{i}$.
    
    Now for any $e_u$ and $\ell_u$, we define a sign-reversing involution $\omega_l$ on
    $$\bigcup_{e_l,\ell_l,e_u,\ell_u:e_l + \ell_l \neq 0}\mathbf{S}(e_l,\ell_l,e_u,\ell_u)$$ by
    $$\omega_l(T) = \begin{cases}\phi_l(T) & E_l(T) = 0\\\psi_l^0(T) & E_l(T) \neq 0\end{cases}.$$
    We remark that this is indeed a sign-reversing operation since the maps $\phi_l$ and $\psi_l^0$ change the parameter $\ell_l$ by 1 and preserve the parameter $\ell_u$. Moreover, the function $\omega_l$ is an involution since for any $T$, the tree $\psi_l^0(T)$ contains no lower inefficient nodes.
    Thus we can reduce our equation to
    $$r_\mathbf{S} = \sum_{e_u + \ell_u < n} (-1)^{\ell_u}|\mathbf{S}(0,0,e_u,\ell_u)|.$$
    
    Now again, we define a sign-reversing involution $\omega_u$ on
    $$\bigcup_{e_u,\ell_u: e_u + \ell_u \neq 0}\mathbf{S}(0,0,e_u,\ell_u)$$ by
    $$\omega_u(T) = \begin{cases}\phi_u(T) & E_u(T) = 0\\\psi_u^0(T) & E_u(T) \neq 0\end{cases}.$$
	As before, this allows us to reduce equation to
    $r_\mathbf{S} = |\mathbf{S}(0,0,0,0)|.$
	\end{proof}
	
	\section{The Counting Formula for Nested Ish Arrangements}\label{sec:enumeration}
	
In this section, we show directly that for any nested Ish arrangement $\mathcal{A}_\mathbf{S}$, the number of trees in $\mathbf{S}(0,0,0,0)$ is equal to the known counting formula for number of regions of $\mathcal{A}_\mathbf{S}$ (Equation \ref{countIsh}).
	
	\begin{lem}\label{lem:countTrees}
	    Let $\mathcal{A}_\mathbf{S}$ be a nested Ish arrangement. Let $\mathfrak{T}(\mathbf{S})$ be the set of rooted labeled plane trees with $n$ nodes such that
	    \begin{enumerate}
	        \item The root is the node 1.
	        \item The node 1 has $2m+2$ children.
	        \item Every other node has one child.
	        \item For $k \neq 1$ a node, either $\lsib(k) \in S^-_{1,k}$ or $\rsib(k) \in S^-_{k,1}$.
	    \end{enumerate}
	    Then $|\mathfrak{T}(\mathbf{S})| = \prod_{k = 2}^n (n+1+|S_{1,k}| - k)$.
	\end{lem}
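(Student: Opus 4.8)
The plan is to count the trees in $\mathfrak{T}(\mathbf{S})$ via an insertion/deletion bijection that builds them one node at a time, peeling off the product $\prod_{k=2}^n(n+1+|S_{1,k}|-k)$ one factor per node. First I would record the structure forced by conditions (1)--(3): since node $1$ is the root with $2m+2$ children and every other node has exactly one child, the nodes $2,\ldots,n$ organize into vertical chains whose bottoms are the children of $1$ (and one checks there are exactly $2m+2$ leaves). The crucial consequence is that condition (4) is automatic for every node that is \emph{not} a child of $1$: such a node $w$ has $\lsib(w)=\rsib(w)=0$, and $0\in S_{1,w}^-\cap S_{w,1}^-$ because $0\in S_{1,w}$.

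Next I would prove the basic counting input: if a node $k$ sits at position $\ell\in\{0,1,\ldots,2m+1\}$ among the $2m+2$ children of $1$ (so $\lsib(k)=\ell$ and $\rsib(k)=2m+1-\ell$), then the number of positions $\ell$ satisfying condition (4) is exactly $|S_{1,k}^-|+|S_{k,1}^-|=|S_{1,k}|+1$. The point is that $S_{1,k}^-\subseteq\{0,\ldots,m\}$ forces $\ell\le m$ in the option $\lsib(k)\in S_{1,k}^-$, while $S_{k,1}^-\subseteq\{0,\ldots,m\}$ forces $\ell\ge m+1$ in the option $\rsib(k)\in S_{k,1}^-$; the two options thus occupy disjoint ranges of $\ell$ and contribute $|S_{1,k}^-|$ and $|S_{k,1}^-|$ respectively, and $|S_{1,k}^-|+|S_{k,1}^-|=|S_{1,k}|+1$ since $0\in S_{1,k}$.

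I would then set up the recursion. For $2\le k\le n+1$ let $\mathfrak{T}_k$ denote the trees on the label set $\{1\}\cup\{k,k+1,\ldots,n\}$ satisfying (1)--(4) with the same value of $m$, so that $\mathfrak{T}_2=\mathfrak{T}(\mathbf{S})$ and $\mathfrak{T}_{n+1}$ is the single tree consisting of node $1$ with $2m+2$ leaves. Deleting the smallest non-root node $k$ (contract its unique incident downward edge, attaching its child to its parent) defines a map $\mathfrak{T}_k\to\mathfrak{T}_{k+1}$, whose inverse inserts $k$ by subdividing an edge of a tree in $\mathfrak{T}_{k+1}$. Counting the edges available for reinsertion gives the factor: there are always exactly $2m+2$ edges out of node $1$ (at positions $0,\ldots,2m+1$), of which $|S_{1,k}|+1$ give a condition-(4)-valid position for $k$ by the previous paragraph, together with $n-k$ edges out of the non-root nodes (each such node has one child), every one of which is valid since the inserted $k$ then has $\lsib(k)=0$. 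Hence each fiber has exactly $(|S_{1,k}|+1)+(n-k)=n+1+|S_{1,k}|-k$ elements, independent of the target tree, and telescoping $|\mathfrak{T}_k|=(n+1+|S_{1,k}|-k)\,|\mathfrak{T}_{k+1}|$ yields $|\mathfrak{T}(\mathbf{S})|=\prod_{k=2}^n(n+1+|S_{1,k}|-k)$.

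The main obstacle I anticipate is verifying that deletion actually lands in $\mathfrak{T}_{k+1}$, i.e.\ that condition (4) is preserved. The only dangerous case is when $k$ is a child of $1$ at position $\ell$ and its child $c$ is a node: after contraction $c$ becomes the child of $1$ at position $\ell$, so I must ensure $\ell$ is a valid position for $c$ and not merely for $k$. This is exactly where the \emph{nested} hypothesis enters: since $c$ lies above $k$ in a chain we have $c>k$, hence $S_{1,k}\subseteq S_{1,c}$, and a short check shows $\ell\in S_{1,k}^-\Rightarrow\ell\in S_{1,c}^-$ and $2m+1-\ell\in S_{k,1}^-\Rightarrow 2m+1-\ell\in S_{c,1}^-$; thus validity of $\ell$ for $k$ forces validity for $c$. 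I would also confirm the routine points that insertion and deletion are mutually inverse and that, in the insertion direction, the displaced child's condition (4) is automatic because it acquires $\lsib=0$, after which the product formula follows.
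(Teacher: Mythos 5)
Your proposal is correct and is essentially the paper's own argument: the paper likewise peels off nodes $2,\ldots,n$ one at a time (smallest label first) via the same extraction/edge-subdivision operation, invokes the nested hypothesis at exactly the same point (the displaced child $c>k$ inherits the validity of its position), and counts the $(n-k)+|S_{1,k}^-|+|S_{k,1}^-|=n+1+|S_{1,k}|-k$ reinsertion choices per node. The only difference is presentational: the paper records those choices as an explicit bijection with a set of sequences $\mathfrak{S}(\mathbf{S})$, whereas you phrase it as a telescoping product of constant fiber sizes.
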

	
	\begin{proof}
	    Let $\mathfrak{S}(\mathbf{S})$ be the set of sequences $a_2,\ldots,a_n$ where $$a_k \in \mathfrak{A}_k:=\{(0,i)|i \in \{k+1,k+2,\ldots,n\}\} \cup \{(1,s)|s \in S_{1,k}^-\} \cup \{(-1,t)|t \in S_{k,1}^-\}$$
	    for all $k$.
	    Since $(-S_{1,k}^-)\cup S_{k,1}^- = S_{1,k}$ and $(-S_{1,k}^-) \cap S_{k,1}^- = \{0\}$, we observe that
	    $$|\mathfrak{S}(\mathbf{S})| = \prod_{k = 2}^n ((n-k)+|S_{1,k}^-| + |S_{k,1}^-|) = \prod_{k = 2}^n (n+1+|S_{1,k}| - k).$$
	We will construct a bijection between $\mathfrak{T}(\mathbf{S})$ and $\mathfrak{S}(\mathbf{S})$.
	
	For $k \in [n]$, consider the set of rooted labeled plane trees with $n-k+1$ nodes labeled $1,k+1,\ldots,n$. (Our convention when $k = n$ is that the single node is labeled 1.) Let $\mathfrak{T}_k(\mathbf{S})$ of such trees which satisfy:
	\begin{enumerate}
	    \item The root of the tree is 1.
	    \item The node 1 has $2m+2$ children.
	    \item Every other node has one child.
	    \item For $j > k$ a node, either $\lsib(j) \in S_{1,j}^-$ or $\rsib(j) \in S_{j,1}^-$.
	\end{enumerate}
	We note that $\mathfrak{T}_1(\mathbf{S}) = \mathfrak{T}(\mathbf{S})$. Now for any tree $T$ and any node $k$ in $T$ with a parent and exactly one child, we define \emph{extracting} $k$ as removing the node $k$ from the tree and drawing an edge between its neighbors.
	
	Now let $k > 1$, let $T \in \mathfrak{T}_{k-1}(\mathbf{S})$, and let $T'$ be the tree obtained by extracting the node $k$ from $T$. If $k$ is a child of 1, then we see that $\lsib_{T'}(\child(k)) = \lsib_{T}(k)$ (and hence $\rsib_{T'}(\child(k)) = \rsib_T(k)$). Moreover, for all other nodes $j > k$, we have that the number of left (and right) siblings of $j$ does not change between $T$ and $T'$. Thus since $\child(k) > k$ and $\mathcal{A}_\mathbf{S}$ is nested, $T' \in \mathfrak{T}_k(\mathbf{S})$.
	
    Now let $T \in \mathfrak{T}(\mathbf{S})$. We associate $T=:T_1$ to a sequence $a_2,\ldots,a_n$ in $\mathfrak{S}(\mathbf{S})$ as follows. For each $k \in \{2,3,\ldots,n\}$ in order:
	\begin{enumerate}
	    \item From the tree $T_{k-1}$, define
	    \begin{equation}\label{eqn:sequence}
	    a_k = \begin{cases}
	        (0,\parent(k)) & \text{if }\parent(k) \neq 1\\
	        (1,\lsib(k)) & \text{if }\parent(k) = 1\text{ and }\rsib(k) > m\\
	        (-1,\rsib(k)) & \text{if }\parent(k) = 1\text{ and }\lsib(k) > m
	    \end{cases}\end{equation}
	    \item Extract $k$ from $T_{k-1}$ to yield $T_k$.
	\end{enumerate}
		We note that the second and third cases are indeed mutually exclusive, so this map is well-defined by the previous paragraph. See Example \ref{ex:sequence} below.
	
	We next construct a map from $\mathfrak{S}(\mathbf{S})$ to $\mathfrak{T}(\mathbf{S})$. Let $a_2,a_3\ldots, a_n$ be a sequence in $\mathfrak{S}(\mathbf{S})$ and let $T_n$ be the tree consisting of a node labeled $1$ with $2m+2$ children (all of which are leaves). For each $k \in \{n,n-1, \ldots, 2\}$ in decreasing order, construct $T_{k-1}$ from $T_k$ as follows:
	\begin{enumerate}
	    \item If $a_k = (0,i)$, then $i > k$ and there is a node labeled $i$ in $T_k$. Replace the edge from $i$ to $\child(i)$ with edges from $i$ to a new node labeled $k$ and from $k$ to $\child(i)$.
	    \item If $a_k = (1,i)$, then let $c$ be the child of 1 in $T_k$ which has $i$ left siblings. Replace the edge from 1 to $c$ with edges from 1 to a new node labeled $k$ and from $k$ to $c$.
	    \item If $a_k = (-1,i)$, then let $c$ be the child of 1 in $T_k$ which has $i$ right siblings. Replace the edge from 1 to $c$ with edges from 1 to a new node labeled $k$ and from $k$ to $c$.
	\end{enumerate}
	Note that the final tree $T:=T_1$ is in $\mathfrak{T}(\mathbf{S})$.
	
	Now let $k \in [n]$ and let $T_{k} \in \mathfrak{T}_{k}(\mathbf{S})$ be arbitrary. If $k \neq 1$, let $a_k \in \mathfrak{A}_k$ be arbitrary and form a tree $T_{k-1}$ using the above construction. We then see that extracting $k$ from $T_{k-1}$ yields $T_k$ and that $a_k$ is precisely as described in Equation \ref{eqn:sequence}. Likewise, if $k \neq n$ extract $k+1$ from $T_k$ to form a new tree $T_{k+1}$ and define $a_{k+1}$ as in Equation \ref{eqn:sequence}. We then see that adding a node $k+1$ to $T_{k+1}$ using the above construction yields precisely $T_k$. We have thus given a bijection between $\mathfrak{T}(\mathbf{S})$ and $\mathfrak{S}(\mathbf{S})$.
	\end{proof}
	
\begin{ex}\label{ex:sequence}
	    Let $\mathcal{A}_\mathbf{S}$ be the nested Ish arrangement with $S_{1,j} = \{0,1,\ldots,j-2\}$ for $1 \neq j \in [6]$. Consider the tree $T \in \mathfrak{T}(\mathbf{S})$ on the right in Figure \ref{ex:lastBijection}. We see that 2 has more than $m = 4$ left siblings and 0 right siblings, meaning we associate to this tree $a_2 = (-1,0)$. Proceeding in this way, the sequence corresponding to this tree is given by:
	       $$
	        a_2 = (-1,0),\qquad
	        a_3 = (1,1),\qquad
	        a_4 = (0,6),$$
	        $$a_5 = (1,2),\qquad
	        a_6 = (-1,2).\qquad
	    $$
	\end{ex}
	
	We now recover the known counting result of \cite{AST} for the number of regions of a nested Ish arrangement.
    
	\begin{thm}[Theorem \ref{thmC}]\label{thm:Cayley}
	Let $\mathcal{A}_\mathbf{S}$ be a nested Ish arrangement. Then the number of regions of $\mathcal{A}_\mathbf{S}$ is given by
    $$r_\mathbf{S} = \prod_{k=2}^n (n+1+|S_{1,k}|-k).$$
    In particular, if $\mathcal{A}_\mathbf{S}$ is the ($n$-dimensional) Ish arrangement, then the number of regions is given by the Cayley formula: $r_\mathbf{S} = (n+1)^{n-1}$.
	\end{thm}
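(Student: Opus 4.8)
The plan is to chain together the two principal results already established for Ish-type arrangements. Since a nested Ish arrangement is in particular an Ish-type arrangement (Definition \ref{def:Ish}), Theorem \ref{thm:involution} applies and gives $r_\mathbf{S} = |\mathbf{S}(0,0,0,0)|$; that is, the number of regions equals the number of trees of nonzero contribution whose lower and upper 1-lengths and lower and upper inefficiencies all vanish. On the other side, Lemma \ref{lem:countTrees} evaluates $|\mathfrak{T}(\mathbf{S})| = \prod_{k=2}^n(n+1+|S_{1,k}|-k)$. Thus the theorem reduces to producing a bijection between $\mathbf{S}(0,0,0,0)$ and $\mathfrak{T}(\mathbf{S})$.

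To set up this bijection, I would first extract a clean structural description of $\mathbf{S}(0,0,0,0)$ from Proposition \ref{lem:IshContribution}. Setting $i = k = 0$ there forces the maximal $\mathbf{S}$-cadet sequence containing $1$ to be the singleton $\{1\}$ and, together with condition (1), forces every maximal $\mathbf{S}$-cadet sequence of $T$ to be a singleton. Unwinding the Ish-type relations $S_{i,j}^- = \{0\}$ for $i,j \neq 1$, this is equivalent to the following: for every node $u \neq 1$ whose cadet is also $\neq 1$, that cadet is the leftmost (and only node) child of $u$; moreover the cadet relations involving $1$ together with the vanishing of $E_l$ and $E_u$ impose exactly the efficiency inequalities $\lsib(w) \in S_{w,1}^-$ for node left-siblings $w$ of $1$ and $\lsib(w) \in S_{1,w}^-$ for non-cadet node-children $w$ of $1$. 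These are precisely the local conditions around $1$ that mirror condition (4) of $\mathfrak{T}(\mathbf{S})$.

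With this description in hand, I would build the bijection around the observation that the root $1$ of a tree in $\mathfrak{T}(\mathbf{S})$ has exactly $2m+2 = (m+1)+(m+1)$ children, which I would match against the two ``sides'' of $1$ inside $T$: its $m+1$ children (the upper side) and its $m+1$ lower slots, namely its $m$ siblings together with the branch hanging below it through its parent. Concretely, the node-children of $1$ in $T$ --- which by $E_u = 0$ and the cadet-of-$1$ relation satisfy $\lsib(w) \in S_{1,w}^-$ --- would become the children $k$ of $1$ with $\lsib(k) \in S_{1,k}^-$ in $\mathfrak{T}(\mathbf{S})$, while the siblings and parent-branch of $1$ --- which by $E_l = 0$ and the $\cadet(\cdot)=1$ relation satisfy $\lsib(w) \in S_{w,1}^-$ --- would become the children with $\rsib(k) \in S_{k,1}^-$. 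The forced left-spine cadet chains elsewhere in $T$ collapse to the single-child paths demanded in $\mathfrak{T}(\mathbf{S})$. I would then verify both that the image satisfies conditions (1)--(4) of $\mathfrak{T}(\mathbf{S})$ and that the construction is reversible, recovering the placement of $1$, its siblings, its parent-branch, and its children. Making this matching precise --- in particular getting the left-to-right orderings and the sibling counts to line up on both sides, and handling the degenerate case where $1$ is the root of $T$ --- is the step I expect to require the most care.

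Finally, the Cayley specialization is immediate: for the $n$-dimensional Ish arrangement one has $S_{1,k} = \{0,1,\ldots,k-1\}$, so $|S_{1,k}| = k$ and each factor equals $n+1+|S_{1,k}|-k = n+1$, whence $r_\mathbf{S} = \prod_{k=2}^n(n+1) = (n+1)^{n-1}$.
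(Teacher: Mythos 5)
Your proposal is correct and follows essentially the same route as the paper: reduce via Theorem \ref{thm:involution} and Lemma \ref{lem:countTrees} to exhibiting a bijection between $\mathbf{S}(0,0,0,0)$ and $\mathfrak{T}(\mathbf{S})$, then realize that bijection by re-rooting at the node $1$, sending its $m+1$ children and its $m$ siblings plus parent-branch to the $2m+2$ children of the root in $\mathfrak{T}(\mathbf{S})$ (with the sibling order reversed so that the $E_l=0$ conditions $\lsib(w)\in S_{w,1}^-$ become the $\rsib(k)\in S_{k,1}^-$ conditions), and collapsing the forced leftmost-child cadet chains to single-child paths. The paper's proof carries out exactly this construction (the maps $f$ and $g$, including the degenerate case where $1$ is already the root), so the bookkeeping you flag as needing care is precisely what the published argument supplies.
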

	\begin{proof}
	    By Theorem \ref{thm:involution} and Lemma \ref{lem:countTrees}, we need only show that there is a bijection between $\mathbf{S}(0,0,0,0)$ and $\mathfrak{T}(\mathbf{S})$. Informally, given a tree $T \in \mathbf{S}(0,0,0,0)$, we will construct a new tree as follows. If 1 is not the root of $T$, we first ``flip'' the edges pointing up from the parent of the node 1 so that they point down from 1. We then rotate everything below 1 counterclockwise until 1 is positioned as the root. See Figure~\ref{ex:lastBijection} for an example. If, on the other hand, the node 1 is already the root of $T$, we will simply add $m+1$ new leaves as the rightmost children of 1 and delete all of the children but the leftmost of every node.
	    
	    	    \begin{figure}
	    	    \centering
    	\begin{tikzpicture}
		\draw (0,0)--(0,1);
		\draw (0,1)--(-1.5,2.25);
		\draw (0,1)--(-0.75,2.25);
		\draw (0,1)--(0,2.25);
		\draw (0,1)--(0.75,2.25);
		\draw (0,1)--(1.5,2.25);
		\draw (0,2.25)--(-1.5,3.5);
		\draw (0,2.25)--(-0.75,3.5);
		\draw (0,2.25)--(0,3.5);
		\draw (0,2.25)--(0.75,3.5);
		\draw (0,2.25)--(1.5,3.5);
		\node[draw,circle,fill=white] at (0,0) {$4$};
	    \node[draw,circle,fill=white] at (0,1) {$6$};
	    \node[draw,circle,fill=white] at (0,2.25) {$1$};
	    \node[draw,circle,fill=white] at (-0.75,3.5) {$3$};
	    \node[draw,circle,fill=white] at (-1.5,2.25) {$2$};
	    \node[draw,circle,fill=white] at (0,3.5) {$5$};

         \node at (2.75,2)[anchor=center] {\Huge $\leftrightarrow$};
    \begin{scope}[shift={(7,0)}]
            \draw (0,0)--(-3,1.5);
            \draw (0,0)--(-2,1.5);
            \draw (0,0)--(-1,1.5);
            \draw (0,0)--(-0.5,1.5);
            \draw (0,0)--(-0.2,1.5);
            \draw (0,0)--(0.2,1.5);
            \draw (0,0)--(0.5,1.5);
            \draw (0,0)--(1,1.5);
            \draw (0,0)--(2,1.5);
            \draw (0,0)--(3,1.5);
            \draw (-2,1.5)--(-2,2.75);
            \draw (1,1.5)--(1,2.75);
            \draw (3,1.5)--(3,2.75);
            \draw (1,2.75)--(1,4);
            \draw (-1,1.5)--(-1,2.75);
            \node[draw,circle,fill=white] at (0,0) {$1$};
            \node[draw,circle,fill=white] at (-2,1.5) {$3$};
            \node[draw,circle,fill=white] at (1,1.5) {$6$};
            \node[draw,circle,fill=white] at (3,1.5) {$2$};
            \node[draw,circle,fill=white] at (1,2.75) {$4$};
            \node[draw,circle,fill=white] at (-1,1.5) {$5$};
        \end{scope}
	\end{tikzpicture}
	        \caption{An example of the bijection in Theorem \ref{thm:Cayley}, where $\mathcal{A}_\mathbf{S}$ is the nested Ish arrangement with $S_{1,j} = \{0,1,\ldots,j-2\}$ for $1 \neq j \in [6]$. Leaves which lie to the right of 6, above 3, or above 5 are omitted from the left picture for clarity. Visually, the forward direction of this bijection can be seen as flipping the edges pointing up from 6 (or more generally from the parent of 1) so that they point down from 1, then rotating everything below 1 counterclockwise until 1 is positioned as the root.}\label{ex:lastBijection}
	    \end{figure}
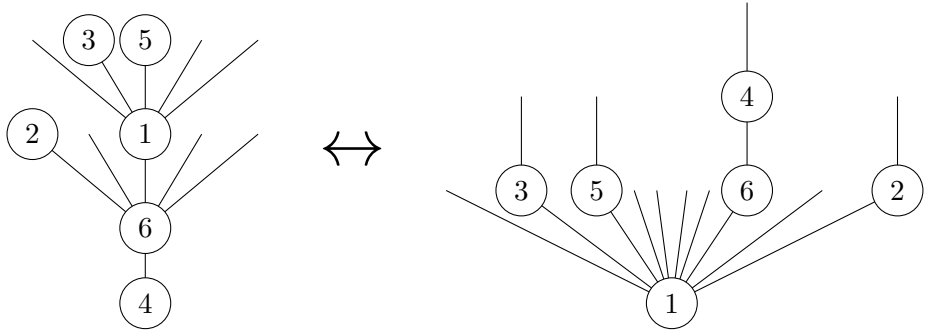
	    
	    We now rigorously define this bijection and its inverse. First let $T \in \mathbf{S}(0,0,0,0)$ and let $v$ be a node in $T$. We observe that if $v$ is not the root and is the cadet of its parent $u$, then Proposition \ref{lem:IshContribution} and the definitions of lower and upper 1-length imply that $\lsib_{T}(v) \in S_{u,v}^-$. This means that $\lsib_T(v)$ can only be nonzero if $v = 1$, $v$ is a child of 1, or $v$ is a left sibling of 1. Otherwise, either $v$ is the root or it is the leftmost child of its parent and its $m$ right siblings are all leaves.
	    
	    We now define a new tree $f(T)$ as follows. If 1 is the root of $T$, then
add $m+1$ leaves as the rightmost children of 1 and delete all of the children but the leftmost of every other node. (Note that these deleted children are all leaves.) Otherwise:
	    \begin{enumerate}
	        \item Delete all leaves other than those which are left siblings of some node, children of 1, or right siblings of 1.
	        \item Denote the left siblings of 1 by $s_1,s_2,\ldots,s_{\lsib(1)}$ and the right siblings of 1 (necessarily leaves) by $s_{\lsib(1)+2},\ldots,s_{m+1}$. 
	        \item Denote by $(v_1,\ldots,v_k,1)$ the cadet sequence from the root $v_1$ to 1. For each $j \in [k]$, delete all edges incident to $v_j$. Consider 1 as the new root.
	        \item Draw edges from 1 to $s_1,\ldots,s_{\lsib(1)},v_k,s_{\lsib(1)+2},\ldots,s_{m+1}$ so that $s_i$ has $i-1$ right siblings for each $i$ and $v_k$ has $\lsib(1)$ right siblings.
	        \item For each $j \in \{k,k-1,\ldots,2\}$ in order, draw an edge from $v_j$ to $v_{j-1}$.
	        \item Add a leaf to every node different from 1 that does not have a cadet in the new tree.
	    \end{enumerate}
	    By the preceding paragraph, we see that the tree $f(T)$ is indeed an element of $\mathfrak{T}(\mathbf{S})$.
	    
	    Now let $T \in \mathfrak{T}(\mathbf{S})$. We define a new tree $g(T)$ as follows. If there is no node amongst the $m+1$ rightmost children of $1$, delete the $m+1$ rightmost children of 1 and add leaves as right siblings of other nodes (and of any leaf which is the only child of its parent) so that every node has $m+1$ children. Otherwise,
	    \begin{enumerate}
	        \item Delete those children of nodes different from 1 that are leaves.
	        \item Denote by $s_1,\ldots,s_{m+1}$ the $m+1$ rightmost children of 1, indexed right to left. Let $s_k$ be the leftmost of these children which is a node.
	        \item Denote by $(s_k,v_1,\ldots,v_j)$ be the longest possible cadet sequence beginning with $s_k$. For $i \in [j]$, delete all edges incident to $v_i$. Consider $v_j$ as the new root.
	        \item For each $i \in \{j,j-1,\ldots,2\}$ in order, draw an edge from $v_i$ to $v_{i-1}$. Drawn an edge from $v_1$ to $s_k$.
	        \item Draw edges from $s_k$ to $s_1,\ldots,s_{k-1},1,s_{k+1},\ldots,s_{m+1}$ so that $s_i$ has $i-1$ left siblings for each $i$.
	        \item Add leaves as right siblings of nodes (and of any leaf which is the only child of its parent) as necessary so that every node has $m+1$ children.
	    \end{enumerate}
	We claim $g(T)$ is an element of $\mathbf{S}(0,0,0,0)$. We will prove this by showing every maximal $\mathbf{S}$-cadet sequence of $g(T)$ has size 1 and that $g(T)$ has no lower or upper inefficient nodes.
	
	Let $v$ be a node in $g(T)$ which has a cadet $u$. If $v = 1$, we see that $u$ is the rightmost node among the $m+1$ leftmost children of 1 in $T$ and that the left siblings of $u$ are the same in both $T$ and $g(T)$. This means $\lsib_{g(T)}(u) \in S_{1,u}^-$, so $\{1,u\}$ is not an $\mathbf{S}$-cadet sequence. Likewise, if $u = 1$, we see that $v$ is the leftmost node among the $m+1$ rightmost children of 1 in $T$, and that the left siblings of $v$ in $g(T)$ are precisely the right siblings of $v$ in $T$. This again means $\lsib_{g(T)}(1) \in S_{u,1}^-$, so $\{u,1\}$ is not an $\mathbf{S}$-cadet sequence. Finally, in the case that neither $v$ nor $u$ is the node 1, we see that either $\{u,v\}$ or $\{v,u\}$ is a cadet sequence of $T$. By construction, we then have that $\lsib_{g(T)}(u) = 0 \in S_{v,u}^-$. We conclude that every maximal $\mathbf{S}$-cadet sequence of $T$ has size 1, as claimed.
	
	Now let $u$ be a left sibling of 1 in $g(T)$. Then $u$ is one of the $m+1$ rightmost children of 1 in $T$. Moreover, the left siblings of $u$ in $g(T)$ are precisely the right siblings of $u$ in $g(T)$. This means $\lsib_{g(T)}(u) \in S_{u,1}^-$, so $u$ is not lower inefficient in $g(T)$. Likewise, let $v$ be a left sibling of the cadet of 1 in $g(T)$. Then $v$ is one of the $m+1$ leftmost children of 1 in $T$. Moreover, the left siblings of $v$ are the same in both $T$ and $g(T)$. This means $\lsib_{g(T)}(v) \in S_{1,v}^-$, so $v$ is not upper inefficient in $g(T)$. This proves the claim.
	
	It is straightforward to see that $f$ and $g$ are inverse to one another. Therefore we have a bijection between $\mathbf{S}(0,0,0,0)$ and $\mathfrak{T}(\mathbf{S})$, as desired.
	\end{proof}

\subsection*{Acknowledgements}
This work was completed as part of the MIT PRIMES (PRIMES-USA) 2020 program. The authors wish to thank the organizers of MIT PRIMES for their support and for making this collaboration possible. The authors also wish to thank Olivier Bernardi for supplying this problem and for useful guidance and suggestions. In addition, the authors are thankful to a pair of anonymous referees for offering many suggestions on how to improve this manuscript. In particular, the authors are appreciative of the referees' suggestions of references and exposition to add to the introduction. A portion of this work was completed while EH was a researcher at the Norwegian University of Science and Technology (NTNU). EH thanks NTNU for their support and hospitality.

\end{document}